\newcommand{\n}{\noindent}
\newcommand{\vp}{\varepsilon}
\newcommand{\bb}[1]{\mathbb{#1}}
\newcommand{\cl}[1]{\mathcal{#1}}
\newcommand{\ovl}{\overline}
\theoremstyle{plain}
\newtheorem{thm}{Theorem}[section]
\newtheorem{lem}[thm]{Lemma}
\newtheorem{pro}[thm]{Proposition}
\newtheorem{cor}[thm]{Corollary}
\theoremstyle{definition}
\newtheorem{dfn}[thm]{Definition}
\theoremstyle{remark}
\newtheorem{rem}[thm]{Remark}
\newtheorem*{rmk}{Remark}
\numberwithin{equation}{section}
\def\tilde{\widetilde}
\renewcommand{\tilde}{\widetilde}
\def\R{\bb R}
\def\Z{\bb Z}
\def\CC{\bb C}
\def\C{\bb C}
\def\E{\bb E}
\def\EE{\bb E}
\def\N{\bb N}
\def\P{\bb P}
\def\T{\bb T}
\def\hat{\widehat}
\def\n{\nolimits}
\def\d{\delta}
\begin{document}

 \title{
Spectral gap properties of the unitary groups:\\
around Rider's results on non-commutative Sidon sets.}

\author{by\\
Gilles  Pisier\\
Texas A\&M University and UPMC-Paris VI}

\maketitle
\begin{abstract}
We present a proof of Rider's unpublished result that the union of two Sidon sets in the dual of a non-commutative compact group is Sidon, and that randomly Sidon sets are Sidon. Most likely this proof is essentially the one announced by Rider and communicated in a letter to the author around 1979 (lost by him since then). The key fact is a spectral gap property with respect to certain representations of the unitary groups $U(n)$ that holds uniformly over $n$. The proof crucially uses Weyl's character formulae. We survey the results that we obtained 30 years ago using Rider's unpublished results. Using a recent different approach valid for certain orthonormal systems of matrix valued functions, we give a new proof of the spectral gap property that is required to show that the union of two Sidon sets is Sidon. The latter proof yields a rather good quantitative estimate. Several related results are discussed with possible applications to random matrix theory.
 \end{abstract}
 
 MSC: 43A46, 47A56, 22D10
 
 \def\tr{{\rm tr}}

\vfill\eject

\tableofcontents

\vfill\eject
 
A subset $\Lambda$  of a discrete Abelian group  $\hat G$
is called Sidon if every continuous function on $G$ with Fourier transform
supported in
$\Lambda$ has an absolutely convergent Fourier series.

The study of 
 Sidon sets in discrete Abelian groups was  actively developed
in the 1970's and  1980's, after
  Drury's remarkable proof of the stability of Sidon sets under finite unions (see \cite{LR}). 
  Rider  \cite{Ri} connected Sidon sets to random Fourier
series. 
This led the author to
a new characterization of Sidon sets as $\Lambda(p)$-sets
(in  Rudin's sense) with constants $O(\sqrt p)$
and eventually   to an arithmetic characterization of Sidon sets (see \cite{Pi,MaPi,Pis}).
Bourgain \cite{Bo} gave a different proof of this.
  The 2013 book \cite{GH} by Graham and Hare gives an account
  of this subject, updating  the  1975  one \cite{LR}  by Lopez and Ross. 
  See also \cite{LQ} for connections with Banach space theory.
   
   Throughout this, the main example always remains the integers
   $\hat G=\Z$
  (with $G=\T=\R/\Z$), and Sidon sets are defined by the properties
   of Fourier series on $\T$ with coefficients
supported in the set.
   The classical example of a Sidon set is
   a set     formed of a sequence $\{n(k)\}$
   such that $\inf n(k+1)/n(k)>1$ (such sets are called ``Hadamard lacunary").
   While the theory was initially inspired by this first example,
   much of it rests on another one, where $\T$ is replaced by $G=\T^\N$
   (or by $\{-1,1\}^\N$),
   and the fundamental Sidon set in its dual $\hat G$
   is the one formed by the coordinate functions on $G$. In particular,
   the connections with random Fourier series are closely related to this
   second example.
   
    Sidon sets are the analogue for discrete groups of 
  the so-called ``Helson sets"
  in continuous groups.
  The latter subject was  actively studied in the late 1960's  and 1970's notably
  by  Kahane and Varopoulos in Orsay,   K\"orner in Cambridge
  and many more (see \cite{Ka-,Ka,GMc}). Indeed,
   Sidon sets
  were then quite popular in harmonic analysis: in the Polish school following an old tradition (Banach, Kaczmarz, Steinhaus, Hartman,...), in the US after Hewitt and Ross,    
but also in the Italian (around Fig\`a-Talamanca) and 
    Australian schools (around Edwards and Gaudry).
    
   The harmonic analysis of thin sets was extended
   already in the late 1960's    to subsets of the dual ``object"
   $\hat G$ of any non-commutative
   compact group $G$, with Fourier series
   replaced by  the Peter-Weyl orthogonal development of functions on $G$.
   In this setting pioneering work was done by Fig\`a-Talamanca
   and Rider (\cite{FTR1,FTR2,FT}) 
   on generalized random Fourier series. There was initially
   a lot of excitement around the opening that non-commutative
   compact groups offered as a substitute for $\T$. However, the subject was given a cold shower when it was discovered (see \cite{Ri2,Ri3,Ce,Hut})
   that even for the simplest example  $G=SU(2)$
   infinite Sidon sets do not exist. Since finite Sidon sets 
   were considered trivial, this brought this whole direction to a full stop
   and probably gave  a bad reputation to Sidon sets in the duals of non-commutative
   compact groups. After that, many 
   in the next generation of researchers, in particular in the
   Polish school (Bo\.zejko, Pytlik, Szwarc,...) and
   the  Italian one (Fig\`a-Talamanca, Picardello...), turned to harmonic analysis on free groups
   (see e.g. \cite{FTN,FTP}). In this setting
   free sets, or ``almost free" sets, such as  the so-called Leinert sets (see e.g. \cite{Leh})
   or
   $L$-sets in the sense of
     \cite{Pajm}, can be viewed as analogous in some sense to Sidon sets
   in discrete non-commutative groups.

   This context probably explains why  Rider, when he published
   in \cite{Ri} his theorem connecting Sidon sets and random Fourier
   series decided not to include the details on
  the proof of the same result for  subsets of the duals of non-commutative
   compact groups. In the commutative case, full details
   could be   included without any special technical 
   difficulty because the key ingredient
   was a variant of Drury's 
interpolation trick (by then well known), invented to prove that the union of two
Sidon sets is Sidon,  and actually Rider's theorem
could be viewed as a generalization of Drury's union theorem. However, the extension of the latter to the non-commutative case was far from obvious
(see Remark \ref{diff}), and in fact it was still open until Rider's \cite{Ri}. Nevertheless, Rider chose
 to only announce  there  that he had settled it and promised to include the details, which involved a delicate estimate based on Weyl's character formula
for the unitary groups (see Theorem \ref{t1}), in a later publication,  but he never did.

In the late 1970's the author proved a series of results on Sidon sets
all based initially on Rider's breakthrough from \cite{Ri}.
It turned out that essentially all these results could be extended
for subsets of $\hat G$ when $G$ is a non-commutative compact group
\cite{MaPi,Pi2}. However, the latter extension required
the non-commutative unpublished version of Rider's \cite{Ri}.
At the author's request at the time, Rider kindly communicated to him
a detailed handwritten proof of his key result in the non-commutative  case.
Unfortunately, although a copy of this letter was kept for a long time,
it seems now to have been lost. Perhaps the successive moves
of the Jussieu Math. Inst. are an excuse, but the guilt is on the author.
   The more so since Daniel Rider passed away in 2008.
   
   The main goal of this paper is to present the details
   of a proof of Rider's Theorem for subsets of 
   $\hat G$ when $G$ is a general (a priori non-commutative) compact group.   
   Toward the end we give another proof, quite different, that we recently obtained
   in a more general framework not requiring any group structure.
   
   The main point of
   Rider's proof  is a spectral gap property of the family
   $\{U(n)\mid n\ge 1\}$ formed of \emph{all} the unitary groups.
   The property involves the embedding $U(n)\to U(2n)$
   obtained by adding 1's on the main diagonal, but
   the relevant estimate has to be uniform over $n$.
   We feel that this property is
   of independent interest, likely to find applications
   in random matrix theory, now that the latter field   has become
    part of the
   main stream (much more so now than 40 years ago !).
  
   This motivated us to  include the full
   details of (what most likely was) Rider's proof.
   We then describe in \S \ref{rrs} how Rider derived from
   his spectral gap result the stability of Sidon sets under finite unions
   and the fact the Sidon property is equivalent to a weaker one
   involving random Fourier series  that we name ``randomly Sidon".
   
   In \S \ref{gsr} we survey the non-commutative  results that we obtained
   in the 1980's using Rider's unpublished work.
   Actually we take special care
   and give detailed proofs because we detected some
   exagerated claims there (in \cite{Pi2}) that we no longer believe are true.
   See Remark \ref{err}.
 
 In \S \ref{bc}, we single out several natural inequalities for random unitaries,
 related to the classical ones of  Khintchine for random signs. We review what is known and discuss the problem of finding
  the best constants for these.

   We   seize this occasion to   try to revive a bit the whole subject
   of Sidon sets in duals of non-Abelian compact groups
   in the light of the recent surge of interest in random matrix theory
   and Voiculescu's free probability (see \cite{VDN}). Indeed, although 
   finite sets  $\Lambda \subset \hat G$ are a trivial example of Sidon set, in
   the non commutative setting   one is led to  consider
   sequences of 
   compact groups $(G_n)$ and 
   sequences of  subsets $\Lambda_n \subset \hat G_n$ with uniformly
   bounded Sidon constants. Then even if the cardinality 
   of the subsets $\Lambda_n$
   is uniformly bounded (and in fact even if it is equal to 1 !)
the notion is interesting. The simplest (and prototypical)
example of this situation with $|\Lambda_n |=1$ is the case
when $G_n=U(n)$ the group of unitary  $n\times n$-matrices,
and $\Lambda_n$ is the singleton formed of the 
irreducible representation (in short irrep) defining
$U(n)$ as acting on $\CC^n$. Sets of this kind and various generalizations
were tackled early on by Rider under the name ``local lacunary sets" (see \cite{Ri4}), but we suspect that this setting of sequences of groups,
with uniform estimates,
which is nowadays commonly accepted, 
was viewed as not so natural at the time.

We illustrate  this   in Theorem \ref{t4}. There we consider   a sequence of compact groups $G_n$
   and a sequence of unitary irreps $\pi_n \in \hat G_n$ with 
   unbounded dimensions,
   and we focus on the situation when the singletons
   $\{\pi_n\}$ have uniformly bounded Sidon constants. We give
   several equivalent characterizations of this situation, in terms
   of the character $t\mapsto \tr(\pi_n(t))$ of $\pi_n$.
   Surprisingly, 
   this becomes void if one uses a sequence of finite groups,
   or of groups that are amenable as discrete groups.
   In that case the dimensions must remain bounded. 
    E. Breuillard   opened our eyes to this
   phenomenon. We refer the reader to the forthcoming
   paper \cite{BreP} for more on this.

\section{Notation. Background. Spectral gaps}

Throughout this section, let $G$ be a compact group. 
We denote by $\hat G$ the dual object
formed as usual of all the (equivalence classes of)  irreducible representations (irreps in short) on $G$. We identify two irreps when they are unitarily equivalent.
We denote by $M(G)$ the space of Radon measures on $G$
equipped as usual with the total variation norm $\mu\mapsto \|\mu\|_{M(G)}= |\mu|(G)$.
\def\p{\cl P}
\def\g{\gamma}

We denote by $M_d$ the space of all complex matrices
of size $d\times d$ with the usual operator norm
as acting on $\ell_2^d$.

We denote by $U(d)\subset M_d$ the compact group formed of all unitary matrices
of size $d\times d$.

For any measure $\mu$ on   $G$ and any irrep $\pi:\ G \to U(d_\pi)$
 we define the Fourier transform  
 by \begin{equation}\label{phr1}\hat \mu (\pi)=\int \ovl{\pi(t)}  \mu (dt) \in M_{d_\pi}.\end{equation}
Note that $\forall \mu_1,\mu_2\in M(G)$
\begin{equation}\label{phr}\hat {\mu_1\ast \mu_2}(\rho)=\hat \mu_1(\rho)\hat \mu_2(\rho).\end{equation}

 We denote by  $m_G$ the normalized Haar measure 
 and by $t_G\in \hat G$ the trivial representation on $G$.

We denote   $L_p(G)=L_p(G,m_G)$. We view
$L_1(G)$ as isometrically embedded in 
${M(G)}$ via $f\mapsto f m_G$. In particular,
the Fourier transform of any $f\in L_1(G)$ is defined as
\begin{equation}\label{phr2}\hat f (\pi)=\int \ovl{\pi(t)}  f(t) m_G(dt).\end{equation}
For any $f\in L_2(G)$ we have (Parseval)
$$\|f\|_2=(\sum\nolimits_{\rho\in \hat G} d_\rho \tr|\hat f(\rho) |^2)^{1/2},$$
and the Fourier expansion of $f$ takes the form
$$f=\sum\nolimits_{\rho\in \hat G}  d_\rho \tr( {}^t\hat f(\rho) \rho) .$$
\begin{rmk} Note that our definitions of $\hat \mu$ and $\hat f$
in \eqref{phr1} and \eqref{phr2}  differ from that
of \cite{HR}, where $\hat \mu(\pi)$ is defined as   $\int \pi(t)^*\mu(dt)$ and similarly
for $\hat f$. Thus the Fourier coefficient in the sense of \cite{HR}
is the transpose of what it is in our sense. The advantage
is that we have \eqref{phr} while the convention of \cite{HR}
requires to reverse the order
of the factors on the right hand side of \eqref{phr}.
 \end{rmk} 
We denote by $\chi_\pi$ the character of $\pi$, i.e. we have
$\chi_\pi(x)=\tr(\pi(x))$ for any $x\in G$.\\
 A   measure
$\mu\in M(G)$ (resp. a function $f\in L_1(G)$) is called \emph{central}
if $$\forall g\in G\quad
\mu =\delta_g \ast \mu\ast \delta_{g^{-1}}
 $$
 (resp.  $f =\delta_g \ast f\ast \delta_{g^{-1}}$).
  Then the Fourier transform  
   $\hat \mu  $ (resp. $\hat f  $) is scalar valued, i.e. 
   $\hat \mu (\pi)$
   or $\hat f (\pi)$
    belong to  the space of 
   scalar multiples of the identity matrix of size $d_\pi$.\\
   Thus the subspace of
   the central functions in $L_p$ ($1\le p<\infty$) coincides with 
   the closed linear span of the characters $\{\chi_\pi\mid \pi\in \hat G\} $.
   
   There is a  bounded linear projection  $P$
   from $M(G)$ onto the subspace of 
   all central measures, defined simply by
    \begin{equation}\label{30} P(\mu)= \int \delta_g \ast f\ast \delta_{g^{-1}} m_G(dg).\end{equation}
Clearly $\|  P(\mu)\|\le \| \mu \|$.
  We denote by $A(G)$ the Banach space 
formed of those $f :\ G\to \CC$ such that
$\sum_{\pi\in \hat G} d_\pi \tr|\hat f(\pi)|<\infty$,
and we equip it with the norm
$$\|f\|_{A(G)}= \sum\nolimits_{\pi\in \hat G} d_\pi \tr|\hat f(\pi)|.$$
\begin{dfn}[Sidon sets] A subset $\Lambda\subset \hat G$
is called Sidon if there is a constant $C$ such that
$$\|f\|_{A(G)}\le C \|f\|_{C(G)}$$
for any $f\in C(G) $  with Fourier transform supported in $\Lambda$.
More explicitly, this means that for any finitely supported family
$(a_\pi)$ with $a_\pi\in M_{d_\pi} $ ($\pi\in \Lambda$) we have
$$\sum\nolimits_{\pi \in \Lambda} d_\pi \tr |a_\pi| \le C \|\sum\nolimits_{\pi \in \Lambda} d_\pi \tr(\pi a_\pi)\|_\infty.$$
\end{dfn}

For any pair $f,h\in L_2(G)$, the convolution $f\ast h$ belongs to $A(G)$
and  \begin{equation}\label{20}\|
f\ast h\|_{A(G)}\le \|
f \|_{L_2(G)} \|
h \|_{L_2(G)}.\end{equation} 
Moreover, we have for any $f\in A(G)$ and any $\nu \in M(G)$
\begin{equation}\label{25}
 \int f d\nu   =
 \sum\nolimits_{\pi\in \hat G} d_\pi\tr ( {}^t \hat f(\pi) \hat\nu(\bar\pi) )
 = \sum\nolimits_{\pi\in \hat G} d_\pi \sum\nolimits_{i,j\le d_\pi }  {  \hat f(\pi)}_{ij} {\hat\nu(\bar\pi)}_{ij} .
\end{equation}
and hence
 \begin{equation}\label{21}
|\int f(g ) \nu(dg)|\le  
  \|f \|_{A(G)} 
\| \sup\nolimits_{\pi\in \hat G} \| \hat \nu(\pi) \|
.\end{equation}
More generally,
  let  $f,h\in L_\infty(G; M_d)$ ($d\ge 1$). We define the convolution
  $F=f\ast h$ using the matrix product in $M_d$, so that
  $F_{ij}=\sum_k f_{ik}\ast h_{kj}$. 
  Let $x,y$ be in the unit ball of $\ell_2^d$.
  We have then
   \begin{equation}\label{22}
    \|\langle F x,y\rangle\|_{A(G)}\le \| f\|_{L_\infty(G; M_d)} \|h\|_{L_\infty(G; M_d)}
   .\end{equation}
Indeed, this follows easily from (here we use \eqref{20})
$$\|\langle F x,y\rangle\|_{A(G)}\le \sum\nolimits_k \| \sum\nolimits_i \bar x_i f_{ik}  \|_2
\| \sum\nolimits_j  y_j h_{kj}  \|_2
\le ( \sum\nolimits_k \| \sum\nolimits_i \bar x_i f_{ik}  \|_2^2)^{1/2} ( \sum\nolimits_k \|  \sum\nolimits_j  y_j h_{kj} \|_2^2)^{1/2}$$
$$   = \left( \int\sum\nolimits_k | \sum\nolimits_i \bar x_i f_{ik}  |_2^2dm_G\right)^{1/2} \left( \int\sum\nolimits_k |  \sum\nolimits_j  y_j h_{kj} |_2^2 dm_G\right)^{1/2}\le \| f\|_{L_2(G; M_d)} \|h\|_{L_2(G; M_d)}
.$$
A fortiori, we obtain by \eqref{21}
 \begin{equation}\label{23}
    |\int \langle F(g) x,y\rangle  \nu(dg)| \le \| f\|_{L_\infty(G; M_d)} \|h\|_{L_\infty(G; M_d)}  \sup\nolimits_{\pi\in \hat G} \| \hat \nu(\pi) \|
   .\end{equation}
Taking the sup over $x,y$, we find
 \begin{equation}\label{24}
    \|\int   F(g)  \nu(dg)\|_{M_d} \le \| f\|_{L_\infty(G; M_d)} \|h\|_{L_\infty(G; M_d)}  \sup\nolimits_{\pi\in \hat G} \| \hat \nu(\pi) \|
   .\end{equation}

\noindent{\bf Notation:} Let ${\cl G}=\prod_{\pi\in {\hat G}} U(d_\pi)$.
Let $u\mapsto u_\pi \in U(d_\pi)$ denote the coordinates on $ \cl G$.
\begin{dfn}[Randomly Sidon] A subset $\Lambda\subset \hat G$
is called randomly Sidon if there is a constant $C$ such that
 for any finitely supported family
$(a_\pi)$ with $a_\pi\in M_{d_\pi} $ ($\pi\in \Lambda$) we have
$$\sum\nolimits_{\pi \in \Lambda} d_\pi \tr |a_\pi| \le C \int \|\sum\nolimits_{\pi \in \Lambda} d_\pi \tr({u}_\pi \pi a_\pi)\|_\infty m_{{\cl G}}(d{u}).$$
\end{dfn}
Note that in Lemma \ref{68} we 
give a simple general argument showing that replacing the random unitaries 
$(u_\pi )$ by standard complex Gaussian random matrices
(with the usual normalization) leads to  the same notion of ``randomly Sidon".

Clearly Sidon implies randomly Sidon (with the same constant).

We denote by $\p(G)\subset M(G)$ the set of probability measures on $G$.\\
We say that  $\Lambda\subset \hat G$ is symmetric if 
$\bar \pi\in \Lambda$ for any $\pi\in \Lambda$.
\begin{dfn}[Spectral gap]\label{d1}
Let $0\le \gamma<\d\le 1$. We will say that a probability measure $\mu\in \p(G)$ has
a $(\d,\g)$-spectral gap with respect to a symmetric subset $\Lambda\subset \hat G$
if $\hat\mu (\pi)=  \d I$ for any $\pi\in \Lambda $ and
$  \|\hat\mu(\rho)\|\le \g$ for any nontrivial ${\rho \not\in \Lambda  }$.
\end{dfn}
\begin{rem}[Spectral gap as an inequality]
 Let $E\subset L_2(G)$  be the subspace formed
 of those $f\in  L_2(G)$ such that  
  $\hat f(\pi)=0$ for any non-trivial $\pi\not \in \Lambda$. Let
$P:\ L_2(G)\to E$ denote the orthogonal projection.
Note $Pf=\int fdm_G+ \sum\n_{\pi\in \Lambda} d_\pi \tr({}^t\hat f(\pi) \pi)$ for any $f\in L_2(G)$.
Let $P_\d f = \int fdm_G+ \d\sum\n_{\pi\in \Lambda} d_\pi \tr({}^t\hat f(\pi) \pi)$.
Then $\mu $ has
a $(\d,\g)$-spectral gap with respect to   $\Lambda $ iff
$$\forall f\in L_2(G)\quad\| \mu \ast f -P_\d f  \|_2 \le  \g \|f-Pf\|_2.$$

\end{rem}
\begin{dfn}[$(\d,\g)$-isolated]\label{d1bis}
We will say that $\Lambda\subset \hat G$ is
$(\d,\g)$-isolated if there is $\mu\in \p(G)$ that has 
a $(\d,\g)$-spectral gap with respect to  $\Lambda $.
\end{dfn}
\begin{rem} Using the central projection \eqref{30}
we may always assume in the preceding that $\mu$ is a central measure.
\end{rem}

The basic example is the set $\Lambda=\{-1,1\}\subset \Z$. The measure
$\mu=(1+\cos(t)) m_{\T}(dt)$ has a $(1/2,0)$-spectral gap with respect to  $\Lambda $.\\
On $G=\{-1,1\}$ the measure $\mu =(1+\xi) m_G$ does the same with respect to
the set formed of the character   $\xi\in \hat G$ associated to the identity   map.\\
More generally, Riesz products  give more sophisticated examples.
Let $G$ be  a compact Abelian group. Let
$\{\gamma_n\mid n\in \N\}\subset \hat G$ be ``quasi-independent", i.e. such that
 there is no nontrivial choice of $(\xi_n)\in \{-1,0,1\}^\N$ finitely supported such that
 $\prod {\gamma_n}^{\xi_n}=1$. 
 Assume $-1\le \d_n \le 1$.
 Then
 the probability measures $\nu_k= \prod_{n\le k} (1+\d_n \Re(\gamma_n)) m_G$
converge weakly when $k\to \infty$ to a probability $\nu$ on $G$.
We refer to $\nu$ as the Riesz product associated to 
$ \prod  (1+\d_n \Re(\gamma_n)) $.

If we assume that $\d_n=\d$ for all $n$ and $0<\d<1$, then 
the Riesz product $\nu$ has a $(\d,\d^2)$-spectral gap with respect to  $\Lambda= \{\gamma_n\}\cup\{ \bar \gamma_n\}$. 
For instance, this holds
for $G=\R/2\pi\Z$ when $\Lambda= \{\gamma_n\}$ 
is identified to the subset  $ \{2^n\}\subset \Z$
by  $\gamma_n(t)=\exp{(i 2^n t)}$.
This also holds for $G=\{-1,1\}^\N$ 
(resp. $G=\T^\N$ ) 
when $\Lambda\subset \hat G$ is the set $\{\xi_n\}$
(resp. $\{\xi_n\}\cup \{\bar \xi_n\}$ ) with $(\xi_n)$ denoting the coordinates on $G$.

Let $\sigma_n:\ U(n)\to M_n$ be the ``defining" irrep,  i.e. the identity map on 
$U(n)$.

\begin{lem}\label{l12} Let $n\ge 1$.   
For any $0<\d\le 1/(2n)$,
let $$\varphi^\d_{n}= 1+\d (\chi_{\sigma_n}+\ovl{ \chi_{\sigma_n} }) =1+\d  ({\tr(\sigma_n)+\ovl{\tr(\sigma_n)}} ).$$
Let $\nu^\d_{n} \in M(U(n))$ be the probability measure
defined by
$\nu^\d_{n} = \varphi^\d_{n} \ m_{U(n)}.$
Then $\nu^\d_{n}$ has a $(\d/n,0)$-spectral gap with respect to $ \{ \sigma_n,\ovl{\sigma_n} \}.$
\end{lem}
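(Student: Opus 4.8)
\emph{Plan.} I would verify the two halves of the statement separately: first that $\nu^\d_n$ is a genuine probability measure, and then that its Fourier transform is supported on $\{t_{U(n)},\sigma_n,\ovl{\sigma_n}\}$ with exactly the claimed values. The second part is a direct consequence of the Peter--Weyl (Schur) orthogonality relations for the matrix coefficients of irreps of $U(n)$.

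For the first part, note that every $u\in U(n)$ has $n$ eigenvalues of modulus $1$, so $|\chi_{\sigma_n}(u)|=|\tr(u)|\le n$ and hence $|\d(\chi_{\sigma_n}+\ovl{\chi_{\sigma_n}})|\le 2\d n\le 1$ under the standing hypothesis $\d\le 1/(2n)$; thus $\varphi^\d_n\ge 0$ pointwise. Since $\sigma_n$ is a nontrivial irrep of $U(n)$ we have $\int\chi_{\sigma_n}\,dm_{U(n)}=0$, so $\int\varphi^\d_n\,dm_{U(n)}=1$ and $\nu^\d_n\in\p(U(n))$.

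For the second part, fix an irrep $\rho$ of $U(n)$. Using \eqref{phr1} and $\ovl{\chi_{\sigma_n}}=\chi_{\ovl{\sigma_n}}$,
$$\hat{\nu^\d_n}(\rho)=\int\ovl{\rho}\,dm_{U(n)}+\d\int\ovl{\rho}\,\chi_{\sigma_n}\,dm_{U(n)}+\d\int\ovl{\rho}\,\chi_{\ovl{\sigma_n}}\,dm_{U(n)}.$$
The first integral equals $I$ when $\rho=t_{U(n)}$ and $0$ when $\rho$ is nontrivial. For the other two, write $\chi_\pi=\sum_k\pi_{kk}$ and recall that $\int\ovl{\rho_{ab}}\,\pi_{kk}\,dm_{U(n)}$ vanishes whenever $\rho\not\cong\pi$, while, taking $\pi$ as the representative of $\rho$ when $\rho\cong\pi$, it equals $1/d_\pi$ if $a=b=k$ and $0$ otherwise; summing over $k$ gives $\int\ovl{\rho}\,\chi_\pi\,dm_{U(n)}=(1/d_\pi)I$ if $\rho\cong\pi$ and $0$ otherwise. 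Applying this with $\pi=\sigma_n$ and with $\pi=\ovl{\sigma_n}$, both of dimension $n$, we conclude that for every nontrivial $\rho$ one has $\hat{\nu^\d_n}(\rho)=(\d/n)I$ when $\rho$ is $\sigma_n$ or $\ovl{\sigma_n}$, and $\hat{\nu^\d_n}(\rho)=0$ otherwise.

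Finally I would observe that $\sigma_n$ and $\ovl{\sigma_n}$ are two distinct nontrivial irreps — they are inequivalent because their characters $\chi_{\sigma_n}$ and $\ovl{\chi_{\sigma_n}}$ are different functions on $U(n)$ — that $\{\sigma_n,\ovl{\sigma_n}\}$ is symmetric, and that $0\le 0<\d/n\le 1/(2n^2)\le 1$, so that all the requirements of Definition \ref{d1} are met with the ``$\d$'' there equal to $\d/n$ and the ``$\g$'' there equal to $0$. This yields the lemma. I do not expect any real obstacle: the argument is essentially one application of Schur orthogonality. The only points needing a little care are the elementary estimate $|\tr(u)|\le n$ on $U(n)$ (which forces both the hypothesis $\d\le 1/(2n)$ and the resulting small gap $\d/n$), keeping track of the ``transpose'' convention \eqref{phr1} for $\hat\mu$, and checking that $\sigma_n\not\cong\ovl{\sigma_n}$ so that $\Lambda$ genuinely has two elements.
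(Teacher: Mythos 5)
Your proof is correct and follows the same approach as the paper, which simply asserts the Fourier coefficient values as ``obvious'' from orthogonality; you have spelled out the Schur orthogonality computation, the positivity check that makes $\nu^\d_n$ a probability measure, and the inequivalence of $\sigma_n$ and $\ovl{\sigma_n}$ (which indeed matters, since otherwise the coefficient would be $2\d/n$). No gaps.
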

\begin{proof} Obviously $\hat{ \varphi^\d_{n}}(\sigma_n)= \hat{ \varphi^\d_{n}}(\ovl{\sigma_n}) =\d/n$ and $\hat{ \varphi^\d_{n}}(\pi)=0$ for any other nontrivial irrep $\pi$.
\end{proof}

\begin{dfn}[peak sets]\label{d2}
 Let $0<\vp<1$.
We say that  $\Lambda\subset \hat G$ is an  $\vp$-peak set
with constant $w$
if there is $\nu\in M(G)$ with $\|\nu\|_{M(G)}\le w$ such that
 $\hat\nu (\pi)=   I$ for any $\pi\in \Lambda$ and
$\sup\nolimits_{\rho \not\in \Lambda} \|\hat\nu(\rho)\|\le \vp$.
\end{dfn}
\begin{rem}\label{r10}
If $\nu$ is as in Definition \ref{d2} for some  $0<\vp<1$
then $\nu^{\ast k}$ satisfies the same with $\vp^k,w^k$ in place of
$ \vp,w$. Therefore, if $\Lambda$ is an $\vp$-peak set for some 
$0<\vp<1$, then it is so for all $0<\vp<1$.
\end{rem}
\begin{dfn}[peaking Sidon sets]\label{d33}  
We say that a Sidon set $\Lambda\subset \hat G$ is   peaking if
 for any  $0<\vp<1$ and any ${u}\in \cl G$
 (or merely for any $u\in \prod\n_{\pi\in \Lambda} U(d_\pi)$) 
  there is a measure $\mu_\vp^{u}\in M(G)$
  such that 
  $$ \hat {\mu_\vp^{u}}(\pi)={u}_\pi \ \forall \pi \in \Lambda,\quad \sup_{\pi\not\in \Lambda}\|\hat   {\mu_\vp^{u}}(\pi)\|\le \vp  \text{  and }
  \|\hat {\mu_\vp^{u}} \|\le w(\vp)$$
  where $w(\vp)$ depends only on $\vp$.
 \end{dfn}
\begin{rem}[The main difficulty of the non-Abelian case]\label{diff} Note that one of our main goals will be to prove that actually
any Sidon set is peaking. This will be reached
in Theorem \ref{t2} and Remark \ref{phr5}.
Once this goal is attained, it follows as an easy  
corollary that the union of two Sidon sets is also one (see Corollary \ref{c2}).
In the Abelian case, Drury's (or Rider's) proof
made crucial use of the 
Riesz product $\prod (1+\d  (z_n+\bar z_n)/2 )$
on $\T^{\N}$ ($0\le \d<1$). With the notation in Lemma \ref{l12}
this is the same 
as  the infinite product of the probability  
$\nu^\d_{1}$ on $\T$. The latter has a $(\d,\d^2)$-spectral gap
with respect to the Sidon set  formed of the coordinates on $\T^{\N}$, which is the
fundamental example in the Abelian case. 
The proof that Sidon sets are peaking uses a certain
transplantation trick due to Drury to pass from the fundamental example to the general case. It is not really difficult to adapt that trick
to the non-Abelian case (see the proof of Theorem  \ref{t2}).
However, in the non-Abelian case the fundamental example
is the product $\prod_{n\ge 1} U(n)$ but the product
of the probabilities $\nu^\d_{n}$ \emph{fails} to have the required spectral gap, whence
  the need for a substitute for the Riesz product. This is precisely
 the role of  Theorem \ref{t1} in the next section.
\end{rem}
The preceding definitions are connected by the following simple result.
\begin{pro}\label{p1}  Let $0<\gamma<\d<1$.
Any $(\d,\g)$-isolated symmetric set $\Lambda\subset \hat G$ is an $\vp$-peak set
with constant $w$ for some $0<\vp<1$ and $w\ge 0$ depending only on
$\gamma,\d$.\\
Any Sidon set $\Lambda\subset \hat G$ that is also an $\vp$-peak set
with constant $w$ for some $0<\vp<1$ and $w\ge 0$ is peaking.
\end{pro}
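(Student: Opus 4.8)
For the first assertion, the plan is to start from a measure $\mu\in\p(G)$ that has a $(\d,\g)$-spectral gap with respect to $\Lambda$, and manufacture from it a measure $\nu$ whose Fourier transform is \emph{exactly} $I$ on $\Lambda$ while staying small off $\Lambda$. The natural device is to iterate and renormalize: since $\hat\mu(\pi)=\d I$ on $\Lambda$ and $\|\hat\mu(\rho)\|\le\g$ for nontrivial $\rho\notin\Lambda$ (and $\hat\mu(t_G)=1$), the convolution powers satisfy $\hat{\mu^{\ast k}}(\pi)=\d^k I$ on $\Lambda$ and $\|\hat{\mu^{\ast k}}(\rho)\|\le\g^k$ off $\Lambda$. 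We cannot simply divide by $\d^k$ inside the group algebra, so instead I would form a suitable linear combination / series. Concretely, consider the signed measure
\begin{equation}\label{eq:nu-construction}
\nu=\sum_{k\ge 1} c_k\,\mu^{\ast k},
\end{equation}
with scalars $c_k$ chosen so that $\sum_k c_k\d^k=1$ (forcing $\hat\nu=I$ on $\Lambda$) while $\sum_k|c_k|\g^k<1$ (giving the peak bound off $\Lambda$) and $\sum_k|c_k|<\infty$ (giving $\nu\in M(G)$ with $\|\nu\|_{M(G)}\le\sum_k|c_k|=:w$). A clean choice: since $\gamma<\delta$, take a single term $c_1=\d^{-1}$ when that already works, i.e. $\nu=\d^{-1}\mu$, for which $\hat\nu=I$ on $\Lambda$, $\|\hat\nu(\rho)\|\le\g/\d<1$ off $\Lambda$, and $\|\nu\|_{M(G)}=\d^{-1}$. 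Thus $\Lambda$ is an $\vp$-peak set with $\vp=\g/\d$ and $w=1/\d$, both depending only on $\g,\d$. By Remark \ref{r10} one may then replace $\vp$ by any value in $(0,1)$ at the cost of raising $w$ to a power, so the statement holds for all $0<\vp<1$.

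For the second assertion, suppose $\Lambda$ is Sidon with constant $C$ and also an $\vp$-peak set (for every $0<\vp<1$, by Remark \ref{r10}), witnessed by $\nu_\vp\in M(G)$ with $\hat{\nu_\vp}(\pi)=I$ on $\Lambda$, $\sup_{\rho\notin\Lambda}\|\hat{\nu_\vp}(\rho)\|\le\vp$, $\|\nu_\vp\|\le w(\vp)$. Given $u\in\cl G$ (or just in $\prod_{\pi\in\Lambda}U(d_\pi)$) and a target accuracy $\vp$, I want a measure $\mu_\vp^u$ with $\hat{\mu_\vp^u}(\pi)=u_\pi$ on $\Lambda$ and small norm off $\Lambda$. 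The idea is an iterative correction (a Cauchy-type / Neumann-series scheme): first use the Sidon property to find, for any prescribed finitely-supported target on $\Lambda$, a function $f$ with $\hat f(\pi)=$ the target on $\Lambda$ and controlled sup norm — more precisely Sidonicity lets one realize a given matrix family $(u_\pi)$ on $\Lambda$ by a \emph{bounded} function (a standard consequence: the map $(a_\pi)\mapsto\sum d_\pi\tr(\pi a_\pi)$ is an isomorphism onto its range, so the dual/pre-dual gives a bounded extension operator $C(G)$-valued, with norm $\le C$). Then convolving such a correction with the peak measure $\nu_\vp$ damps the unwanted coefficients off $\Lambda$ by a factor $\le\vp$ without disturbing the values on $\Lambda$; repeating with geometrically smaller corrections and summing yields the desired $\mu_\vp^u$ with $\|\hat{\mu_\vp^u}\|\le w'(\vp)$ depending only on $\vp$ (and on $C$, which is fixed for $\Lambda$).

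The main obstacle is making this iteration converge in the right norm. After one correction-plus-peaking step the error off $\Lambda$ is governed by the product of (i) the $C(G)$-norm bound $C$ coming from Sidonicity, (ii) the peak-measure mass $w(\vp)$, and (iii) the damping factor $\vp$; one needs the composite contraction factor $C\,w(\vp)\,\vp$ (or an analogue, once one is careful about which norms measure the error — $\sup_\rho\|\hat{(\cdot)}(\rho)\|$ off $\Lambda$ versus $M(G)$-norm, which are \emph{not} comparable in general) to be $<1$. Since $w(\vp)$ may blow up as $\vp\to 0$ (it is $w$ raised to a power $\asymp\log(1/\vp)$), one must first fix $\vp_0$ small enough that $C\,w(\vp_0)\,\vp_0<1$, run the geometric iteration to get $\mu^u$ realizing $u$ on $\Lambda$ with \emph{some} fixed error off $\Lambda$, and then compose once more with $\nu_\vp$ to push that error below the prescribed $\vp$; the bookkeeping of constants (ensuring $w(\vp)$ in the conclusion depends only on $\vp$, not on $u$) is the delicate point, and this is exactly the Drury-style interpolation argument alluded to in Remark \ref{diff}, so I expect the full details to mirror the proof of Theorem \ref{t2}.
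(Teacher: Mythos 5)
Your proposed ``clean choice'' $\nu=\d^{-1}\mu$ for the first assertion has a concrete error. The trivial representation $t_G$ cannot lie in $\Lambda$ (otherwise $\hat\mu(t_G)=\mu(G)=1\ne\d$), so the peak-set condition $\sup_{\rho\notin\Lambda}\|\hat\nu(\rho)\|\le\vp$ must also hold at $\rho=t_G$; but $\hat\nu(t_G)=\d^{-1}\cdot 1=\d^{-1}>1$, so $\nu=\d^{-1}\mu$ is \emph{not} a peak measure. The same defect is built into your general ansatz $\nu=\sum_k c_k\,\mu^{\ast k}$: you imposed no constraint at $t_G$, yet $\hat\nu(t_G)=\sum_k c_k$, and if the $c_k$ are nonnegative then $\sum_k c_k\d^k=1$ together with $\d<1$ forces $\sum_k c_k\ge\d^{-1}>1$. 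The fix is to subtract the Haar component before renormalizing: the paper sets $\nu=\d^{-1}(\mu-m_G)$, which kills the trivial coefficient ($\hat\nu(t_G)=0$), gives $\hat\nu=I$ on $\Lambda$ and $\|\hat\nu(\rho)\|\le\g/\d<1$ for nontrivial $\rho\notin\Lambda$, with $\|\nu\|_{M(G)}\le\d^{-1}(\|\mu\|_{M(G)}+1)$.

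For the second assertion you are over-engineering; no iteration, Neumann series, or convergence bookkeeping is needed, and the worry about whether the ``composite contraction factor'' $C\,w(\vp)\,\vp$ is $<1$ is an artifact of a scheme the paper never uses. The argument is a single convolution. By Lemma \ref{l3}(i) (Hahn--Banach applied to the Sidon property) there is $\mu^u\in M(G)$ with $\|\mu^u\|_{M(G)}\le C$ and $\hat{\mu^u}(\pi)=u_\pi$ for $\pi\in\Lambda$. Set $\mu^u_\vp=\mu^u\ast\nu$ where $\nu$ is the peak measure. Then $\hat{\mu^u_\vp}(\pi)=\hat{\mu^u}(\pi)\hat\nu(\pi)=u_\pi$ on $\Lambda$ \emph{exactly} (there is no residual error to iterate away, since $\hat\nu=I$ there), $\|\hat{\mu^u_\vp}(\rho)\|\le C\vp$ off $\Lambda$, and $\|\mu^u_\vp\|_{M(G)}\le Cw$. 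This is the peaking property with parameter $C\vp$ in place of $\vp$, and replacing $\nu$ by a convolution power $\nu^{\ast k}$ (Remark \ref{r10}) pushes $C\vp^k$ below any prescribed tolerance while keeping the norm bound $Cw^k$ depending only on that tolerance. The Drury-style interpolation and the machinery of Theorem \ref{t2} that you invoke are needed only when one starts from a \emph{randomly} Sidon set and must first build the peak measure; once $\Lambda$ is Sidon and already a peak set, the proof is the two lines above.
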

\begin{proof} Let $\mu$ be as in Definition \ref{d1}. Let
$\nu= \d^{-1} (\mu-m_G)$ with $\vp=\g/\d$ and $w=d^{-1} (\|\mu\|+1)$. Then 
 $\nu$ satisfies the property in Definition \ref{d2}.
 If $\Lambda$ is Sidon with constant $C$, by (i) in Lemma \ref{l3} (Hahn-Banach), for
 any ${u}\in \cl G$
  there is a measure $\mu^{u}\in M(G)$
  such that 
  $$ \hat {\mu^{u}}(\pi)={u}_\pi \ \forall \pi \in \Lambda   \text{  and }
  \|\hat {\mu ^{u}} \|\le C.$$
  Let $\nu$ be as in Definition \ref{d2}. Then
   ${\mu_\vp^{u}}=  {\mu^{u}}\ast \nu$ is as in Definition \ref{d33}
   with $w(\vp)=Cw$.
   This gives the announced result for  some $0<\vp<1$,
   but replacing $\nu$ by its convolution powers we obtain a similar
   result for any  $0<\vp<1$.
   \end{proof}
   \begin{pro}\label{p2} 
 Let $G=\prod_{n\in \N} G_n$ be the product of a sequence of compact groups,
 let $(\mu_n)$ be a sequence with $ \mu_n\in \p(G_n)$ and
 let $(\Lambda_n) $ be a sequence of symmetric subsets with
  $\Lambda_n\subset \hat{G_n}$ for each $n$.
  Let $0<\gamma<\d<1$. Let $\gamma'=\max\{\gamma,\d^2\}<\d$.
  If $\mu_n$ has
a $(\d,\g)$-spectral gap with respect to $\Lambda_n $ for each $n$,
then the product $\mu=\otimes_{n\in \N} \mu_n$
has a $(\d,\g')$-spectral gap with respect to the subset
$\Lambda\subset \hat{G}$, 
denoted by $\dot\Sigma \Lambda_n$, 
consisting  of all the irreps $\pi$ on  $G$
of the following form:
  for some $n$ there is $\pi_n\in \Lambda_n$ such that
$$\forall x= (x_n)\in G\quad \pi(x)= \pi_n(x_n).$$
 \end{pro}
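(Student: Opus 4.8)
The plan is to reduce the statement to a direct computation of $\hat\mu$ on each irrep of $G$, exploiting the tensor structure of the dual of a product group. First I would recall the standard fact that every irrep $\pi$ of $G=\prod_n G_n$ factors through a finite subproduct: one can write $\pi=\bigotimes_{n}\pi_n$ with $\pi_n\in\hat{G_n}$ and all but finitely many $\pi_n$ equal to the trivial representation $t_{G_n}$; let $S=S(\pi)\subset\N$ be the finite set of indices on which $\pi_n$ is nontrivial, so that $\pi$ is trivial exactly when $S=\emptyset$. Applying \eqref{phr1} and Fubini to the product probability $\mu=\bigotimes_n\mu_n$ gives $\hat\mu(\pi)=\bigotimes_n\hat{\mu_n}(\pi_n)$; since $\mu_n\in\p(G_n)$ forces $\hat{\mu_n}(t_{G_n})=\mu_n(G_n)=1$, the trivial factors are harmless and $\hat\mu(\pi)=\bigotimes_{n\in S}\hat{\mu_n}(\pi_n)$, whence $\|\hat\mu(\pi)\|=\prod_{n\in S}\|\hat{\mu_n}(\pi_n)\|$ for the operator norm. (In particular $\mu$ is a well-defined Radon probability on the compact group $G$, and $\hat\mu(\pi)$ depends only on the finitely many coordinates on which $\pi$ acts nontrivially.)

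Next I would read off the two requirements of Definition \ref{d1}. By the very definition of $\dot\Sigma\Lambda_n$, and using that $\d<1$ forces $t_{G_n}\notin\Lambda_n$ for each $n$, one has $\pi\in\Lambda$ if and only if $S(\pi)=\{m\}$ is a singleton and $\pi_m\in\Lambda_m$; in that case $\hat\mu(\pi)=\hat{\mu_m}(\pi_m)=\d I$. Since each $\Lambda_n$ is symmetric and $\bar\pi$ then corresponds to the singleton $\{m\}$ with $\bar{\pi_m}\in\Lambda_m$, the set $\Lambda$ is symmetric, so Definition \ref{d1} indeed applies. For the decay bound take $\pi$ nontrivial with $\pi\notin\Lambda$, so $S=S(\pi)\ne\emptyset$. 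For every $n\in S$ the representation $\pi_n$ is nontrivial, hence the hypothesis on $\mu_n$ gives $\|\hat{\mu_n}(\pi_n)\|=\d$ if $\pi_n\in\Lambda_n$ and $\|\hat{\mu_n}(\pi_n)\|\le\g$ otherwise; in all cases $\|\hat{\mu_n}(\pi_n)\|\le\d$. If $|S|\ge2$ then $\|\hat\mu(\pi)\|=\prod_{n\in S}\|\hat{\mu_n}(\pi_n)\|\le\d^2$; if $S=\{m\}$ is a singleton then $\pi\notin\Lambda$ forces $\pi_m\notin\Lambda_m$, so $\|\hat\mu(\pi)\|=\|\hat{\mu_m}(\pi_m)\|\le\g$. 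Either way $\|\hat\mu(\pi)\|\le\max\{\g,\d^2\}=\g'$, which is precisely the $(\d,\g')$-spectral gap for $\Lambda$.

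I do not expect a genuine obstacle: once the tensor description of $\hat G$ is in place the argument is pure bookkeeping, the key quantitative point being that two or more nontrivial factors automatically produce a contribution $\le\d^2$. The only mild subtlety worth spelling out is the infinite product — existence of $\mu=\bigotimes_n\mu_n$ as a Radon probability on $G$ and the reduction to finitely many coordinates — and this is disposed of by the observation $\hat{\mu_n}(t_{G_n})=1$.
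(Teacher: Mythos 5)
Your proof is correct and follows essentially the same route as the paper's: identify irreps of the product as tensor products with finitely many nontrivial factors, observe that $\hat\mu(\pi)$ is the corresponding product of $\hat{\mu_n}(\pi_n)$, and case-split on whether the nontrivial support has size one or at least two to obtain the bound $\max\{\g,\d^2\}$. Your version merely makes explicit a few points the paper leaves tacit (the multiplicativity of the operator norm under the tensor decomposition, and the fact that $\d<1$ excludes trivial representations from $\Lambda_n$).
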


\begin{proof} Let $\pi \in \dot\Sigma \Lambda_n$.
Then $\hat \mu (\pi)= \hat {\mu_n} (\pi_n)$. Any nontrivial $\pi\in \hat G$
is of the form $\pi(x)=\otimes_{n\in \N} \pi_n(x_n)$ for
some sequence $(\pi_n)$ with $\pi_n\in \hat{G_n}$ containing some but only finitely many nontrivial terms.
If   at least one of these non trivial terms $\pi_n$
is not in $ \Lambda_n$, then $\| \hat \mu (\pi)\|\le \g$.
If they are all in $ \Lambda_n$ and $\pi \not\in \Lambda$,
there must be at least two of them and then $\| \hat \mu (\pi)\|\le \d^2$.
The result is then immediate.
\end{proof}
\begin{rem}\label{rl12} Let $G_k=U(d_k)$ and $G=\prod G_k$. Assume
that $N=\sup_k d_k <\infty$. Let $0<\d\le 1/(2N)$.
Let $\varphi_n\in L_1(G)$ be defined for $x=(x_k)\in G$ by
$\varphi_n(x)=\prod_{k\le n} (1+\d(\tr ( {x_k} ) +\tr (\ovl{ { x_k}} ))$, and let
  $\nu_n=\varphi_n m_G$.
As for Riesz products, $\nu_n\in {\cl P}(G)$, $\nu_n$ converges weakly to
some $\nu\in {\cl P}(G)$, and it is easy to check, similarly,
that $\nu$ has a $(\d/N, \d^2/N^2 )$-spectral gap. This can also be seen
as a particular case of the preceding Proposition with $\gamma=0$
and $\d$ replaced by $\d/N$.

\end{rem}
\section{The unitary groups}

The main difficulty 
Rider had to overcome to establish his main result is  the following
  spectral gap (and interpolation)  property of the sequence of the unitary groups
$\{U(n)\mid n\ge 1\}$ , which in our opinion,
is quite deep. Note however that, for the applications to Sidon sets,
any probability with the same gap property as the one denoted  below
by $\nu_n$ would do (see \S \ref{new}).
  
  Let $1\le k\le n$.
 Let $\Gamma(k)\subset U(n)$ be the copy of $U(k)$
  embedded in $U(n)$
      via $a\mapsto a\oplus I$.
Let $\mu_{k,n}$ be the central symmetric probability measure
defined by  
   \begin{equation}\label{w10} \mu_{k,n}=\int \delta_s \ast  m_{\Gamma(k)}\ast  \delta_{s^{-1}}\  m_{U(n)}(ds).\end{equation}
   
 We denote by $\sigma_n\in \widehat{{U(n)}}$    the defining representation of $U(n)$.
 
 We denote by $S_n\subset \widehat {U(n)}$ the set 
 $$S_n=\{ \sigma_n,\ovl{\sigma_n} \}.$$
 For  emphasis : it is crucial in the next statement that $\gamma  <1/2$ be \emph{independent of $n$}.
 
\begin{thm}\label{t1}[Rider, circa 1975, unpublished]\\
For any even $n\ge 2$, let $k=n/2$ and
let $\nu_n=  \mu_{k,n}$. \\
 For any odd $n $,  let $k_+=n/2+1/2$, $k_-=n/2-1/2$ and $\nu_{n}=1/2( \mu_{k_-,n}  +\mu_{k_+,n}   )$.\\
There is a positive constant $\gamma  <1/2$ such that for any $n\ge 4$,  the  symmetric central probability measure $\nu_n$
has a $(1/2,\g)$-spectral gap
with respect to $S_n$.
 More precisely, for any $1/4<\g<1/2$ this holds
 for all sufficiently   large  $n$. 
\end{thm}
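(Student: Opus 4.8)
The plan is to reduce the statement to a purely representation-theoretic computation of the Fourier transform $\hat\mu_{k,n}(\pi)$ for every irrep $\pi$ of $U(n)$, and then to extract a uniform-in-$n$ bound when $k \approx n/2$. Since $\mu_{k,n}$ is central (being an average of conjugates of the Haar measure $m_{\Gamma(k)}$), $\hat\mu_{k,n}(\pi)$ is a scalar $\lambda_{k,n}(\pi)\,I_{d_\pi}$, and a standard averaging argument identifies this scalar: $\lambda_{k,n}(\pi)$ is the normalized multiplicity of the trivial representation of $\Gamma(k)\cong U(k)$ in the restriction $\pi|_{\Gamma(k)}$, i.e.
$$\lambda_{k,n}(\pi)=\frac{1}{d_\pi}\dim\big(\pi|_{U(k)\times U(n-k)}\big)^{U(k)\times\{1\}}.$$
Irreps of $U(n)$ are indexed by non-increasing integer sequences (signatures) $\lambda=(\lambda_1\ge\cdots\ge\lambda_n)$, and the restriction to $U(k)\times U(n-k)$ is governed by the Littlewood–Richardson rule (branching). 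First I would verify the easy endpoint: for $\pi=\sigma_n$ (signature $(1,0,\dots,0)$), $\sigma_n|_{U(k)\times U(n-k)}$ splits as $\sigma_k\oplus 1$ on the second factor plus $1\oplus\sigma_{n-k}$, so the $U(k)$-invariants have dimension $n-k$ and $\lambda_{k,n}(\sigma_n)=(n-k)/n$; with $k=n/2$ this is exactly $1/2$, matching the claim $\hat\nu_n(\sigma_n)=\tfrac12 I$ (and similarly for $\bar\sigma_n$, and for odd $n$ after averaging $k_\pm$). The substance is the gap: for every nontrivial $\pi\notin S_n$ one must show $\lambda_{k,n}(\pi)\le\gamma<1/2$ for all large $n$.

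For the gap bound I would use Weyl's character formula, as the paper's abstract advertises. The multiplicity $\dim(\pi|_{U(k)})^{U(k)}$ can be written as an integral of $\chi_\pi$ over $U(k)$ (or extracted via orthogonality of Weyl characters), and Weyl's integration/character formula turns this into a ratio of determinants / an alternating sum over the symmetric group. The key structural point is that $\pi\notin S_n$ forces the signature $\lambda$ to be ``complicated'' — either it has $\ge 2$ positive parts, or $\ge 2$ negative parts, or a positive part $\ge 2$, or one positive and one negative part — and in each such case the branching to $U(k)\times U(n-k)$ spreads the invariants thinly: the Young-diagram (or skew-shape) content that must be absorbed by the $U(k)$-factor as the trivial representation is too large relative to $d_\pi$. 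Concretely I would aim to show $\lambda_{k,n}(\pi)$ is maximized, among nontrivial $\pi\notin S_n$, by the adjoint-type representation $\sigma_n\otimes\bar\sigma_n$ (signature $(1,0,\dots,0,-1)$) or the symmetric/exterior square of $\sigma_n$, and that for these the ratio tends to a constant strictly below $1/2$ as $n\to\infty$ (one expects something like $1/4$ in the limit, which is why the theorem asserts the bound for any $\gamma>1/4$). This requires isolating the finitely many ``extremal'' families of signatures, computing $d_\pi$ and the invariant-dimension asymptotically for each (a Vandermonde/hook-content computation), and showing all remaining $\pi$ give even smaller ratios — e.g. by a monotonicity argument in the parts of $\lambda$.

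The main obstacle is precisely this last uniformity: showing that the supremum over \emph{all} nontrivial $\pi\notin S_n$ of $\lambda_{k,n}(\pi)$ stays bounded away from $1/2$, uniformly in $n$. The difficulty is that $\widehat{U(n)}$ is infinite, so one cannot just check cases; one needs either (a) a clean combinatorial inequality, derived from Littlewood–Richardson coefficients, bounding the trivial-$U(k)$-multiplicity in $\pi|_{U(k)\times U(n-k)}$ by a fixed fraction of $d_\pi$ whenever $\lambda$ is not a singleton $\pm e_i$-type signature, or (b) Weyl-character-formula estimates controlling the relevant determinantal ratios uniformly. I expect the cleanest route is a \emph{restriction-in-stages} / positivity argument: the invariants for $U(k)$ inside $\pi|_{U(k)\times U(n-k)}$ decompose further under the embedding $U(n-k)\hookrightarrow U(n)$, and iterating this down to $U(1)$-level lets one compare $\lambda_{k,n}(\pi)$ against $\lambda_{k,n}(\sigma_n)$ directly, with the ``loss'' at each stage strictly positive unless $\pi\in S_n$. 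Handling the odd-$n$ case adds only the mild complication that $k_+\ne k_-$, but averaging $\mu_{k_+,n}$ and $\mu_{k_-,n}$ smooths this out, and the parity obstruction (why one cannot simply take $k=(n\pm1)/2$ alone) disappears after the average, which is the reason for defining $\nu_n$ as a mean of two measures in that case.
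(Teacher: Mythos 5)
Your setup is correct and matches the paper: $\hat\mu_{k,n}(\pi)$ is a scalar identified (via conjugation-averaging of $m_{\Gamma(k)}$ and Schur's lemma) with the normalized multiplicity of the trivial $\Gamma(k)$-representation in $\pi|_{\Gamma(k)}$, the endpoint $\lambda_{k,n}(\sigma_n)=(n-k)/n$ is right, and your guess that the extremal families (giving ratio $\to 1/4$) are the adjoint-type signature $(1,0,\dots,0,-1)$ and the symmetric/exterior squares of $\sigma_n$ is also correct — these are precisely the cases that saturate the paper's bounds. But the proposal then stops at listing three candidate routes (an LR-coefficient inequality, determinantal estimates, a ``restriction-in-stages'' iteration down to $U(1)$), none of which is developed, and the last of which is not what the paper does and seems to me unlikely to close uniformly: the supremum over all $\lambda$ of a normalized branching multiplicity is exactly the hard content, and acknowledging it is not proving it.

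The missing ingredient, and the real mechanism of the paper's proof, is a clean algebraic expression for $\lambda_{k,n}(\pi_m)$ followed by a factorization that makes the infinite supremum tractable. Writing $m=\lambda-(d,\dots,d)$ with $\lambda_n=0$, one uses Weyl's character formula together with the Jacobi--Trudi expression $\chi_\lambda=s_\lambda$ and the branching rule $s_\lambda(t)=\sum_{\mu\subset\lambda}s_\mu(t_1,\dots,t_k)s_{\lambda\setminus\mu}(t_{k+1},\dots,t_n)$ to get the exact formula $\hat\mu_{k,n}(\pi_m)=s_{\lambda\setminus[d]^k}(1^{n-k})/s_\lambda(1^n)$ (Lemma \ref{l10}). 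The decisive step you do not have is the combinatorial bound $s_{\lambda\setminus[d]^k}(1^{n-k})\le s_{\lambda'}(1^{n-k})\,s_{\lambda''}(1^{n-k})$ with $\lambda'=(\lambda_1-d,\dots,\lambda_k-d)$ and $\lambda''=(\lambda_{k+1},\dots,\lambda_n)$ (Lemma \ref{l11}, a simple injection of tableaux). Combined with Weyl's dimension formula \eqref{w3} applied to $\lambda'$, $\lambda''$, and $\lambda$, this collapses the ratio to the reciprocal of a single explicit product of cross terms $\prod_{i\le k<j\le n}\frac{\lambda_i-\lambda_j+j-i}{j-i}\ge 1$, which is monotone in $\lambda$ under pointwise comparison. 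That monotonicity is what lets one handle all of $\widehat{U(n)}$ at once by comparing each $\lambda\notin\{\lambda_+,\lambda_-\}$ against one of a short list of minimal signatures, yielding bounds $\approx\frac{(n-k)(n-k\pm1)}{n(n\pm1)}$ or $\approx\frac{k(n-k)}{(n+1)(n-1)}$, all tending to $1/4$ when $k/n\to 1/2$. Without this product reduction, a case-by-case check over an infinite set is not a proof; with it, the problem becomes an elementary (if somewhat tedious) inequality, plus the slight modification for odd $n$ (taking $k+1$ rows in $\lambda'$) that your remark about averaging $k_\pm$ does not by itself address.
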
  
   
  \begin{rem} The case $n=1$, $G=\T=\R/2\pi\Z$ is classical.
  Then the probability measure $$\mu(dt)=(1+\cos t) m_\T(dt)$$
  (which is the building block for Riesz products)
  satisfies the analogous interpolation property, with $\gamma=0$.
  \end{rem}  
   \begin{cor}\label{c1} Let $(d_k)_{k\in I}$ be an arbitrary collection of integers.
   Let $G=\prod_{k\in I} U(d_k)$.
 Let $S\subset \hat G$ be the subset formed of all representations
 $\pi$ that, for some $k\in I$, are 
 of the form    $\pi(g)= \sigma_{d_k}(g_{k})$   ($g\in G$).
  For any $0<\vp<1$ there is a measure $\mu_\vp\in M(G)$
  such that
  $$ \hat \mu_\vp(\pi)=I \ \forall \pi \in S,\quad \sup_{\pi\not\in S}\|\hat \mu_\vp(\pi)\|\le \vp  \text{  and }
  \|\mu\|\le w(\vp)$$
  where $w(\vp)$ depends only on $\vp$.
  \end{cor}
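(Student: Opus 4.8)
The plan is to deduce the corollary from Rider's Theorem~\ref{t1} together with Propositions~\ref{p1} and~\ref{p2} and Remark~\ref{r10}, the whole point being to reduce it to producing \emph{one} spectral gap on $G$ whose parameters do not depend on the family $(d_k)$; the only deep ingredient — a spectral gap for the groups $U(n)$, $n\ge 4$, that is uniform in $n$ — is exactly Theorem~\ref{t1}. Throughout, I read $S$ as the symmetric set $\{\sigma_{d_k},\ovl{\sigma_{d_k}}\mid k\in I\}=\dot\Sigma_{k\in I}S_{d_k}$ (symmetry being what the spectral-gap machinery requires; compare Remark~\ref{diff}).

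First I would record the reduction. Suppose $S$ is $(\d,\g)$-isolated (Definition~\ref{d1bis}) for some $0<\g<\d<1$ \emph{independent of} $(d_k)$. By the first assertion of Proposition~\ref{p1}, $S$ is then an $\vp$-peak set for some $\vp_0\in(0,1)$ with constant $w_0$, where $\vp_0$ and $w_0$ depend only on $\g,\d$ — indeed the proof there yields $\vp_0=\g/\d$ and $w_0=2/\d$, since the isolating measure is a probability. Given any $\vp\in(0,1)$, pick $m$ with $\vp_0^m\le\vp$; by Remark~\ref{r10} the convolution power $\nu_0^{\ast m}$ of the peaking measure witnesses that $S$ is an $\vp$-peak set with constant $w(\vp):=w_0^m$, a quantity depending only on $\vp$. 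That $\nu_0^{\ast m}$ is the desired $\mu_\vp$: its Fourier transform equals $I$ on $S$, has norm $\le\vp$ on every $\rho\notin S$, and — because Proposition~\ref{p1} furnishes a measure of the shape $\d^{-1}(\mu-m_G)$, which kills the trivial representation (not a member of $S$) — obeys the bound $\le\vp$ there as well. This is precisely the statement to be proved, so it remains to isolate $S$ with parameters uniform in $(d_k)$.

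To do that, for each $k\in I$ I would build a central probability $\mu_k\in\p(U(d_k))$ with a $(1/18,\g_0)$-spectral gap with respect to $S_{d_k}$, where $\g_0<1/18$ is a universal constant. For $d_k\le 3$, take $\mu_k:=\nu^{d_k/18}_{d_k}$ as in Lemma~\ref{l12}; this is legitimate since $d_k/18\le 1/(2d_k)$ (equivalently $d_k^2\le 9$), and the lemma hands it a $(1/18,0)$-gap with respect to $S_{d_k}$. For $d_k\ge 4$, let $\nu_{d_k}$ be the measure of Theorem~\ref{t1}, with its $(1/2,\g)$-gap with respect to $S_{d_k}$ for a universal $\g<1/2$, and set $\mu_k:=\tfrac19\nu_{d_k}+\tfrac89\,m_{U(d_k)}$; this is again a central probability, with $\hat{\mu_k}(\sigma_{d_k})=\hat{\mu_k}(\ovl{\sigma_{d_k}})=\tfrac1{18}I$ and $\|\hat{\mu_k}(\rho)\|=\tfrac19\|\hat{\nu_{d_k}}(\rho)\|\le \g/9<1/18$ for every nontrivial $\rho\notin S_{d_k}$, i.e.\ a $(1/18,\g/9)$-gap. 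Setting $\g_0:=\g/9\in(0,1/18)$ and noting that a $(\d,\g_1)$-gap is also a $(\d,\g_2)$-gap whenever $\g_1\le\g_2<\d$, each $\mu_k$ has a $(1/18,\g_0)$-gap with respect to $S_{d_k}$. Finally I would form $\mu:=\bigotimes_{k\in I}\mu_k$, a well-defined probability on the compact group $G$ (each irrep of $G$ depends on finitely many coordinates, and the proof of Proposition~\ref{p2} applies verbatim to an arbitrary, not necessarily countable, product). Proposition~\ref{p2} then shows that $\mu$ has a $(1/18,\g')$-spectral gap with respect to $\dot\Sigma_{k\in I}S_{d_k}=S$, where $\g'=\max\{\g_0,(1/18)^2\}$ satisfies $0<\g'<1/18$. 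Hence $S$ is $(1/18,\g')$-isolated with constants independent of $(d_k)$, which together with the reduction above finishes the proof.

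I do not expect a genuine obstacle here: all the depth sits inside Theorem~\ref{t1} (the $n$-uniform gap for $U(n)$, $n\ge 4$), which is assumed. The only points requiring care are bookkeeping: one must force a \emph{common} gap width across all factors — whence the fixed convex combination with Haar measure for $d_k\ge 4$ and the rescaling of $\d$ in Lemma~\ref{l12} for the small factors, the value $1/18$ being dictated by the constraint $\d\le 1/(2n)$ at $n=3$ — and one must keep the residual norms $\g_0$ uniformly below that width so that the quantity $\g'=\max\{\g_0,\d^2\}$ produced by Proposition~\ref{p2} still defines a genuine gap; and one must observe that an arbitrary product of compact groups behaves here just like a countable one.
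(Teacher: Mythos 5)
There is a genuine gap, and it is explicitly flagged at the start of your write-up: you ``read $S$ as the symmetric set $\{\sigma_{d_k},\ovl{\sigma_{d_k}}\mid k\in I\}$.'' But the corollary's $S$ is \emph{not} symmetric: it consists only of the representations $g\mapsto\sigma_{d_k}(g_k)$, and the conjugates $g\mapsto\ovl{\sigma_{d_k}}(g_k)$ lie outside $S$. The conclusion $\sup_{\pi\not\in S}\|\hat\mu_\vp(\pi)\|\le\vp$ therefore demands $\|\hat\mu_\vp(\ovl{\sigma_{d_k}})\|\le\vp$. Your construction delivers a measure whose Fourier transform is $I$ on \emph{both} $\sigma_{d_k}$ and $\ovl{\sigma_{d_k}}$, so at each $\ovl{\sigma_{d_k}}\notin S$ the norm is $1$, not $\le\vp$. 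This is not a cosmetic point: in the proof of Theorem~\ref{t2}, Corollary~\ref{c1} is invoked precisely to get $\|\hat\nu(r)\|\le\vp$ for every $r$ outside the set of (unconjugated) coordinate representations $\Lambda'$, which includes the conjugates.

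The missing step is the paper's final $\T$-averaging. Having produced $\nu_1$ with $\hat\nu_1=I$ on $S\cup\bar S$ and $\|\hat\nu_1\|\le\vp$ off $S\cup\bar S$, one sets $Z(z)\in G$ by $Z(z)_k=zI_{d_k}$ for $z\in\T$, and forms
$$\nu_2=\int_\T z\,(\delta_{Z(z)}\ast\nu_1)\,m_\T(dz).$$
Since $\hat{\delta_{Z(z)}}(\sigma_{d_k})=\bar z I$ and $\hat{\delta_{Z(z)}}(\ovl{\sigma_{d_k}})=z I$, one gets $\hat\nu_2(\sigma_{d_k})=I$, $\hat\nu_2(\ovl{\sigma_{d_k}})=0$, and $\|\hat\nu_2(\pi)\|\le\|\hat\nu_1(\pi)\|$ for every $\pi$ (the $\hat{\delta_{Z(z)}}(\pi)$ are unitary, $|z|=1$), with $\|\nu_2\|\le\|\nu_1\|$. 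Appending this to your argument closes the gap.

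Aside from that, the bulk of your argument is correct and in fact takes a slightly different, arguably cleaner route to the uniform isolation than the paper. Where the paper splits $G=G_1\times G_2$ according to $d_k<N$ or $d_k\ge N$, isolates the two blocks with different parameters, and equalizes by taking convolution powers, you equalize at the level of the single factors: for $d_k\le 3$ you rescale via Lemma~\ref{l12}, and for $d_k\ge 4$ you mix Rider's measure $\nu_{d_k}$ with Haar measure in a fixed proportion, arranging a common $(\tfrac1{18},\g_0)$-gap on every $U(d_k)$ before applying Proposition~\ref{p2} once. Both approaches use the same inputs (Theorem~\ref{t1}, Lemma~\ref{l12}, Propositions~\ref{p1} and~\ref{p2}, Remark~\ref{r10}); yours avoids the two-block decomposition and the iterated convolutions, at the cost of a more generous gap width $1/18$ (the paper effectively reaches $1/2^m$ for the minimal $m$ with $2^m\ge 2N$). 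Neither affects the final order of $w(\vp)$, which in both cases grows like a power of $1/\vp$ (the sharper $O(\log(1/\vp))$ in \S\ref{new} needs the different machinery of \cite{Pi3}).
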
 
  \begin{proof} By Theorem \ref{t1}, there is
  $N$ (e.g. $N=4$) and $0<\g<1/2$ such that
  $S_n$ has a $(1/2,\g)$-spectral gap for any $n\ge N$.
    Let $G=G_1\times G_2$ with
  $G_1=\prod_{d_k< N} U(d_k)$ and $G_2=\prod_{d_k\ge N} U(d_k)$.
  Let $S_1\subset \hat{G_1}$ and $S_2\subset \hat{G_2}$ 
  be the corresponding subsets 
  and let $\Lambda_j=
  S_j\cup \ovl{S_j}$ ($j=1,2$).
  By Remark \ref{rl12}, 
  $\Lambda_1$ is $(\lambda,\lambda^2)$-isolated for any $0<\lambda\le 1/2N$. We may clearly assume $\g\ge 1/4$. Then
  by   Proposition \ref{p2}, 
  $\Lambda_2$ is $(1/2,\g)$-isolated.
  Taking convolution powers, we see that it is also
  $(1/2^m,\g^m)$-isolated for any integer $m\ge 1$.
  Choose $m$ minimal but large enough so that $1/2^m\le 1/(2N)$.
   Let $\d=1/2^m$ and
       $\g'=\max\{\g^m,\d^2\}$. Then
  both $\Lambda_1$ and $\Lambda_2$ are $(\d,\g')$-isolated.
  Therefore,   by   Proposition \ref{p2}
    $S\cup \bar S$ is also $(\d,\g')$-isolated.
   By Proposition     \ref{p1},
  $S\cup \bar S$ is an $\vp$-peak set for some $0<\vp<1$.
  Let $\nu_1\in M(G)$ be  such that $\hat \nu_1=I$ on $S\cup \bar S$ but
  $\|  \hat \nu_1\|\le \vp$ outside $S\cup \bar S$. It remains to
  show the same but with
  $S$  in place of $S\cup \bar S$.
  For any $z\in \T$, let $Z(z)\in G$ be the element
   such that $Z(z)_k=z I_k$. Note $\hat{\delta_{Z(z)} } (\sigma_k)=\bar z I_k$.
   Then let 
   $$\nu_2=  \int  z (\delta_{Z(z)} \ast \nu_1) m_{\T}(dz).$$
   Now $\hat \nu_2=I$ on $S $, and $\hat \nu_2=0$ on $\bar S $. 
   Also $\|\hat \nu_2\|\le \|\hat \nu_1\|$ on all of $\hat G$.
   Thus
  $\|  \hat \nu_1\|\le \vp$ outside $S$ and $\| \nu_2 \|\le \|\nu_1  \|$.
  By Remark \ref{r10} this completes the proof.
   \end{proof}

   We will need some background on irreps of the unitary groups. The ultraclassical reference is Hermann Weyl's \cite{W}.
   See e.g. \cite{Amr,Sag,Sta}  for more  recent accounts on the combinatorics of this rich subject.  We greatly benefitted
   from the expositions in \cite{Fa} and \cite{Fu}.
   
 Recall that for any compact group $G$, the set $\hat G$ consists of    
    irreps on $G$ with exactly one  representative, up to unitary equivalence, 
    of each   irrep. 
       Let $G=U(n)$. Then $\hat G$ is in 1-1 correspondence
       with the set of $n$-tuples $m=(m_1,m_2,\cdots,m_n)$ in $\Z^n$
       such that $m_1\ge \cdots\ge m_n$.
       Let $t=(t_1,\cdots,t_n) \in \CC^n$.
       Let $A_m(t )$ denote the determinant of the $n \times n$-matrix
       $a_m(t )$ defined by
       $$a_m(t )_{ij}= t_i^{m_j}.$$
       Let $\delta=(n-1,n-2,\cdots,1,0)$.
       Let $\pi_m$ be the irrep corresponding to $m$,
       and let $\chi_m$ denote its character.
       Then for any unitary $g\in U(n)$ with eigenvalues
       $t=(t_1,\cdots,t_n) \in \T^n$, $g$ is unitarily equivalent
       to the diagonal matrix $D(t)$ with coefficients $t$.
       This implies that $\chi_m(g)={\rm tr} (\pi_m(g))={\rm tr} (\pi_m(D(t)))= \chi_m(D(t))$. For simplicity,
       we will identify $t$ with  $D(t)$ and we set
       $\chi_m(t)=\chi_m(D(t))$. 
       We can now state Weyl's fundamental character formula,
       which goes back to \cite{W} : 
       \begin{equation}\label{w1}
       \chi_m(t)=\frac{A_{m+\delta}(t )}{A_{ \delta}(t )}.\end{equation}
       Note that $A_{ \delta}(t )$ is but the classical Vandermonde determinant
       $$A_{ \delta}(t )=\prod_{i<j} (t_i-t_j).$$
       We observe that for any $d\in \Z$ we have
       $$A_{m+(d,\cdots,d)}(t )=(t_1t_2\cdots t_n)^{d}  A_{m}(t )$$
       and hence for any $g\in G$
       $$\chi_{m+(d,\cdots,d)}(g )={\rm det}(g)^{d}  \chi_m(g).$$
       Thus if we choose $d=-m_n$, and set $\lambda_j=m_j+d$,
       we have $\lambda_1\ge \cdots \lambda_{n-1}\ge \lambda_n=0$,
       and
        \begin{equation}\label{w13}\chi_m(g)= {\rm det}(g)^{m_n} \chi_\lambda(g).\end{equation}
              
       \def\det{{\rm det}}
\begin{rem}\label{raa}[Distinguished representations of $U(n)$] The trivial representation of $U(n)$ corresponds to $m_1=\cdots=m_n=0$, so that $d=0$ and $\lambda_1=\cdots=\lambda_n=0$, and 
then $\chi_m(t)=1$ for all $t\in U(n)$.\\
The representation $\sigma_n(t)=t$ corresponds to
$m=\lambda=(1,0,\cdots,0)$ and $d=0$.
Then $$\chi_m(t)=t_1+\cdots+ t_n.$$
The representation $\sigma_n(t)=\bar t$ corresponds to
$m =(0,\cdots,0,-1)$ or equivalently to
$ \lambda=(1,\cdots,1,0)$ and $d=1$. Then $$\chi_m(t)=\bar t_1+\cdots+\bar t_n=(\prod\nolimits_{j\not=1} t_j+\cdots+\prod\nolimits_{j\not=n} t_j) \ \det(t)^{-1} .$$
In the sequel, we denote
$$\lambda_+=(1,0,\cdots,0)\quad\text{and}\quad
\lambda_-=(1,\cdots,1,0).$$
 \end{rem}
       The point of 
       \eqref{w13} is that
       now $\lambda$ can be identified with a Young diagram
       with a first row of $\lambda_1$ boxes,
       sitting as usual above a second row of $\lambda_2$ boxes, and so on.
        This will allow us to take advantage of the so-called Jacobi-Trudi
       formula (see \cite[p. 75]{Fu}) :
          \begin{equation}\label{w2}\chi_\lambda(t)=s_\lambda(t),\end{equation}
       where $s_\lambda$ 
       is the famous Schur symmetric polynomial in $t=(t_1,\cdots,t_n)$, which can be defined for $\lambda\not=0$ as the sum
       \begin{equation}\label{w2'}s_\lambda(t)=\sum t^T\end{equation}
      running over all the admissible fillings (or ``tableaux") $T$
      of the diagram $\lambda$ with the numbers $1,2,\cdots,n$.
      Here an admissible filling assigns
      to any box a number in $1,2,\cdots,n$
      so that the numbers are strictly increasing when running down
      a column and weakly increasing along each row, 
      and $$t^T=\prod\nolimits_{1\le i\le n}  t_i^{r_i}  $$
      where $r_i\ge 0$ is the number of times $i$ is used in the filling $T$.\\
      By convention, for the case $\lambda_1=\cdots=\lambda_n=0$, we set $s_0(t)=1$.
      
      Let  $1^n=(1,\cdots,1)$ where $1$ is repeated $n$ times.
      Then  \eqref{w2'} implies
      \begin{equation}\label{w2''} s_\lambda(1^n)=|\{ T\}|,\end{equation}
     i.e. $ s_\lambda(1^n)$ is the number of admissible fillings
     of $\lambda$ with the numbers $1,2,\cdots,n$.

      Then for any $\lambda=( \lambda_1,\cdots,\lambda_n )$
      with $\lambda_1\ge\cdots \ge\lambda_n\ge 0$ we have
        \begin{equation}\label{w3}\chi_\lambda(1^n)=s_\lambda(1^n)
        =\prod_{i<j} \frac{\lambda_i-\lambda_j+j-i}{j-i}.\end{equation}
         Note that
    $ \frac{\lambda_i-\lambda_j+j-i}{j-i} \ge 1$ for all $i<j$.

      This classical formula can be deduced from \eqref{w1}:
      by setting $t=(1,x,x^2,\cdots,x^{n-1})$, 
    and observing that $A_{\lambda+\delta}(1,x,x^2,\cdots,x^{n-1})$
      is a Vandermonde determinant, we have
      $$\chi_\lambda(1,x,x^2,\cdots,x^{n-1})
      =x^{\sum (i-1)\lambda_i} \prod_{i<j} \frac {x^{\lambda_i-\lambda_j +j-i} -1}{  x^{ j-i}-1}.
      $$ Then letting $x$ tend to $1$,
       and making the obvious
      common division in numerator and denominator,
      \eqref{w3} follows.

     The preceding definition of 
   the Schur symmetric polynomial $s_\lambda$
   is classically given as a function of  $k$-variables
   with $k$ not necessarily equal to the number of rows $n$ of  $\lambda$: one sets
    $$s_\lambda(t_1,\cdots,t_k)=\sum t^T$$
    where   the sum runs over all the admissible fillings
    of the Young diagram $\lambda$ by the numbers $1,2,\cdots,k$, with $t^T$ as before.
    
If  $\lambda_n>0$ and $k<n$, then the first column has length $>k$, so there are no admissible fillings by $(1,\cdots,k)$ and  $s_\lambda(t_1,\cdots,t_k)=0$ in that case.
      
      We now fix $1\le k<n$. \\
      We wish to compute the restriction of $\chi_\lambda$
      to the subgroup $U(k)$ viewed as embedded in $U(n)$
      via $a\mapsto a\oplus I$ or equivalently $a\mapsto\left(\begin{matrix} a &  0\\
       0& I_{n-k}\end{matrix}\right)$.
       In other words we are after a formula for
      $\chi_\lambda(t_1,\cdots,t_k,
      1^{n-k})$. We find it convenient to use
      \eqref{w2} and   \eqref{w2'}.
      Note that any admissible filling of $\lambda$ by $ ( 1,\cdots, n)$
      induces by restricting it to $ ( 1,\cdots, k)$ a filling
      of  a   diagram $\mu\le \lambda$, in the sense
      that $\mu_i\le \lambda_i$ for all $1\le i\le n$.
      The remaining set of boxes, denoted by
      $\lambda\setminus \mu$ is (in general) no longer a   diagram,
      it is only what is called a skew diagram, but the rule for filling
      it is respected by the induced numbering on its rows and columns,
      so that we can extend  to $\lambda\setminus \mu$  the notation
      \eqref{w2'}.
      Thus to any admissible filling 
of $\lambda$ by $ ( 1,\cdots, n)$ we associate
$\mu\le \lambda$ with a filling by $ ( 1,\cdots, k)$ 
and $\lambda\setminus \mu$ with a filling by $ ( k+1,\cdots, n)$.
Conversely, a moment of thought shows that 
separate admissible fillings of $\mu$ by $ ( 1,\cdots, k)$
      and $\lambda\setminus \mu$  by $ ( k+1,\cdots, n)$
   can be joined to form a filling of $\lambda$ by $ ( 1,\cdots, n)$.
   This leads to the identity (see \cite[p. 175]{Sag})
   \begin{equation}\label{w5} s_\lambda(t)=\sum_{\mu\le \lambda} s_\mu(t_1,\cdots,t_k)
   s_{\lambda\setminus \mu}(t_{k+1},\cdots,t_n)
,\end{equation} 
where again we set by convention  $s_{\lambda\setminus \mu}(t_{k+1},\cdots,t_n)=1$ if $\mu=\lambda$.\\
Moreover, we write $\mu \subset\lambda$ when $\mu_i\le \lambda_i$ for all $1\le i\le n$.

\begin{lem}\label{l10} Recall that  $\mu_{k,n}$ is the central symmetric probability measure
defined by  \eqref{w10}.
Let $m=(m_1,\cdots,m_n)\in \Z^n$, and let 
$\lambda_j= m_j-m_n$ ($1\le j\le n$). The Fourier transform
of   $\mu_{k,n}$ is as follows:
If $m_n>0$ we have
$\hat{\mu_{k,n}} (\pi_m)= 0$.\\
If $m_n\le 0$, let $d=-m_n$ and let $[d]^k=(d,\cdots,d,0,\cdots,0)$ with $d$ repeated $k$-times. Then
$\hat{\mu_{k,n}} (\pi_m)= 0$ unless $[d]^k\subset \lambda$ in which case we have
  \begin{equation}\label{e10}\hat{\mu_{k,n}} (\pi_m)= \frac{s_{\lambda\setminus [d]^k}(1^{n-k})}{s_{\lambda }(1^{n})}.\end{equation}
\end{lem}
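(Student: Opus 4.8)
The plan is to exploit that $\mu_{k,n}$ is a central probability measure. By the discussion following \eqref{30}, its Fourier transform at any irrep $\pi_m$ is a scalar multiple of the identity, and, since a scalar matrix $cI_{d_{\pi_m}}$ has trace $c\,d_{\pi_m}$, that scalar is recovered by a normalized trace:
$$\hat{\mu_{k,n}}(\pi_m)=c_m\,I,\qquad c_m=\frac{1}{d_{\pi_m}}\int \overline{\chi_m}\,d\mu_{k,n}.$$
Because $\overline{\chi_m}$ is a class function and $\mu_{k,n}$ is obtained from $m_{\Gamma(k)}$ by averaging over conjugations, $\int\overline{\chi_m}\,d\mu_{k,n}=\int_{\Gamma(k)}\overline{\chi_m}\,dm_{\Gamma(k)}=\int_{U(k)}\overline{\chi_m(a\oplus I_{n-k})}\,m_{U(k)}(da)$; and $d_{\pi_m}=\chi_m(I_n)=s_\lambda(1^n)$ by \eqref{w13}, \eqref{w2} and \eqref{w3}. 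So the entire problem reduces to evaluating the single integral $\int_{U(k)}\overline{\chi_m(a\oplus I_{n-k})}\,m_{U(k)}(da)$. (We may assume $1\le k<n$; for $k=n$ the statement is just $\hat{m_{U(n)}}(\pi_m)=\delta_{m,0}$.)

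For that, first use \eqref{w13}: the eigenvalues of $a\oplus I_{n-k}$ are those of $a$, say $t_1,\dots,t_k$, together with $1$ with multiplicity $n-k$, and $\det(a\oplus I_{n-k})=\det a$, so $\chi_m(a\oplus I_{n-k})=(\det a)^{m_n}\,s_\lambda(t_1,\dots,t_k,1^{n-k})$ by \eqref{w2}. Now apply the splitting identity \eqref{w5} with the first $k$ variables equal to $t_1,\dots,t_k$ and the remaining ones all equal to $1$:
$$\chi_m(a\oplus I_{n-k})=\sum_{\mu\subset\lambda}(\det a)^{m_n}\,s_\mu(t_1,\dots,t_k)\,s_{\lambda\setminus\mu}(1^{n-k}),$$
where only $\mu$ with at most $k$ rows contribute (otherwise $s_\mu$ in $k$ variables vanishes). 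For each such $\mu$, the function $a\mapsto(\det a)^{m_n}s_\mu(a)=(\det a)^{-d}s_\mu(a)$ is precisely the character of the irrep of $U(k)$ with highest weight $(\mu_1-d,\dots,\mu_k-d)$, so by orthogonality of characters on $U(k)$ its integral against $m_{U(k)}$ equals $1$ if $\mu_1-d=\dots=\mu_k-d=0$ and $0$ otherwise.

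Now read off the cases. If $m_n>0$, then $d<0$ and $\mu_j-d\ge|d|>0$ can never vanish, so every term integrates to $0$ and $c_m=0$. If $m_n\le 0$, then $d\ge0$ and the surviving condition forces $\mu=[d]^k$, which is admissible (i.e. $\mu\subset\lambda$) exactly when $[d]^k\subset\lambda$; if $[d]^k\not\subset\lambda$ then $c_m=0$, while if $[d]^k\subset\lambda$ only the term $\mu=[d]^k$ survives. In that term $s_{[d]^k}(t_1,\dots,t_k)=(t_1\cdots t_k)^d=(\det a)^d$, so $(\det a)^{m_n}s_{[d]^k}(a)\equiv1$ with integral $1$, leaving $\int_{U(k)}\overline{\chi_m(a\oplus I_{n-k})}\,m_{U(k)}(da)=s_{\lambda\setminus[d]^k}(1^{n-k})$; dividing by $d_{\pi_m}=s_\lambda(1^n)$ yields \eqref{e10}.

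The only genuinely non-routine points — everything else being bookkeeping with \eqref{w13}, \eqref{w2}, \eqref{w3}, \eqref{w5} — are, first, recognizing $(\det a)^{-d}s_\mu(a)$ as an irreducible $U(k)$-character so that orthogonality applies (one must recall that $s_\mu$ in $k$ variables is the character of the polynomial irrep of highest weight $\mu$ when $\mu$ has at most $k$ parts and vanishes otherwise, which is also what makes the sum in \eqref{w5} effectively finite), and, second, the small but decisive computation $s_{[d]^k}(t_1,\dots,t_k)=(t_1\cdots t_k)^d$ — the rectangular $k\times d$ tableau on the alphabet $\{1,\dots,k\}$ being unique — which is exactly what collapses the surviving coefficient to $1$.
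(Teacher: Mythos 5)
Your proof is correct and takes essentially the same route as the paper: reduce (via centrality and Schur's lemma) to $\frac{1}{\dim\pi_m}\int_{\Gamma(k)}\overline{\chi_m}\,dm_{\Gamma(k)}$, expand $\chi_m$ on $\Gamma(k)$ using \eqref{w13} and the branching/splitting identity \eqref{w5}, then pick out the single surviving term $\mu=[d]^k$ by orthogonality of irreducible characters of $U(k)$, and divide by $\dim\pi_m=s_\lambda(1^n)$.
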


\begin{proof}
We denote by $ (t_1,\cdots,t_k,1^{n-k})$ the eigenvalues of $g\in \Gamma(k)$,
with $t=(t_1,\cdots,t_k)\in \T^k$.
Then
$$\widehat{\mu_{k,n}}(\pi_m)=   \frac{1}{\dim(\pi_m)} F_{k,n}(\pi_m) \ I $$ where
by \eqref{w13}
$$F_{k,n}(\pi_m)= \int  \ovl{ \det(g)^{-d} \chi_\lambda(g)} m_{\Gamma(k)}(dg)= \int   {(t_1\cdots t_k)^{d}  } \ \ovl{ \chi_\lambda(t_1\cdots t_k,1^{n-k})} m_{\Gamma(k)}(dg).$$
By \eqref{w5} we have 
$$\chi_\lambda(t_1\cdots t_k,1^{n-k} )  
=\sum_{\mu\le \lambda} s_\mu(t_1,\cdots,t_k)
   s_{\lambda\setminus \mu}(1^{n-k}).$$
   Since the characters of $ {\Gamma(k)} $ are orthonormal in $L_2(m_{\Gamma(k)})$
  the integral
   $$\int {(t_1\cdots t_k)^{d}  } \ \ovl{ 
   s_\mu(t_1,\cdots,t_k)}
    m_{\Gamma(k)}(dg)  $$
    is  $=1$ if $\pi_\mu$ is equivalent to the irrep $g\mapsto \det(g)^d$ on $\Gamma(k)$, and $=0$ otherwise. \\ Since $g\mapsto \det(g)^d$ on $\Gamma(k)$
    corresponds to $(d,\cdots,d)$ ($k$-times) on $U(k)$, we have
   $$F_{k,n}(\pi)=\sum_{\mu\le \lambda} 
   \int {(t_1\cdots t_k)^{d}  } \ \ovl{ 
   s_\mu(t_1,\cdots,t_k)}
    m_{\Gamma(k)}(dg) \  s_{\lambda\setminus \mu}(1^{n-k})
      = s_{\lambda\setminus [d]^k}(1^{n-k}).
    $$
    More precisely,  $ \widehat{\mu_{k,n}}(\pi)=0$  for all $d<0$, and
    also 
   $ \widehat{\mu_{k,n}}(\pi)=0$ whenever $[d]^k\not \le \lambda$.
   Thus, if  $[d]^k \le \lambda$ and $0\le d\le \lambda_k$, we have
   $$F_{k,n}(\pi)= s_{\lambda\setminus [d]^k}(1^{n-k}).
    $$
    Moreover
     \begin{equation}\label{w12}
     {\dim(\pi_m)} = {\dim(\pi_\lambda)}= \chi_\lambda(1_G)=s_{\lambda }(1^{n}) .\end{equation}
This proves \eqref{e10}.
\end{proof}

\begin{lem}\label{l11}  Let $1\le k< n$. Let   $\lambda=( \lambda_1,\cdots,\lambda_n )$
      with $\lambda_1\ge\cdots \ge\lambda_n= 0$. 
       Assume $[d]^k\subset \lambda$ or equivalently
$0\le d\le \lambda_k$. Let $\lambda'=( \lambda_1,\cdots,\lambda_k )\setminus [d]^k$
and
$\lambda''=( \lambda_{k+1},\cdots,\lambda_{n} )$.
Then
$$s_{\lambda\setminus [d]^k}(1^{n-k})\le s_{ \lambda'}(1^{n-k})s_{ \lambda''}(1^{n-k}) .$$
 We have equality if   $ \lambda_{k+1}\le d$. \\
Moreover, $s_{\lambda\setminus [d]^k}(1^{n-k})=0$ 
if $d<\lambda_{n-k+1}$
(and a fortiori if $k+1> n-k$ and $d<\lambda_{k+1}$).
\end{lem}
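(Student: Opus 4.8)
The plan is to read all three quantities combinatorially through \eqref{w2'}: for any (possibly skew) shape $\theta$, $s_\theta(1^{n-k})$ counts the admissible fillings of $\theta$ by the numbers $1,\dots,n-k$. Since $[d]^k$ is supported on the first $k$ rows, the skew shape $\lambda\setminus[d]^k$ breaks into two blocks along the horizontal line after row $k$: its top $k$ rows form exactly the skew shape $\lambda'=(\lambda_1,\dots,\lambda_k)\setminus[d]^k$ (row $i\le k$ occupying columns $d+1,\dots,\lambda_i$), and its bottom $n-k$ rows form, after reindexing rows, the straight shape $\lambda''=(\lambda_{k+1},\dots,\lambda_n)$. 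So the task is to compare the number of admissible fillings of $\lambda\setminus[d]^k$ with the product of the numbers of admissible fillings of $\lambda'$ and of $\lambda''$.

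For the inequality I would use the map $T\mapsto(T',T'')$ sending an admissible filling $T$ of $\lambda\setminus[d]^k$ to its restrictions to the top $k$ rows and to the bottom $n-k$ rows. Restricting an admissible filling to a block of consecutive rows is again admissible (deleting rows preserves weak increase along rows and strict increase down columns), so $T'$ is an admissible filling of $\lambda'$ and $T''$ one of $\lambda''$. This map is injective, since $T$ is recovered by placing $T'$ on top of $T''$. Counting fillings gives $s_{\lambda\setminus[d]^k}(1^{n-k})\le s_{\lambda'}(1^{n-k})\,s_{\lambda''}(1^{n-k})$.

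For the equality when $\lambda_{k+1}\le d$: here every box of the bottom block lies in a column $\le\lambda_{k+1}\le d$, while every box of the top block lies in a column $>d$, so the two blocks occupy disjoint sets of columns. Hence the juxtaposition of \emph{any} admissible filling $T'$ of $\lambda'$ with \emph{any} admissible filling $T''$ of $\lambda''$ is an admissible filling of $\lambda\setminus[d]^k$: the weak-increase conditions are internal to each block, and every column meets only one block so the strict-increase conditions are internal too. Thus the map above is onto, hence a bijection, and the counts agree. For the vanishing statement the key observation is that for every $i$ the box $(i,d+1)$ lies in $\lambda\setminus[d]^k$ precisely when $\lambda_i\ge d+1$ (for $i\le k$ because $[d]^k$ removes the $d$ boxes of row $i$, for $i>k$ because $[d]^k$ removes none). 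As $\lambda$ is weakly decreasing, column $d+1$ of $\lambda\setminus[d]^k$ equals $\{i:\lambda_i>d\}=\{1,\dots,\ell\}$ with $\ell=\max\{i:\lambda_i>d\}$, so $d<\lambda_{n-k+1}$ forces $\ell\ge n-k+1$. A column of length $>n-k$ cannot be filled strictly increasingly by $1,\dots,n-k$, so there is no admissible filling and $s_{\lambda\setminus[d]^k}(1^{n-k})=0$. The parenthetical case is the special case $k+1>n-k$: then $n-k+1\le k+1$, so $\lambda_{n-k+1}\ge\lambda_{k+1}$, and $d<\lambda_{k+1}$ already gives $d<\lambda_{n-k+1}$.

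There is no real obstacle here: everything reduces to two elementary facts — restriction of an admissible filling to consecutive rows is admissible, and when the column sets are disjoint the juxtaposition of admissible fillings is admissible. The only point needing a small computation is identifying column $d+1$ of $\lambda\setminus[d]^k$ with $\{i:\lambda_i>d\}$, which is what drives the vanishing part and the condition $\lambda_{k+1}\le d$ for equality.
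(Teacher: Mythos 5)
Your proof is correct and follows essentially the same route as the paper's: the restriction map $T\mapsto(T',T'')$ gives injectivity and hence the inequality, disjointness of column supports when $\lambda_{k+1}\le d$ gives surjectivity and hence equality, and the $(d+1)$-th column having length $\ge n-k+1$ when $d<\lambda_{n-k+1}$ forces the vanishing. You have simply spelled out the column-support and length computations that the paper leaves implicit.
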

\begin{proof} To any  admissible filling of
${\lambda\setminus [d]^k}$
we may associate, by restriction, an admissible filling of $\lambda'$ and one of $\lambda''$. Since
this correspondence is clearly injective, the inequality follows from \eqref{w2''}. 
Equality holds if it is surjective. Consider a pair of separate fillings
of $\lambda'$ and   $\lambda''$. 
If $ \lambda_{k+1}\le d$  there is no problem
   to join them into a filling of ${\lambda\setminus [d]^k}$,
so we have surjectivity.
If $ \lambda_{k+1}> d$ there may be an obstruction, however
$s_{\lambda\setminus [d]^k}(1^{n-k})=0$
if $d<\lambda_{n-k+1}$, because
one cannot fill the $(d+1)$-th column strictly increasingly by $1,\cdots,n-k$
(that column being of length $\ge n-k+1$ is too long for that). 
\end{proof}

\begin{lem}\label{l13} With the same notation as in  Lemma \ref{l11}:
\begin{itemize}
\item[(i)] If $d=0$ then $ \hat{\mu_{k,n}} (\pi_m)=0$ if $\lambda_{n-k+1}>0$, and
$$ \hat{\mu_{k,n}} (\pi_m)=   \left( \prod_{  i <j , \ j>n-k}   \frac{\lambda_i +j-i}{j-i}\right)^{-1}\text{   if   } \lambda_{n-k+1}=0.
$$
\item[(ii)] If $d\ge 1$,  $[d]^k\subset \lambda$ 
and $n-k\le k$ then
$$ \hat{\mu_{k,n}} (\pi_m)\le  \left( \prod_{  i\le k <j}   \frac{\lambda_i-\lambda_j+j-i}{j-i}\right)^{-1}.
$$
 
\item[(iii)] Moreover, $\hat{\mu_{k,n}} (\pi_m)=0$ if   $\lambda_k< d$ or if $d<\lambda_{n-k+1}$.
\end{itemize}
\end{lem}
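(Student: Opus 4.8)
The plan is to combine the exact formula \eqref{e10} of Lemma~\ref{l10} with the comparison inequality of Lemma~\ref{l11} and Weyl's dimension formula \eqref{w3}, and then to carry out some careful bookkeeping on the index set $\{1\le i<j\le n\}$. I will use two elementary facts repeatedly: for a partition $\nu$ with at most $N$ nonzero parts one has $s_\nu(1^N)=\prod_{1\le i<j\le N}\frac{\nu_i-\nu_j+j-i}{j-i}$ with every factor $\ge1$, while $s_\nu(1^N)=0$ as soon as $\nu$ has more than $N$ nonzero parts; and, writing $Q(S)=\prod_{(i,j)\in S}\frac{\lambda_i-\lambda_j+j-i}{j-i}$ for any set $S$ of pairs $i<j$, one has $Q(S)\ge1$ and $Q$ multiplicative over disjoint unions. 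Part (iii) is then immediate: if $\lambda_k<d$ then $[d]^k\not\subset\lambda$, so $\hat{\mu_{k,n}}(\pi_m)=0$ by Lemma~\ref{l10}; and if $d<\lambda_{n-k+1}$ then either $[d]^k\not\subset\lambda$ (again Lemma~\ref{l10}) or $[d]^k\subset\lambda$, in which case the last assertion of Lemma~\ref{l11} gives $s_{\lambda\setminus[d]^k}(1^{n-k})=0$, whence $\hat{\mu_{k,n}}(\pi_m)=0$ by \eqref{e10}.

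For (i) I take $d=0$, so $[0]^k$ is empty, $\lambda\setminus[0]^k=\lambda$, and \eqref{e10} reads $\hat{\mu_{k,n}}(\pi_m)=s_\lambda(1^{n-k})/s_\lambda(1^n)$. If $\lambda_{n-k+1}>0$ then $\lambda$ has more than $n-k$ nonzero parts, so $s_\lambda(1^{n-k})=0$ and $\hat{\mu_{k,n}}(\pi_m)=0$. If $\lambda_{n-k+1}=0$ then $\lambda_j=0$ for every $j>n-k$, so $s_\lambda(1^{n-k})=Q(\{1\le i<j\le n-k\})$; cancelling this factor from $s_\lambda(1^n)=Q(\{1\le i<j\le n\})$ leaves exactly $Q(\{i<j,\ j>n-k\})$ in the denominator, and since $\lambda_j=0$ there each factor equals $\frac{\lambda_i+j-i}{j-i}$, giving the stated formula.

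For (ii) I assume $d\ge1$, $[d]^k\subset\lambda$ and $n-k\le k$. By Lemma~\ref{l11}, $s_{\lambda\setminus[d]^k}(1^{n-k})\le s_{\lambda'}(1^{n-k})\,s_{\lambda''}(1^{n-k})$ with $\lambda'=(\lambda_1-d,\dots,\lambda_k-d)$ and $\lambda''=(\lambda_{k+1},\dots,\lambda_n)$. Since $\lambda''$ has exactly $n-k$ parts, reindexing gives $s_{\lambda''}(1^{n-k})=Q(\{k<i<j\le n\})$. As for $\lambda'$: if it has more than $n-k$ nonzero parts then $s_{\lambda'}(1^{n-k})=0$, and otherwise $s_{\lambda'}(1^{n-k})=Q(\{1\le i<j\le n-k\})$ (the entries of $\lambda'$ differ from those of $\lambda$ by the constant $d$, and $n-k\le k$); either way $s_{\lambda'}(1^{n-k})\le Q(\{1\le i<j\le k\})$ since $\{i<j\le n-k\}\subset\{i<j\le k\}$ and each factor is $\ge1$. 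Finally I split the Weyl product as $s_\lambda(1^n)=Q(\{i<j\le k\})\,Q(\{i\le k<j\le n\})\,Q(\{k<i<j\le n\})$, each pair $i<j$ with $j>k$ having either $i\le k$ or $k<i<j$. Substituting into \eqref{e10}, the factors $Q(\{i<j\le k\})$ and $Q(\{k<i<j\le n\})$ cancel, and what remains is $\le 1/Q(\{i\le k<j\le n\})$, which is exactly the asserted bound (the product over $i\le k<j$ being understood with $j\le n$).

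The argument uses no idea beyond Lemmas~\ref{l10} and~\ref{l11} and formula \eqref{w3}; the only real care needed is purely organizational---knowing precisely when a Schur value $s_\nu(1^N)$ vanishes (namely when $\nu$ has more than $N$ rows) and choosing the partition of $\{1\le i<j\le n\}$ in (ii) so that the unwanted Weyl factors cancel against the numerator. I expect that bookkeeping, rather than anything conceptual, to be the main obstacle.
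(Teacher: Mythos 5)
Your proof is correct and follows the paper's argument almost verbatim: part (iii) via Lemmas~\ref{l10}--\ref{l11}, part (i) by reading \eqref{e10} at $d=0$ and cancelling the sub-Weyl product, and part (ii) via Lemma~\ref{l11} together with the three-block decomposition of $\{1\le i<j\le n\}$. The only (harmless) divergence is in (ii): the paper bounds $s_{\lambda'}(1^{n-k})\le s_{\lambda'}(1^k)$ directly from the filling count \eqref{w2''} and then applies \eqref{w3}, whereas you reach the same bound $Q(\{i<j\le k\})$ by a short case analysis on whether $\lambda'$ has more than $n-k$ nonzero parts, using monotonicity of the Weyl product under enlarging the index set.
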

\begin{proof} We will use \eqref{e10}. Recall $\lambda_n=0$. \\
(i) Assume $d=0$. Clearly, $s_{\lambda }(1^{n-k})=0$ if $\lambda_{n-k+1}>0$,
   because then we cannot fill the first column. \\ Now
    assume $\lambda_{n-k+1}=0 \ (=\lambda_n)$.
      By \eqref{w3} we have then
$ s_{\lambda }(1^{n-k}) =   \prod_{  i <j\le n-k}   \frac{\lambda_i-\lambda_j+j-i}{j-i} $, and hence
by \eqref{e10} and \eqref{w3} 
   $$  \hat{\mu_{k,n}} (\pi_m) =   \left( \prod_{  i <j , \ j>n-k}   \frac{\lambda_i +j-i}{j-i}\right)^{-1}.
   $$
  (ii) 
   Let $\mu=[d]^k$. Note that
   $[d]^k\subset \lambda$ implies  $\lambda_k\ge d$.
   With the notation of Lemma \ref{l11}, since by \eqref{w2''}
   $n-k\le k$ clearly implies $s_{ \lambda'}(1^{n-k})\le s_{ \lambda'}(1^{k})$,
   we have 
   $$s_{\lambda\setminus [d]^k}(1^{n-k})\le s_{ \lambda'}(1^{k}) s_{ \lambda''}(1^{n-k}) .$$
 We note that $\lambda'_i= \lambda_i-d$ for $ i\le k$
 and $\lambda''_i=\lambda_{k+i}$ for $i\le n-k$. Therefore, by \eqref{w3} 
 on one hand
  \begin{equation}\label{w8} s_{ \lambda'}(1^{k})=\prod_{i<j\le k}  \frac{\lambda'_i-\lambda'_j+j-i}{j-i} =
\prod_{i<j\le k}  \frac{\lambda_i-\lambda_j+j-i}{j-i} ,\end{equation}
and on the other hand
$$ s_{ \lambda''}(1^{n-k})=\prod_{i<j\le n-k}  \frac{\lambda''_i-\lambda''_j+j-i}{j-i} =
\prod_{i<j\le n-k}  \frac{\lambda_{k+i}-\lambda_{k+j}+{k+j}-{k+i}}{{k+j}-{k+i}}$$
or equivalently
  \begin{equation}\label{w9} s_{ \lambda''}(1^{n-k})=
 \prod_{k<i<j\le n }  \frac{\lambda_i-\lambda_j+j-i}{j-i}.\end{equation}
 Dividing  the product of 
  \eqref{w8} and   \eqref{w9} by
   $s_\lambda(1^{n})$ as given by   \eqref{w3},
  we obtain our claim  (ii). \\ \\
(iii) If $\lambda_k< d$, then $[d]^k\subset \lambda$ is impossible,
  and if $d<\lambda_{n-k+1}$ the  $(d+1)$-th column of $\lambda$
  has length $\ge n-k+1$ and hence cannot be filled
  strictly increasingly by $(1,\cdots,n-k)$, so that
 $    s_{\lambda\setminus [d]^k}(1^{n-k})=0$. Thus     $\hat{\mu_{k,n}} (\pi_m)=0$ by \eqref{e10}.
\end{proof}

\begin{lem}\label{l13b} With the same notation as in   Lemma \ref{l11}:
\begin{itemize}
\item[(i)] If $d=0$ then  
\begin{equation}\label{e11} \hat{\mu_{k,n}} (\pi_m)\le    \frac{(n-k)(n-k+1)}{n(n+1)} \text{   if   } \lambda_1\ge 2.
\end{equation}
\begin{equation}\label{e12} \hat{\mu_{k,n}} (\pi_m)\le    \frac {(n-k)(n-k-1)}{n(n-1)}  \text{   if   } \lambda_1=1 \text{ and } \lambda\not=\lambda_+.
\end{equation}
\item[(ii)] If $d\ge 1$ and $n-k\le k$  then
\begin{equation}\label{e13} \hat{\mu_{k,n}} (\pi_m)\le \frac{(n-k)(n-k+1)}{n(n+1)} \text{   if   } \lambda_k\ge 2 .\end{equation}
\begin{equation}\label{e14} \hat{\mu_{k,n}} (\pi_m)\le \frac {(n-k)(n-k-1)}{n(n-1)}  \text{   if   } \lambda_k=\lambda_1= 1 \text{   but  } \lambda\not=\lambda_- .\end{equation}
\begin{equation}\label{e15}  \hat{\mu_{k,n}} (\pi_m)\le  \frac {(n-k)(n-k-1)}{n(n-1)} 
\text{  if  } n-1>k
, \  \lambda_k= 1 ,  \lambda_1\ge 2 \text{   and   } \lambda_{n-1}=0.\end{equation}
\begin{equation}\label{e16}  \hat{\mu_{k,n}} (\pi_m)\le \frac {k(n-k)}{(n+1)(n-1)}  \text{   if   } \lambda_k= 1 , \lambda_1\ge 2  \text{   and   } \lambda_{n-1}\ge 1.\end{equation}
\end{itemize}
\end{lem}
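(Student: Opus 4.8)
The plan is to obtain all six inequalities from the formulas of Lemma~\ref{l13} by a single mechanism: each reduces to bounding from below a product of factors, every one of which is already known to be $\ge 1$, so the whole task is to isolate the few factors that the hypothesis forces to exceed $1$, estimate the rest by $\ge 1$, and let the chosen ones collapse by telescoping. For part (i) ($d=0$) I would use the exact value $\hat{\mu_{k,n}}(\pi_m)=\big(\prod_{i<j,\ j>n-k}\frac{\lambda_i+j-i}{j-i}\big)^{-1}$ valid when $\lambda_{n-k+1}=0$ (and $\hat{\mu_{k,n}}(\pi_m)=0$, hence the claim trivial, otherwise); for part (ii) ($d\ge 1$) I would use $\hat{\mu_{k,n}}(\pi_m)\le\big(\prod_{i\le k<j}\frac{\lambda_i-\lambda_j+j-i}{j-i}\big)^{-1}$ from Lemma~\ref{l13}(ii). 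In several subcases a vanishing criterion (Lemma~\ref{l13}(iii), or the zero part of Lemma~\ref{l11}) makes $\hat{\mu_{k,n}}(\pi_m)=0$ and there is nothing to prove.

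For the ``$+1$''-type bounds \eqref{e11} and \eqref{e13}, the hypothesis ($\lambda_1\ge 2$ for \eqref{e11}; $\lambda_k\ge 2$, hence $\lambda_i\ge 2$ for all $i\le k$, for \eqref{e13}) makes the factors indexed by $i=1$ and $j\in\{n-k+1,\dots,n\}$ (respectively by $j=n$ and $i\in\{1,\dots,k\}$) each at least $\frac{m+2}{m}$ for the relevant $m$; their product telescopes to $\prod_{j=n-k+1}^{n}\frac{j+1}{j-1}=\prod_{i=1}^{k}\frac{n-i+2}{n-i}=\frac{n(n+1)}{(n-k)(n-k+1)}$, and every remaining factor is $\ge 1$, which is exactly what is needed.

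For the ``$-1$''-type bounds one extracts two telescoping contributions. In \eqref{e12}, $\lambda_1=1$ and $\lambda\ne\lambda_+$ force $\lambda=(1^r,0^{n-r})$ with $r\ge 2$ (and $\hat{\mu_{k,n}}(\pi_m)=0$ unless $r\le n-k$), and the product over $i\le r$, $j>n-k$ telescopes in $j$ to $\prod_{i=1}^{r}\frac{n-i+1}{n-k-i+1}\ge\frac{n}{n-k}\cdot\frac{n-1}{n-k-1}$. In \eqref{e14}, $\lambda=(1^r,0^{n-r})$ with $k\le r\le n-2$, and since $\lambda_{k+1}\le 1=d$ the equality case of Lemma~\ref{l11} together with \eqref{e10} gives $\hat{\mu_{k,n}}(\pi_m)=\binom{n-k}{r-k}/\binom{n}{r}=\prod_{i=0}^{k-1}\frac{r-i}{n-i}\le\prod_{i=0}^{k-1}\frac{n-2-i}{n-i}=\frac{(n-k)(n-k-1)}{n(n-1)}$. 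In \eqref{e15}, $\lambda_{n-1}=0$ means the column $j=n-1$ behaves just like the column $j=n$, and $\lambda_1\ge 2$ with $\lambda_i\ge 1$ ($i\le k$) makes these two columns contribute $\ge\frac{n+1}{n-k}$ and $\ge\frac{n}{n-k-1}$ respectively, whose product exceeds $\frac{n(n-1)}{(n-k)(n-k-1)}$.

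The delicate case is \eqref{e16}, whose target $\frac{k(n-k)}{(n+1)(n-1)}$ has a different shape: here $d=1$, $\lambda$ has some number $p\ge 1$ of rows of length $\ge 2$ followed by rows of length $1$ down through row $n-1$, and $\hat{\mu_{k,n}}(\pi_m)=0$ unless $p\le n-k$. I would split $\prod_{i\le k<j}\frac{\lambda_i-\lambda_j+j-i}{j-i}$ according to $i\le p$ vs.\ $p<i\le k$ and $k<j\le n-1$ vs.\ $j=n$: the block ($p<i\le k$, $k<j\le n-1$) equals $1$; the block ($p<i\le k$, $j=n$) telescopes to $\frac{n-p}{n-k}$; the block ($i\le p$, $k<j\le n-1$), using $\lambda_i\ge 2$, telescopes to $\prod_{j=k+1}^{n-1}\frac{j}{j-p}$, which is $\ge\frac{n-1}{k}$ and is $\ge\frac{(n-1)(n-2)}{k(k-1)}$ when $p\ge 2$; and the block ($i\le p$, $j=n$) is $\ge\frac{n+1}{n-1}$ and is $\ge\frac{n(n+1)}{(n-1)(n-2)}$ when $p\ge 2$. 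Multiplying and distinguishing $p=1$ (where the blocks give $\frac{n-1}{k}\cdot\frac{n-1}{n-k}\cdot\frac{n+1}{n-1}=\frac{(n+1)(n-1)}{k(n-k)}$) from $p\ge 2$ (where one uses $n-p\ge k$, a consequence of $p\le n-k$), the product is $\ge\frac{(n+1)(n-1)}{k(n-k)}$ in both cases. I expect this last piece of accounting, together with the handling of boundary values ($k=n-1$, or $p$ close to $n-k$) where a hypothesis becomes vacuous or $\hat{\mu_{k,n}}(\pi_m)$ vanishes, to be the only genuine difficulty; the rest is routine telescoping arithmetic.
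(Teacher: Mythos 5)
Your proof is correct, and while it follows the paper's overall strategy (isolate a telescoping product of factors $\ge 1$ coming from Lemma~\ref{l13} and forget the rest), it diverges from the paper's treatment in two of the subcases in a way worth noting. For \eqref{e14} the paper extracts the two columns $j=n$ and $j=n-1$ from Lemma~\ref{l13}(ii) and must separately dispose of $n=2$ and $n=3$, where $k\ge n-1$ prevents the use of $j=n-1$; you instead observe that $\lambda=(1^r,0^{n-r})$ with $\lambda_{k+1}\le d$ puts you in the equality case of Lemma~\ref{l11}, so by \eqref{e10}, \eqref{w2''} one gets the exact value $\hat{\mu_{k,n}}(\pi_m)=\binom{n-k}{r-k}/\binom{n}{r}$, which yields \eqref{e14} by a one-line monotonicity argument and handles all $n$ uniformly. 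For \eqref{e16} the paper majorizes via the comparison partition $\mu=(2,1,\dots,1,0)$ (using $\lambda\ge\mu$ and $\lambda_j=\mu_j$ for $j>k$) and then computes the resulting product explicitly in one shot; your four-block decomposition according to $i\lessgtr p$ and $j\lessgtr n$ is more laborious and requires the extra case split on $p=1$ versus $p\ge 2$ together with the non-vanishing constraint $p\le n-k$, but it is sound — I checked that for $p\ge 2$ the inequality $(n-p)n\ge(n-1)(k-1)$ you implicitly need reduces to $n+k\ge 1$, which is trivial. The remaining parts, \eqref{e11}–\eqref{e13} and \eqref{e15}, match the paper's computation essentially factor for factor (in \eqref{e15} you happen to exploit $\lambda_1\ge 2$ and get the marginally stronger constant $\frac{(n-k)(n-k-1)}{n(n+1)}$, which of course implies what is claimed). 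In short: the exact-formula shortcut for \eqref{e14} is cleaner than the paper's; the block decomposition for \eqref{e16} is heavier than the paper's comparison trick but correct.
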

\begin{proof} 
(i) We will use  Lemma \ref{l13} (i). Note that if $\lambda_i\ge \mu_i\ge 0$ for all 
   $i\le n$ we must have
   $$\left( \prod_{  i <j , \ j>n-k}   \frac{\lambda_i +j-i}{j-i}\right)^{-1}\le \left( \prod_{  i <j , \ j>n-k}   \frac{\mu_i +j-i}{j-i}\right)^{-1}.$$
   Assume first that $\lambda_1\ge 2$. We compare $\lambda$ with $\mu=(2,0,\cdots,0)$.
   Then
   $$\prod_{  i <j , \ j>n-k}   \frac{\mu_i +j-i}{j-i}\ge \prod_{   \ j>n-k}   \frac{\mu_1 +j-1}{j-1}= \frac{n(n+1)}{(n-k)(n-k+1)}.$$
   Now assume $\lambda_1=1$. Then $\lambda=(1,\cdots,1,0,0,\cdots)$
   where $1$ appears $r$-times.\\  If $\lambda\not=\lambda_+$ (see Remark \ref{raa})
   we must have $r\ge 2$. Then comparing $\lambda$ with $\mu=(1,1,0,\cdots,0)$, we obtain
  $$  \prod_{  i <j , \ j>n-k}   \frac{\lambda_i +j-i}{j-i} \ge   \prod_{   j>n-k}   \frac{\mu_1 +j-1}{j-1}  \prod_{    j>n-k}   \frac{\mu_2+j-2}{j-2}  = \frac{n}{n-k}  \frac{n-1}{n-k-1}.$$
 This proves (i).\\
  We now turn to (ii). Assume $d\ge 1$.
    We   use   Lemma \ref{l13} (ii) but 
    we  distinguish several subcases:
 
 $\dagger $ Assume first that $\lambda_k\ge 2$.
  Then, since $\lambda_n=0$
  $$ \prod_{  i\le k  }   \frac{\lambda_i-\lambda_n+n-i}{n-i}\ge \prod_{  i\le k  }
  \frac{2+n-i}{n-i}= \frac{n(n+1)}{(n-k)(n-k+1)}.$$
  This proves \eqref{e13}.
  
   $\dagger\dagger$ Now assume $\lambda_k=1$, so that $d=1$. Then the case $\lambda_1=1$
   is easy. Indeed, let $k\le s< n$ be such that
   $\lambda_j=1$ for $j\le s$ and $\lambda_j=0$ for $j> s$. 
   Since we exclude
   $\lambda_-$, we know that $s<n-1$ (see Remark \ref{raa})
, and hence
   $\lambda_{n-1}=0$. When $n=2$ this is impossible. When $n=3$,
   the only possibility is $k=1$ and then $\lambda\setminus[1]^k=0$,
   and hence $   \hat{\mu_{k,n}} (\pi_m) =0$.
   Therefore, we may restrict to $n\ge 4$. Note $s<n-1$ guarantees
   $k<n-1$.
   Then using both $j=n$ and $j=n-1$ we find
   $$\prod_{i\le k<j }  \frac{\lambda_i-\lambda_j+j-i}{j-i}\ge 
   \prod_{i\le k }  \frac{\lambda_i +n-i}{n-i}
   \prod_{i\le k  }  \frac{\lambda_i +n-1-i}{n-1-i}\ge
    \prod_{i\le k }  \frac{1 +n-i}{n-i}
   \prod_{i\le k  }  \frac{ n -i}{n-1-i}$$
   $$ = \frac{n(n-1)}{(n-k)(n-k-1)}. $$
   This proves \eqref{e14}.

   $\dagger\dagger\dagger$  Now assume $\lambda_k=1$ (and hence $d=1$) and   $\lambda_1\ge 2$. 
   \\ {\bf Case 1}. Assume first that $\lambda_{n-1}=0$.
   Then, assuming $n-1>k$ 
   
   $$\prod_{i\le k<j }  \frac{\lambda_i-\lambda_j+j-i}{j-i}\ge
   \prod_{i\le k, \ j\in\{n,n-1\} }  \frac{\lambda_i-\lambda_j+j-i}{j-i}\ge
   \prod_{i\le k } \frac{\lambda_i +n-i}{n-i}    \prod_{i\le k } \frac{\lambda_i +n-1-i}{n-2}$$
   $$\ge  \prod_{i\le k }    \frac{1 +n-i}{n-i} \prod_{i\le k }\frac{1 +n-1-i}{n-1-i}=\frac{n}{n-k}\frac{n-1}{n-k-1}.$$
  \\ {\bf Case 2}. Now assume $\lambda_{n-1}\ge 1$. Since we still assume
  $\lambda_k=1$ and   $\lambda_1\ge 2$,  we can compare $\lambda$ with
  $\mu$ defined by
   $\mu_1=2$, $\mu_i=1$ for all $i<n$ and $\mu_n=0$. Since $\lambda\ge \mu$ and
   $\mu_j=\lambda_j$ for all $j>k$
    we have  
  $$\prod_{i\le k<j }  \frac{\lambda_i-\lambda_j+j-i}{j-i}\ge
   \prod_{i\le k<j }  \frac{\mu_i-\mu_j+j-i}{j-i}= \prod_{  k<j<n }  \frac{\mu_1-\mu_j+j-1}{j-1} \prod_{i\le k}  \frac{\mu_i-\mu_n+n-i}{n-i}$$
but
$$   \prod_{  k<j<n }  \frac{\mu_1-\mu_j+j-1}{j-1}=  \frac{n-1}{k}
\text{   and   }
        \prod_{i\le k}  \frac{\mu_i-\mu_n+n-i}{n-i}=\frac{n+1}{n-1}
        \prod_{1<i\le k}  \frac{n-i+1}{n-i}=\frac{n+1}{n-k}         $$
        and hence
        $$\prod_{i\le k<j }  \frac{\mu_i-\mu_j+j-i}{j-i}\ge\frac{(n-1)(n+1)}{k(n-k)}
           .$$
          This proves \eqref{e15}.
\end{proof}
\begin{rem}\label{bra} In the proof of part (ii) in the preceding Lemma \ref{l13b}
the majorizations of  $\hat{\mu_{k,n}} (\pi_m)$ appearing there are all proved actually
for    $\left( \prod_{  i\le k <j}   \frac{\lambda_i-\lambda_j+j-i}{j-i}\right)^{-1}$.
\end{rem}
\begin{lem}\label{l14} With the same notation as in   Lemma \ref{l11}, let $n> 3$ be an odd integer  and let
$k=(n-1)/2>1$ so that $n=2k+1$.
 If $d\ge 1$    then
\begin{equation}\label{e13-} \hat{\mu_{k,n}} (\pi_m)\le \frac{(n-k)(n-k-1)}{n(n+1)} \text{   if   } \lambda_k\ge 2 .\end{equation}
\begin{equation}\label{e14-} \hat{\mu_{k,n}} (\pi_m)\le \frac {(n-k-1)(n-k-2)}{n(n-1)}  \text{   if   } \lambda_k= 1 \text{   and   }\lambda_1= 1 \text{   but  } \lambda\not=\lambda_- .\end{equation}
\begin{equation}\label{e15-}  \hat{\mu_{k,n}} (\pi_m)\le  \frac {(n-k-1)(n-k-2)}{n(n-1)} \text{   if   } \lambda_k= 1 ,  \lambda_1\ge 2 \text{   and   } \lambda_{n-1}=0.\end{equation}
\begin{equation}\label{e16-}  \hat{\mu_{k,n}} (\pi_m)\le \frac {(k+1)(n-k-1)}{(n+1)(n-1)}  \text{   if   } \lambda_k= 1 , \lambda_1\ge 2  \text{   and   } \lambda_{n-1}\ge 1.\end{equation}
\end{lem}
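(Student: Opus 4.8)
The plan is to reproduce, as closely as possible, the argument behind Lemma~\ref{l13b}(ii); the one new feature is that now $n-k=k+1>k$, so the step $s_{\lambda'}(1^{n-k})\le s_{\lambda'}(1^{k})$ used there --- which produced the clean estimate of Lemma~\ref{l13}(ii) --- is no longer available. First I would establish the substitute for that estimate. By \eqref{e10}, $\hat{\mu_{k,n}}(\pi_m)=s_{\lambda\setminus[d]^k}(1^{n-k})/s_\lambda(1^{n})$, and this vanishes unless $\lambda_{k+2}\le d\le\lambda_k$ (Lemma~\ref{l11}); assume this. Applying the majorization $s_{\lambda\setminus[d]^k}(1^{k+1})\le s_{\lambda'}(1^{k+1})\,s_{\lambda''}(1^{k+1})$ of Lemma~\ref{l11}, with $\lambda'=(\lambda_1-d,\ldots,\lambda_k-d)$ and $\lambda''=(\lambda_{k+1},\ldots,\lambda_n)$, and expanding the three factors by \eqref{w3} (the sub-products over pairs inside $\{1,\ldots,k\}$ and inside $\{k+1,\ldots,n\}$ cancel), one is led to
$$\hat{\mu_{k,n}}(\pi_m)\ \le\ \Big(\prod_{i\le k<j}\frac{\lambda_i-\lambda_j+j-i}{j-i}\Big)^{-1}\ \prod_{i=1}^{k}\frac{\lambda_i-d+k+1-i}{k+1-i},$$
so the even-case estimate of Lemma~\ref{l13}(ii) is now multiplied by the extra factor $E:=\prod_{i=1}^{k}\frac{\lambda_i-d+k+1-i}{k+1-i}\ge 1$.

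The decisive step is to absorb $E$ back into the denominator. The $i$-th factor of $E$ is dominated by the $(i,k+1)$ term $\frac{\lambda_i-\lambda_{k+1}+k+1-i}{k+1-i}$ of the denominator exactly when $\lambda_{k+1}\le d$; when this holds, $E$ is swallowed by the column $j=k+1$ and one is left with
$$\hat{\mu_{k,n}}(\pi_m)\ \le\ \Big(\prod_{i\le k,\ k+1<j\le n}\frac{\lambda_i-\lambda_j+j-i}{j-i}\Big)^{-1},$$
i.e. the estimate of Lemma~\ref{l13}(ii) with the single column $j=k+1$ struck out. In the cases \eqref{e14-}, \eqref{e15-}, \eqref{e16-} one has $d=\lambda_k=1$, so $\lambda_{k+1}\le 1=d$ automatically and this clean bound is available at once. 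In \eqref{e13-}, where $\lambda_k\ge 2$, one may have $\lambda_{k+1}>d$; this is the genuinely delicate point, since there the cancellation of $E$ is not free. In that range I would instead retain the large columns $j=n$ and, if needed, $j=n-1$ of the denominator intact (large because $\lambda_n=0$, with $\lambda_i\ge 2$ for $i\le k$) and absorb $E$ against those.

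With the above estimates I would finish exactly as in the proof of Lemma~\ref{l13b}(ii) (cf.\ Remark~\ref{bra}): in each of the four cases, bound the surviving product over $\{i\le k<j\}$ from below by comparison with the smallest Young diagram compatible with the hypotheses --- $(2,0,\ldots,0)$, $(1,1,0,\ldots,0)$ or $(2,1,\ldots,1,0)$ --- arranging where possible (as in Case~2 of that proof) that the comparison diagram $\mu$ agrees with $\lambda$ on all parts of index $>k$, so that $\lambda_i-\lambda_j\ge\mu_i-\mu_j$ term by term; a telescoping of the resulting product then yields the asserted bound. A reliable internal check at this stage is to specialize to the exterior powers $\lambda=(1,\ldots,1,0,\ldots,0)$ and to the adjoint $\lambda=(2,1,\ldots,1,0)$, for which $\hat{\mu_{k,n}}(\pi_m)$ has a simple closed form.

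The real obstacle, relative to the even case, is exactly the factor $E$: one column of the denominator is consumed in absorbing it, so every comparison estimate must be redone with one fewer factor and with the index shift $j\ge k+2$, and the range $\lambda_{k+1}>d$ of \eqref{e13-} must be treated by hand. Keeping the resulting numerology under control tightly enough to land on the stated constants, rather than on slightly weaker bounds, is where the care is concentrated.
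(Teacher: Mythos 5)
Your plan is a close paraphrase of the paper's own proof: redo the decomposition of Lemma~\ref{l11}, record the loss of the step $s_{\lambda'}(1^{n-k})\le s_{\lambda'}(1^{k})$, and absorb the extra factor $E$ against the column $j=k+1$. The bound you land on,
\[
\hat{\mu_{k,n}}(\pi_m)\ \le\ \Bigl(\prod_{i\le k,\ j\ge k+2}\tfrac{\lambda_i-\lambda_j+j-i}{j-i}\Bigr)^{-1},
\]
is the correct intermediate inequality, and the paper's modification of $\lambda'$ (declaring $\lambda'_{k+1}=\max(0,\lambda_{k+1}-d)$ and applying \eqref{w3} to a $(k+1)$-rowed $\lambda'$) simply makes the resulting ratio $\le 1$ uniformly, so the case $\lambda_{k+1}>d$ that worries you for \eqref{e13-} is disposed of for free by that device rather than by treating columns $j=n,n-1$ by hand.

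However, the precise delicate point you flag at the end is exactly where both your outline and the paper's proof break down, and it is irreparable because the constants stated in Lemma~\ref{l14} are in fact false. After the cancellation one obtains $\bigl(\prod_{i\le k,\ j\ge k+2}\bigr)^{-1}$, \emph{not} $\bigl(\prod_{i\le k+1<j}\bigr)^{-1}$: the pairs $(k+1,j)$ with $j\ge k+2$ already appear in $s_{\lambda''}(1^{n-k})$ and cancel against $s_\lambda(1^n)$. Since the latter product is larger, passing to Lemma~\ref{l13b}(ii) with $k+1$ in place of $k$ (which is what the paper does, and what produces the shifts $(n-k)\mapsto(n-k-1)$ etc.\ in \eqref{e13-}--\eqref{e16-}) is not a valid step. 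Concretely, take $n=5$, $k=2$, $d=2$, $\lambda=(2,2,2,2,0)$: by \eqref{e10}, $\hat{\mu_{2,5}}(\pi_m)=s_{(2,2)}(1^{3})/s_{(2,2,2,2)}(1^{5})=6/15=2/5$, which exceeds the bound $(n-k)(n-k-1)/(n(n+1))=1/5$ claimed in \eqref{e13-}; likewise $\lambda=(2,1,1,1,0)$, $d=1$ gives $\hat{\mu_{2,5}}(\pi_m)=9/24=3/8$, exceeding the bound $1/4$ of \eqref{e16-}. The bounds that actually follow from the $\{i\le k,\ j\ge k+2\}$ product are: \eqref{e13}--\eqref{e15} hold \emph{verbatim} in the odd case (their proofs never invoke the column $j=k+1$, and the columns $j=n$, $j=n-1$ used there satisfy $j\ge k+2$ once $n>3$), while \eqref{e16}'s proof does use $j=k+1$ and relaxes to $\frac{(k+1)(n-k)}{(n+1)(n-1)}$. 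These corrected right-hand sides still tend to $1/4$ as $k=(n-1)/2\to\infty$, so the spectral-gap conclusion of Theorem~\ref{t1} survives, but the numerology you rightly hesitated over cannot be "kept under control tightly enough to land on the stated constants" — they are unreachable because they are wrong.
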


\begin{proof} We again decompose $\lambda$ into
$\lambda'$ and $\lambda''$, but we will modify the definition of $\lambda'$. Now
$\lambda'$ will have $k+1$ rows. Its first $k$ rows being as before
the same as those of $\lambda\setminus [d]^k$, and the $(k+1)$-th row
being like this: if $\lambda_{k+1}<d$ we set $\lambda'_{k+1}=0$, while
if $\lambda_{k+1}\ge d$ we set $\lambda'_{k+1}=\lambda_{k+1}-d$.
As for $\lambda''$ it is formed as before of the last $n-k$ rows of $\lambda$.
Then  arguing as in
 Lemma \ref{l11}  we find
$$s_{\lambda\setminus [d]^k}(1^{n-k})\le s_{ \lambda'}(1^{k+1})s_{ \lambda''}(1^{n-k}) .$$
By \eqref{e10}
we have $$
\hat{\mu_{k,n}} (\pi_m)\le  \frac{s_{ \lambda'}(1^{k+1})s_{ \lambda''}(1^{n-k})}{s_{\lambda }(1^{n})}.$$
We now use \eqref{w3} for $\lambda'$, $\lambda''$ and $\lambda$. This gives us
 $$
\hat{\mu_{k,n}} (\pi_m)\le 
\left( \prod_{  i\le    k+1 <j}   \frac{\lambda_i-\lambda_j+j-i}{j-i}\right)^{-1}
\prod_{  i\le k} \frac{\lambda'_i-\lambda'_{k+1}+k+1-i}{\lambda_i-\lambda_{k+1}+k+1-i}.$$
 Now if  $\lambda_{k+1}\ge d$ the second factor is $=1$
 and if $\lambda_{k+1}< d$ we have
 $\lambda'_i-\lambda'_{k+1}=\lambda_i-d< \lambda_i-\lambda_{k+1}$.
 Thus we may remove that second factor.
 Therefore
 $$
\hat{\mu_{k,n}} (\pi_m)\le 
\left( \prod_{  i\le    k+1 <j}   \frac{\lambda_i-\lambda_j+j-i}{j-i}\right)^{-1}.$$
Thus it suffices to majorize
$\left( \prod_{  i\le    k+1 <j}   \frac{\lambda_i-\lambda_j+j-i}{j-i}\right)^{-1}$
by the bounds appearing in Lemma \ref{l14}.
 We  now invoke Remark \ref{bra}. Observing that
 $n-(k+1)\le k+1$ we may apply part (ii) of Lemma \ref{l13b}
 with $k+1$ taking the place of $k$. Then replacing
 $k$ by $k+1$ in the upper bounds appearing in part (ii) in Lemma \ref{l13b}
 and using Remark \ref{bra}
we obtain the desired bounds for 
 $ \left( \prod_{  i\le    k+1 <j}   \frac{\lambda_i-\lambda_j+j-i}{j-i}\right)^{-1}.$
\end{proof}
\begin{proof}[Proof of Theorem \ref{t1}] 
We apply first part (i) in Lemma \ref{l13b} to settle the case $d=0$.
Thus we may assume $d\ge 1$.
We apply then  part (ii) from that same Lemma \ref{l13b} to settle
the cases  either $n=2k$ or $n=2k-1$, 
with the restriction $n-1>k$ which requires $n>3$. 
Then   Lemma \ref{l14} settles the remaining case $n=2k+1$.
Note that $k/n \to 1/2$  when $n\to \infty$   
if either $k=n/2$, $k=k_+$ or  $k=k_-$, and all the bounds
appearing in Lemmas \ref{l13b} and  \ref{l14}
tend to 1/4. Therefore, for any $1/4<\g<1/2$ there
is $n(\g)$ such that 
for any $n\ge n(\g)$ 
$$\sup_{\pi\not\in S_n} \| \hat{\nu_n}(\pi)\|\le \g.$$
Since $\hat{\mu_{k,n}}(\pi)=k/n$ when $\pi=\sigma_n$
 or $\pi=\ovl{\sigma_n}$  (and since $(k_+ + k_-)/2n=1/2$) we have
  $\hat{\nu_n}(\pi)=1/2$.
  Thus ${\nu_n}$ has a $(1/2,\g)$-spectral gap for any $n\ge n(\g)$,
  which
   settles the last assertion in Theorem \ref{t1}.
Checking the bounds for small values
of $n$, actually we can find a $\g<1/2$ whenever
$n\ge 4$.
\end{proof} 
\begin{rem}[A natural question] Assume that $k=[\theta n]$
where $0<\theta<1$ is fixed. Then $\hat{\mu_{k,n}} (\sigma_n)=
\hat{\mu_{k,n}} (\ovl{\sigma_n})=(n-k)/n\approx 1- \theta$.
By   Lemmas \ref{l13b} and  \ref{l14}, if we assume $\theta \le 1/2$
(to ensure that $k\le n-k$) then  
$\mu_{k,n}$ has a $( \d_n ,\g_n )$-spectral gap 
with $\d_n\approx 1-\theta$ and $\g_n\approx  (1-\theta)^2$
when $n\to \infty$. We do not know whether this (or any similar spectral gap) holds when $1/2<\theta <1$.
\end{rem}
\section{Rider's results on Sidon sets}\label{rrs}
We now turn to the applications of the spectral gap obtained in Corollary
\ref{c1}
to Sidon sets. We start with two  simple Lemmas. Their proof is not
too different from their commutative version.
\begin{lem}\label{l4} Let $G$ be any compact group. 
Let $\Lambda\subset \hat G$ be randomly Sidon with constant $C$.
Then for any   finitely supported family
$(b_\pi)$ with $b_\pi\in C(G; M_{d_\pi} )$ ($\pi\in \Lambda$) we have
  \begin{equation}\label{yy}\left|\sum\nolimits_{\pi \in \Lambda} d_\pi \tr \left(\int  {\pi(g)} b_\pi(g)  m_G(dg)\right)\right| \le C \int \left\|\sum\nolimits_{\pi \in \Lambda} d_\pi \tr({u}_\pi  b_\pi)\right\|_\infty m_{{\cl G}}(d{u}).\end{equation}
\end{lem}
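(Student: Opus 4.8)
The plan is to apply the \emph{randomly Sidon} inequality not to the given matrix-valued functions $b_\pi$, but to the constant matrices
$$c_\pi := \int \pi(g)\, b_\pi(g)\, m_G(dg)\ \in M_{d_\pi}\qquad(\pi\in\Lambda),$$
and then to transform the resulting random trigonometric polynomial back into the quantity appearing on the right-hand side of \eqref{yy}. Since $|\tr c_\pi|\le \tr|c_\pi|$, the left-hand side of \eqref{yy} equals $\big|\sum_{\pi\in\Lambda} d_\pi\tr c_\pi\big|\le \sum_{\pi\in\Lambda} d_\pi\tr|c_\pi|$, and applying the definition of ``randomly Sidon'' with $a_\pi=c_\pi$ bounds the latter by
$$C\int \Big\|\sum_{\pi\in\Lambda} d_\pi\tr(u_\pi\, \pi\, c_\pi)\Big\|_\infty m_{\cl G}(du)=C\int \sup_{g\in G}\Big|\sum_{\pi\in\Lambda} d_\pi\tr\big(u_\pi\,\pi(g)\,c_\pi\big)\Big|\, m_{\cl G}(du).$$
So it remains only to see that this last integral is at most $\displaystyle\int \Big\|\sum_{\pi\in\Lambda} d_\pi\tr(u_\pi b_\pi)\Big\|_\infty\, m_{\cl G}(du)$.

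For this I would substitute the definition of $c_\pi$ and use that $\pi$ is a homomorphism (the interchange of $\tr$ with the Bochner integral being legitimate since each $b_\pi$ is continuous): for all $u\in\cl G$ and $g\in G$,
$$\sum_{\pi\in\Lambda} d_\pi\tr\big(u_\pi\,\pi(g)\,c_\pi\big)=\int_G \sum_{\pi\in\Lambda} d_\pi\tr\big(u_\pi\,\pi(gt)\,b_\pi(t)\big)\, m_G(dt).$$
Applying the triangle inequality for the integral, then the substitution $t\mapsto g^{-1}t$ (invariance of $m_G$), and then the elementary fact that the supremum over $g$ of an average over $t$ is at most the average over $t$ of the suprema over $g$, one obtains
$$\sup_{g\in G}\Big|\sum_{\pi\in\Lambda} d_\pi\tr\big(u_\pi\,\pi(g)\,c_\pi\big)\Big|\le \int_G \sup_{g\in G}\Big|\sum_{\pi\in\Lambda} d_\pi\tr\big(u_\pi\,\pi(t)\,b_\pi(g^{-1}t)\big)\Big|\, m_G(dt).$$
Now integrate in $u$ over $\cl G$ and use Fubini. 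For each fixed $t\in G$ the map $u\mapsto (u_\pi\pi(t))_{\pi}$ is a measure-preserving transformation of $(\cl G,m_{\cl G})$ — it is the product of the right translations by $\pi(t)$ on the factors $U(d_\pi)$ — while $g\mapsto g^{-1}t$ is a bijection of $G$; hence, for every fixed $t$, the inner $u$-integral on the right-hand side equals exactly $\int \big\|\sum_{\pi\in\Lambda} d_\pi\tr(u_\pi b_\pi)\big\|_\infty m_{\cl G}(du)$. Integrating this constant over $t\in G$ closes the chain of inequalities and yields \eqref{yy}.

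The argument is short, and its only delicate point is the bookkeeping of the order in which $\sup_{g}$, the Haar integral $\int_G\,m_G(dt)$ arising from the definition of $c_\pi$, and the integral $\int\,m_{\cl G}(du)$ over the random unitaries are performed, so that every inequality points the right way. The one genuinely substantive move is the last transformation: pushing the defining integral of $c_\pi$ to the outside and then absorbing the extra twisting factor $\pi(gt)$ by the translation invariance of $m_{\cl G}$ is precisely what converts the ``twisted'' polynomial $\sum_{\pi} d_\pi\tr(u_\pi\,\pi\,c_\pi)$, to which the hypothesis applies, back into the untwisted polynomial $\sum_{\pi} d_\pi\tr(u_\pi b_\pi)$ with no loss in the constant.
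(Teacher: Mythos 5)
Your proof is correct, and it is essentially the same argument as the paper's: the key inequality in both is that $\int_{\cl G}\bigl\|\sum d_\pi\tr(u_\pi\,\pi\, c_\pi)\bigr\|_\infty\,m_{\cl G}(du)\le\int_{\cl G}\bigl\|\sum d_\pi\tr(u_\pi b_\pi)\bigr\|_\infty\,m_{\cl G}(du)$ with $c_\pi=\int\pi\, b_\pi\,dm_G$, obtained from the translation invariance of $m_{\cl G}$ (absorbing $\pi(t)$ into $u_\pi$), the translation invariance of $m_G$, and the interchange $\sup\int\le\int\sup$. The paper organizes the steps in reverse order (establishing the domination of the right side first, then invoking the randomly-Sidon definition), but the ingredients and the mechanism are identical.
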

\begin{proof} Let $f_\pi(t)=\int  {\pi(g)}  b_\pi(t^{-1} g) m_G(dg)$.
Then by the translation invariance of $m_G$, 
$t\mapsto     {\pi(t^{-1})} f_\pi(t)$ is constant. 
Let $$a_\pi=f_\pi(1)=\int  {\pi(g)}  b_\pi(g) m_G(dg).$$ Thus
$f_\pi(t)= {\pi(t)} f_\pi(1)= {\pi(t)}a_\pi$.
Let us write for short $\EE$ for the integral with respect to $m_{{\cl G}}$.
For any  fixed $g\in G$, by translation
invariance of the norm in $C(G)$ and
since $  ({u}_\pi )$  and $({u}_\pi  {\pi(g)} )$ have the same distribution, we have 
$$  \EE
\sup\n_{t\in G}|\sum\nolimits_{\pi \in \Lambda} d_\pi \tr({u}_\pi  b_\pi(t) )| 
=  \EE
\sup\n_{t\in G}|\sum\nolimits_{\pi \in \Lambda} d_\pi \tr({u}_\pi  {\pi(g)}  b_\pi (t^{-1} g) )|, $$
and hence
$$=\int  \EE
\sup\n_{t\in G}|\sum\nolimits_{\pi \in \Lambda} d_\pi \tr({u}_\pi  {\pi(g)}  b_\pi (t^{-1} g) )| m_G(dg)$$
and by Jensen
this is 
$$\ge \EE \sup\n_{t\in G}|\sum\nolimits_{\pi \in \Lambda} d_\pi \tr({u}_\pi f_\pi(t)| =
  \EE
\sup\n_{t\in G}|\sum\nolimits_{\pi \in \Lambda} d_\pi \tr({u}_\pi  { \pi(t)} a_\pi  )| . $$
Since $\Lambda$ is assumed randomly Sidon, 
 this last term is
$$\ge C^{-1} \sum\nolimits_{\pi \in \Lambda} d_\pi \tr |a_\pi|\ge  C^{-1}| \sum\nolimits_{\pi \in \Lambda} d_\pi \tr (a_\pi)|.$$
This completes the proof.
 \end{proof}
  \begin{rem} Let $b_\pi(g)=\pi(g^{-1}) a_\pi$. In that case
  \eqref{yy} implies
  $$|\sum\nolimits_{\pi \in \Lambda} d_\pi \tr(a_\pi)|\le C \int \|\sum\nolimits_{\pi \in \Lambda}
  d_\pi \tr(u_\pi \pi a_\pi)\|_\infty  m_{\cl G} (du).$$ 
  This shows that \eqref{yy}  generalizes   the randomly Sidon property.
\end{rem}

\begin{lem}\label{l3}\begin{itemize}
\item[(i)] Let $\Lambda
\subset \hat G$ be a Sidon set with constant $C$.  
For any ${u}\in \cl G$ (or merely for any $u\in \prod\n_{\pi\in \Lambda} U(d_\pi)$)  there is
$\mu^{u}\in M(G)$ with $\| \mu^{u} \|\le C$ such that
$\hat{\mu^{u}} (\pi)= {u}_\pi$ for any   $\pi\in \Lambda$.
\item[(ii)]  Let $\Lambda
\subset \hat G$ be a   randomly Sidon  set with constant $C$.  
Then, there is a functional $\varphi\in L_1(  {\cl G}; C(G))^*$
with norm $\le C$ such that for any $\pi\in \Lambda$ and any
$b_\pi\in C(G; M_{d_\pi} )$
$$\varphi (\tr ( {{u}_\pi}  b_\pi)) = \int \tr( {\pi(g)}  b_\pi(g))  m_G(dg). $$
\item[(iii)] Assuming (ii) and assuming $C(G)$ separable, there is a  weak* measurable (in the sense of the following remark) bounded
function ${u} \mapsto {\mu^{u}}\in M(G)$
with  $\sup \|{\mu^{u}}\|_{M(G)}\le C$
such that for any $\pi\in \Lambda$
$$\E(  {{u}_\pi} \mu^{u})=  {\pi}  m_G.$$
The latter  is an equality between matrix valued measures
(or matrices with   entries in $M(G)$)
by which we mean that for any $f\in C(G)$ we have
$$\E\left(  {{u}_\pi} \int f(g)\mu^{u}(dg)) \right)=\int f(g) {\pi(g)}  m_G(dg)=\hat f(\bar\pi).$$
\end{itemize}
\end{lem}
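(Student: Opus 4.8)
All three parts rest on the same Hahn--Banach duality scheme, each a vector--valued refinement of the preceding one. For (i) I would fix $u\in\cl G$ and, on the linear span of the matrix coefficients of the $\pi\in\Lambda$ (that is, on functions $f=\sum_{\pi\in\Lambda}d_\pi\tr(\pi a_\pi)$ with $(a_\pi)$ finitely supported), define the linear functional $\ell(f)=\sum_{\pi\in\Lambda}d_\pi\tr(a_\pi w_\pi)$, where the unitaries $w_\pi$ are chosen so that, after reconciling with the Fourier--pairing formula \eqref{25} (in which $\hat\mu$ is evaluated at conjugate representations), the eventual measure satisfies $\hat\mu(\pi)=u_\pi$ on $\Lambda$; the only subtlety here is the transpose/conjugate bookkeeping, which is harmless because $\ovl\Lambda$ is Sidon with the same constant as $\Lambda$. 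Since each $w_\pi$ is unitary, $|\ell(f)|\le\sum_\pi d_\pi\tr|a_\pi|=\|f\|_{A(G)}\le C\|f\|_{C(G)}$ by the Sidon property, so Hahn--Banach extends $\ell$ to $C(G)$ with norm $\le C$, and the Riesz representation theorem realizes it as integration against a measure $\mu^u\in M(G)$ with $\|\mu^u\|_{M(G)}\le C$; testing $\int f\,d\mu^u=\ell(f)$ on the coefficients of a single $\pi\in\Lambda$ reads off $\hat{\mu^u}(\pi)=u_\pi$ via \eqref{25}.

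For (ii) I would run exactly this scheme with $C(G)$ replaced by the Bochner space $L_1(\cl G;C(G))$. Inside it sits the linear subspace $E$ of the (jointly continuous, hence integrable) functions $F_{(b_\pi)}\colon(u,g)\mapsto\sum_{\pi\in\Lambda}d_\pi\tr(u_\pi b_\pi(g))$ attached to finitely supported families $(b_\pi)$ with $b_\pi\in C(G;M_{d_\pi})$. First I would check that $(b_\pi)\mapsto F_{(b_\pi)}$ is injective: by Schur orthogonality on the compact group $\cl G$ the coordinate functions $u\mapsto(u_\pi)_{ij}$ are pairwise orthogonal and nonzero in $L_2(m_{\cl G})$, so $F_{(b_\pi)}\equiv 0$ forces every $b_\pi\equiv 0$. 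Hence $\varphi(F_{(b_\pi)})=\sum_{\pi\in\Lambda}d_\pi\tr(\int\pi(g)b_\pi(g)\,m_G(dg))$ is a well--defined linear functional on $E$, and the inequality $|\varphi(F_{(b_\pi)})|\le C\,\|F_{(b_\pi)}\|_{L_1(\cl G;C(G))}$ is exactly \eqref{yy} of Lemma \ref{l4}. Extending $\varphi$ by Hahn--Banach to $L_1(\cl G;C(G))$ with norm $\le C$, and then letting all but one $b_\pi$ vanish, yields the stated identity.

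For (iii) the extra ingredient is the duality that identifies the dual of $L_1(\cl G;C(G))$ with the space of essentially bounded weak${}^*$-measurable maps $u\mapsto\mu^u\in M(G)=C(G)^*$: since $C(G)$ is separable, $G$ is metrizable, $\hat G$ countable, and $\cl G$ is a compact metrizable group, so this classical identification of the dual of a Bochner $L_1$-space applies. It represents the $\varphi$ of (ii) by a weak${}^*$-measurable $u\mapsto\mu^u$ with $\operatorname{ess\,sup}\|\mu^u\|_{M(G)}\le C$ and $\varphi(F)=\int_{\cl G}\langle\mu^u,F(u)\rangle\,m_{\cl G}(du)$. I would then feed, for fixed $\pi\in\Lambda$, indices $i,j$, and $f\in C(G)$, the element $F(u)(g)=(u_\pi)_{ij}f(g)$ into $\varphi$: the representation gives $\E((u_\pi)_{ij}\int f\,d\mu^u)$, while (ii) applied with $b_\pi(g)=f(g)E_{ji}$ ($E_{ji}$ a matrix unit) and $\tr(\pi(g)E_{ji})=\pi(g)_{ij}$ gives $\int f(g)\pi(g)_{ij}\,m_G(dg)$, which by \eqref{phr2} is $\hat f(\ovl\pi)_{ij}$. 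Collecting the $(i,j)$ entries produces $\E(u_\pi\int f\,d\mu^u)=\int f(g)\pi(g)\,m_G(dg)=\hat f(\ovl\pi)$, the asserted equality of matrix--valued measures.

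The routine parts are the two Hahn--Banach extensions and the conjugate/transpose bookkeeping in (i). The step I expect to be the main obstacle — or at least the one genuinely requiring care — is the dual--space identification used in (iii): it is standard but it does need the separability of $C(G)$ (equivalently, metrizability of $\cl G$) for the weak${}^*$-measurability and the ``almost everywhere'' clauses to be meaningful, which is precisely why (iii) carries that hypothesis. A secondary but necessary point, inside (ii), is the well--definedness of $\varphi$ on $E$, which is exactly where the Peter--Weyl/Schur orthogonality of the coordinate functions on $\cl G$ is invoked.
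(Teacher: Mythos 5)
Your proposal is correct and follows essentially the same route as the paper: (i) and (ii) via Hahn--Banach (using the Sidon definition for (i) and Lemma~\ref{l4} for (ii)), and (iii) via the identification of $L_1(\cl G;C(G))^*$ with bounded weak*-measurable $M(G)$-valued maps, tested on rank-one elements $F(u)(g)=(u_\pi)_{ij}f(g)$. You merely spell out details that the paper's terse proof leaves implicit — the transpose/conjugate bookkeeping in (i), the Schur-orthogonality argument for well-definedness of $\varphi$ in (ii), and the explicit matrix-unit computation in (iii) — but the underlying argument is the same.
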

\begin{proof} Both (i) and (ii) are immediate consequences
of Hahn-Banach: For (i) we use the definition of Sidon sets
and for (ii) we use Lemma \ref{l4}. To check (iii),
as explained in the next remark,   we note that 
$\varphi\in L_1(  {\cl G}; C(G))^*$ defines a $\mu^{u}$ such that
$\varphi( f({u}) h(g))= \EE(f({u})  \int h(g)\mu^{u}(dg)   )$ (here $f\in L_1(\cl G)$
$h\in C(G)$),
with ${\rm ess}\sup \|{\mu^{u}}\|_{M(G)}=\|\varphi\|$.
Then
(ii) can be rephrased as saying that the action
of the $d_\pi\times d_\pi$-matrix
(with  entries in $M(G)$)
 $\E (  {{u}_\pi} \mu^{u})$  on an arbitrary $b_\pi\in C(G;M_{d_\pi}) $
 coincides with that of $ {\pi(g)}  m_G.$
 Then (iii) becomes clear.
\end{proof} 
\begin{rem}[On the dual of $L_1(  {\cl G}; C(G))$]
In the present paragraph $({\cl G},m_{\cl G})$ can be any probability space.
It is a well known fact that
$L_1(  {\cl G}; C(G))$ is the projective tensor product
of $L_1(  {\cl G})$ and $C(G)$, so that its dual can be identified isometrically
to the space $B(C(G), L_\infty({\cl G}))$ of bounded linear maps
from $ C(G)$ to $ L_\infty({\cl G})$. Explicitly,
 to any linear form
  $\varphi\in L_1(  {\cl G}; C(G))^*$   we naturally associate a bounded
  linear map $T_\varphi:\ C(G) \to L_\infty({\cl G}) $ with $\|T_\varphi\|=\|\varphi\|$
   such that
  $\varphi(f\otimes x)=  \int (T_\varphi(f)) (\omega) x(\omega) m_{\cl G}(d\omega)$
  for any $f\in C(G), x\in L_1({\cl G})$.\\
 Assume $C(G)$ separable. 
Then $\hat G$ is countable and $L_1  ({\cl G})$ is also separable.
Let $D$ be a dense countable subset of $C(G)$, and
let $V$ be its linear span. Then
any $\xi \in C(G)^*$ is determined by its values on $D$,
 and also (by linearity) by its values on $V$. Clearly we can find a measurable
  subset $\Omega_0\subset \cl G$ with full measure 
  on which all the maps  $\omega \mapsto |(T_\varphi(f)) (\omega)|$
  are bounded by $\|T_\varphi\| \|f\|$
  for any $f\in D$, and such that  $f\mapsto T_\varphi(f)(\omega)$
  extends to a  linear form of norm $\le \|T_\varphi\|$ on $C(G)$ (for this
  one way is to consider linearity over the rationals). This allows us to define
  on $\Omega_0$ a  function $\omega\mapsto \mu^\omega\in M(G)$ 
  bounded by $\|T_\varphi\|$ such that
  $\omega\mapsto \mu^\omega(f)=\int f(g) \mu^\omega(dg)$ is  measurable  for any $f\in D$ and hence for any $f\in C(G)$ (this is what we mean by ``weak* measurability") 
  with  $\sup_{\Omega_0} \|\mu^\omega\|\le \|T_\varphi\|$, that represents 
  $\varphi$ in the sense that for a.a. $\omega$
  \begin{equation}\label{zz} \int f(g) \mu^\omega(dg) =(T_\varphi(f)) (\omega).\end{equation}
  We denote by ${\cl L}_\infty({\cl G}; M(G))$ the space of
  all equivalence classes (modulo equality a.e.)
  of  bounded weak* measurable functions $\omega\mapsto \mu^\omega\in M(G)$ equipped with the norm ${\rm ess}\sup_{\omega} \|\mu^\omega\|$.
  Conversely, for any such $\omega\mapsto \mu^\omega\in M(G)$
  we can associate a bounded linear map $T:\ C(G)\to L_\infty({\cl G})$ with $\|T\|\le {\rm ess}\sup_{\omega} \|\mu^\omega\|$  that takes $f\in C(G)$ to the function $\omega\mapsto \int f(g)\mu^\omega(dg)$.
  Thus we obtain an isometric isomorphism
  between $B(C(G), L_\infty({\cl G}))$ and ${\cl L}_\infty({\cl G}; M(G))$.\\
  The preceding discussion shows that
  ${\cl L}_\infty({\cl G}; M(G))$ can be identified isometrically to
  the space $L_1(  {\cl G}; C(G))^*$.
\end{rem}
We now deduce Rider's version of Drury's Theorem :
\begin{thm}\label{t2} Let $G$ be any compact group.
Let $\Lambda\subset \hat G$ be a randomly Sidon set with constant $C$.
For any $0<\vp<1$ there is a measure $\mu_\vp\in M(G)$
  such that
  \begin{equation}\label{phr4}\sup\n_{\pi \in \Lambda}\|\hat \mu_\vp( \pi)-I\|\le \vp  \ \forall \pi \in \Lambda,\quad \sup\n_{\pi\not\in \Lambda}\|\hat \mu_\vp( \pi)\|\le \vp  \text{  and }
  \|\mu_\vp\|\le w(\vp)\end{equation}
  where $w(\vp)$ depends only on $\vp$ and $C$.\\
  More generally, for any $z\in \cl G$
  (or merely for any $z\in \prod\n_{\pi\in \Lambda} U(d_\pi)$), there is
 $ \mu_\vp^z\in M(G)$ such that
  $$\sup\n_{\pi \in \Lambda}\|\hat {\mu_\vp^z}( \pi)-z_\pi\|\le \vp  \ \forall \pi \in \Lambda,\quad \sup\n_{\pi\not\in \Lambda}\|\hat{ \mu_\vp^z}( \pi)\|\le \vp  \text{  and }
  \|\mu_\vp^z\|\le w(\vp).$$
\end{thm}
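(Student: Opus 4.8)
The plan is to show that a randomly Sidon set $\Lambda$ is a so-called $\vp$-peak set with constant depending only on $\vp$ and $C$, and then to conclude via a transplantation argument based on Corollary~\ref{c1}. First I would use Lemma~\ref{l3}(iii): since $\Lambda$ is randomly Sidon, there is a bounded weak* measurable function $u\mapsto\mu^u\in M(G)$ with $\sup_u\|\mu^u\|\le C$ and $\E(u_\pi\mu^u)=\pi\, m_G$ for every $\pi\in\Lambda$ (as an equality of $M(G)$-valued matrices). The point of this identity is that averaging the translates $\mu^u$ against the coordinate unitaries $u_\pi$ reproduces, on the set $\Lambda$, exactly the Fourier behaviour of the ``fundamental'' Sidon set on the product group $\cl G=\prod_{\pi\in\hat G}U(d_\pi)$.

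Next I would bring in the spectral gap. By Corollary~\ref{c1}, applied to the collection $(d_\pi)_{\pi\in\Lambda}$ and the product group $\mathcal{H}=\prod_{\pi\in\Lambda}U(d_\pi)$, for any $0<\eta<1$ there is a measure $\theta_\eta\in M(\mathcal{H})$ with $\hat\theta_\eta(\sigma_{d_\pi})=I$ for all $\pi\in\Lambda$, with $\|\hat\theta_\eta(\rho)\|\le\eta$ for all irreps $\rho$ of $\mathcal{H}$ not of this form, and with $\|\theta_\eta\|\le w_0(\eta)$. The idea is then to form the ``transplanted'' measure on $G$: roughly,
$$
\mu_\vp \;=\; \int_{\mathcal H} \mu^{u}\ \theta_\eta(du),
$$
where $u\in\mathcal H$ is plugged into the $\Lambda$-coordinates of the argument of $\mu^{(\cdot)}$ (the other coordinates of the ambient $\cl G$ being integrated out by the weak* measurability setup). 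For $\pi\in\Lambda$ one computes $\hat\mu_\vp(\pi)=\int_{\mathcal H} \overline{u_\pi}^{\,t}\,\widehat{\mu^u}(\pi)\,\theta_\eta(du)$; using the identity $\E(u_\pi\mu^u)=\pi m_G$ from Lemma~\ref{l3}(iii) together with $\hat\theta_\eta(\sigma_{d_\pi})=I$, this collapses to (a transpose of) the identity matrix, giving $\|\hat\mu_\vp(\pi)-I\|\le\vp$ once $\eta$ is chosen small in terms of $\vp$. For $\pi\notin\Lambda$, the entries of $\hat\mu_\vp(\pi)$ are paired against matrix coefficients of $u\mapsto u_\pi$, which as a function on $\cl G$ (restricted to $\mathcal H$, after integrating out irrelevant coordinates) lives in a representation of $\mathcal H$ that is orthogonal to all the $\sigma_{d_\rho}$, $\rho\in\Lambda$ — so the spectral gap bound $\eta$ on $\theta_\eta$ forces $\|\hat\mu_\vp(\pi)\|\le\vp$. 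Finally $\|\mu_\vp\|\le \|\theta_\eta\|\cdot\sup_u\|\mu^u\|\le C\,w_0(\eta)=:w(\vp)$, which depends only on $\vp$ and $C$.

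For the ``more generally'' clause with a prescribed $z\in\cl G$, I would simply replace $\mu^u$ by $\mu^{z\cdot u}$ (multiplying coordinatewise in $\cl G$) before integrating against $\theta_\eta$, so that the target on $\Lambda$ becomes $z_\pi$ instead of $I$; all the estimates go through verbatim since left translation on $\cl G$ preserves $m_{\cl G}$ and the norms involved, and the bound $w(\vp)$ is unchanged.

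The main obstacle I expect is making the transplantation rigorous at the level of the measurable family: one must justify that the integral $\int_{\mathcal H}\mu^{u}\,\theta_\eta(du)$ makes sense as an element of $M(G)$ (a Bochner/Pettis-type integral of the weak* measurable, uniformly bounded function from Lemma~\ref{l3}(iii) against a finite measure), and that one may legitimately commute this integration with the Fourier transform and with the $L_2\times L_2\to A(G)$ duality \eqref{25}. The index bookkeeping — keeping track of which coordinates of $\cl G$ are driven by $u\in\mathcal H$ and which are averaged against $m_{\cl G}$, and checking that for $\pi\notin\Lambda$ the relevant function on $\mathcal H$ really is annihilated by $\theta_\eta$ up to $\eta$ — is the delicate but ultimately routine heart of the argument; it mirrors Drury's classical transplantation trick adapted to the non-Abelian setting, exactly as anticipated in Remark~\ref{diff}.
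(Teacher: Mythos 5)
There is a genuine gap, and it is exactly the crucial one that the paper's ``Drury--Rider trick'' is designed to close. You set $\mu_\vp=\int\mu^{u}\,\theta_\eta(du)$ and then try to use the Fourier coefficients $\hat\theta_\eta(\rho)$ to control $\hat\mu_\vp(\pi)$. But the only thing you actually know about the family $u\mapsto\mu^{u}$ is its \emph{correlation with $u_\pi$ under $m_{\cl G}$}, namely $\E(u_\pi\mu^{u})=\pi\,m_G$ for $\pi\in\Lambda$. The Fourier coefficient $\hat\mu_\vp(\pi)$ is $\int_{\mathcal H}\hat{\mu^{u}}(\pi)\,\theta_\eta(du)$ (no $\overline{u_\pi}^{\,t}$ factor appears, contrary to what you write), and to exploit $\hat\theta_\eta$ you would need to know the Fourier expansion of the \emph{function} $u\mapsto\hat{\mu^{u}}(\pi)$ on $\mathcal H$. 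A priori this is just a bounded measurable $M_{d_\pi}$-valued function; its Fourier series need not converge in $A(\mathcal H)$, and nothing forces it to live in the span of the $\sigma_{d_\rho}$'s. So neither your estimate for $\pi\in\Lambda$ (``this collapses to the identity'') nor your estimate for $\pi\notin\Lambda$ (``lives in a representation orthogonal to all the $\sigma_{d_\rho}$'') is justified. The too-good bound $\|\mu_\vp\|\le C\,w_0(\eta)$ is another symptom: the correct bound involves $C^2$.

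The missing idea is to first \emph{smooth} the family by the convolution
$$
\Phi^{u}=\int_{\cl G}\mu^{uu'}\ast\mu^{u'^{-1}}\,m_{\cl G}(du'),
$$
before integrating against the spectral-gap measure. This $\Phi^{u}$ still satisfies $\E(u_\pi\Phi^{u})=\pi\,m_G$ (because $(\pi m_G)\ast(\pi m_G)=\pi m_G$), but now, for every $\rho\in\hat G$, the matrix-valued map $u\mapsto\hat{\Phi^{u}}(\rho)$ is a convolution on $\cl G$ of two $L_\infty(\cl G;M_{d_\rho})$ functions of norm $\le C$, hence lies in $A(\cl G;M_{d_\rho})$ with the right control (this is exactly what \eqref{22}--\eqref{24} are set up to feed). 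One then writes $\nu=(\sum_{\pi\in\Lambda}d_\pi\tr u_\pi)m_{\cl G}+\nu'$ and pairs $\nu'$ against these $A(\cl G)$ coefficients via \eqref{24} to get the $\vp$-smallness, while the $m_{\cl G}$ part reproduces $z_\pi$ on $\Lambda$ and vanishes off $\Lambda$. Without the convolution step, the transplantation ``mirroring Drury's trick'' that you gesture at has no mechanism to make the spectral gap of $\theta_\eta$ bite on the arbitrary bounded family $\mu^{u}$; this is precisely the non-Abelian difficulty flagged in Remark \ref{diff}.
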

\begin{proof} We have all the ingredients to reproduce the 
Drury-Rider trick. To avoid all  irrelevant convergence 
and/or measurability issues,  we assume that $\Lambda$
is finite and that $C(G)$ is separable.
 It is easy to pass from the finite case to the general one
by a simple compactness argument (in the unit ball of $M(G)$
equipped with the weak* topology).
 Let $ \mu^{u}$ be as in
  Lemma \ref{l3} (iii).
 Let $\Lambda'\subset \hat{\cl G}$ be  the set formed
   by the coordinates $\{{u}_\pi\mid \pi \in \Lambda\}$.
   Note that here we abuse the notation: we still denote
   simply  by ${u}_\pi$
    the irreducible representation  ${u}\mapsto {u}_\pi$ on $\cl G$.
   
By Corollary \ref{c1} there is $\nu\in M(\cl G)$
with $\| \nu \|\le   w(\vp)$ such that $\hat \nu({u}_\pi)=
\int   \ovl{{u}_\pi}\nu (d{u})= I$ for $\pi\in   \Lambda$ and
$\|\hat \nu( r)\|=\|\int \bar r({u})\nu (d{u})\|\le \vp$ for any representation
$r \not\in \Lambda'$.
Let
$$\Phi^{u}=  \int  \mu^{{u}{u}'} \ast\mu^{{u}'^{-1}} m_{\cl G}(d{u}')\in {\cl L}_\infty( {\cl G} ; M(G)) .$$
Denoting $\bar z=(\ovl{z_\pi}  ) \in \cl G$, we then define
$$   \mu_\vp^z=   \int  \Phi^{{\bar z}{u} } \nu(d{u}).$$
Note
$$\| \mu_\vp^z \|\le C^2 w(\vp).$$
A simple verification  (using $({\pi}  m_G)\ast({\pi}  m_G)={\pi}  m_G$)
shows that (iii) in Lemma \ref{l3} is preserved, i.e. we have
 $$\E(  {{u}_\pi} \Phi^{u})=  {\pi}  m_G,$$
 and hence for each fixed $z\in \cl G$
 $$ \ovl{z_\pi} \E(  {{u}_\pi} \Phi^{{\bar z}u})=\E(  {({{\bar z}u)}_\pi} \Phi^{{\bar z}u})=  {\pi}  m_G.$$
Therefore    $$ \E(  {{u}_\pi} \Phi^{{\bar z}u})={}^t{z_{\pi}}   {\pi}  m_G.$$
More explicitly, for any fixed $f\in C(G)$ if we denote
$\varphi_f({u}) = \int f (g)\Phi^{u}(dg)$ we have
 \begin{equation}\label{26} \forall \pi\in\Lambda\quad\E(  {{u}_\pi}\varphi_f({{\bar z}u}) )=
{}^t{z_{\pi}}  \hat f(\bar\pi),\end{equation}
and hence taking the trace of both sides
\begin{equation}\label{26b} \forall \pi\in\Lambda\quad\E( \tr( {{u}_\pi})\varphi_f({{\bar z}u}) )=\tr(
{}^t{z_{\pi}}  \hat f(\bar\pi)).\end{equation}
By definition of 
  $ \mu^z_\vp$
 \begin{equation}\label{27}   \int f d\mu^z_\vp =\int \varphi_f ({\bar z}u)\nu(du).\end{equation}
 More generally, we can extend the definition of $\varphi_f$ to any 
 matrix-valued $f\in C(G;M_d)$: we simply set again
 $$\varphi_f ({u}) = \int f (g)\Phi^{u}(dg).$$
Let $\rho\in \hat G$.  Note that
$\varphi_{\bar \rho}= \hat{\Phi^{u}}(\rho)$.
Since $\hat{\Phi^{u}}(\rho)=  \int 
\hat{\mu^{{u}{u}'} }(\rho)
   \hat{\mu^{{u}'^{-1}} }(\rho) m_{\cl G}(d{u}')$ and ${\rm ess}\sup_{u} \|\mu^{{u}}\|\le C$, 
   the matrix valued function ${u}\mapsto \varphi_{ \rho}({u})=\hat{\Phi^{u}}(\rho)$
    (being the convolution on ${\cl G} $ of two
    $M_{d_\rho}$-valued  functions bounded by $C$)
  has its coefficients in the space of absolutely convergent Fourier series $A({\cl G} )$, so that we can apply \eqref{24} (with $\cl G$ in place of $G$)  to it. \\
  Consider the ``pseudo-measure" $\nu'$ on $\cl G$ defined a priori by its   formal Fourier expansion 
  $$\nu' = 
\sum\nolimits_{r \not\in \Lambda'} d_r \tr( {}^t\hat \nu(  r)   r ).$$
Since we assume that $\Lambda$ is finite $\nu'\in M(G)$, and 
since $\hat \nu(\pi)=I$ when $\pi\in \Lambda$ we have
\begin{equation}\label{ba} \nu = (\sum\n_{\pi\in \Lambda} d_\pi \tr( {u_\pi} ))  m_{\cl G}+\nu'.\end{equation}
Recall that by our choice of 
$\nu$ we have
$\sup\n_{r\in \hat {\cl G} } \|\hat {\nu'}(r)\|\le \vp.$
By \eqref{24} we have for any $\rho\in \hat G$
\begin{equation}\label{abc} 
\| \int \varphi_{ \rho}({\bar z}{u}) \nu'(d{u}) \|\le 
C^2\sup_{r\in \hat {\cl G} } \|\hat{\nu'}(r)\|\le C^2 \vp.
\end{equation}
 We claim that    $$\forall \pi\in \Lambda\quad  \hat{ \mu^z_\vp}( \pi)- z_\pi = \int    {\varphi_{ \pi}}({\bar z}{u}) d\nu'({u}) $$
   and
   $$\forall \rho\not\in \Lambda\quad  \hat{ \mu^z_\vp}( \rho) = \int    {\varphi_{ \rho}} ({\bar z}{u}) d\nu'({u}) .$$
From this claim and \eqref{abc} we obtain  the conclusion, except that we obtain
it with $(C^2\vp, C^2 w(\vp))$ in place of $(\vp,  w(\vp))$.\\
Thus it only  remains to justify the claim.
By \eqref{27}, \eqref{ba} and \eqref{26b} we have for any $f\in C(G)$
\begin{equation}\label{xy} \int fd{ \mu^z_\vp} -\int    {\varphi_{f}} ({\bar z}{u}) d\nu'({u}) 
=   \int    {\varphi_{f}} ({\bar z}{u})
   (\sum\n_{\pi\in \Lambda} d_\pi \tr( {u_\pi} ))dm_{\cl G}({u}) =
   \sum\n_{\pi\in \Lambda} d_\pi \tr ({}^t{z_{\pi}}  \hat{f}(\bar\pi)).
   \end{equation}
   Consider now the case
$f= \ovl {\rho_{ij}} , 1\le i,j\le d_\rho$. We have
$\hat {f }(\bar\pi)=0$ if $\rho\not=\pi  $ and  and  $\hat {f }(\bar\pi)=d_\pi^{-1} e_{ij}$ if $\rho=\pi  $. Therefore we find
$$ \sum\n_{\pi\in \Lambda} d_\pi \tr ({}^t{z_{\pi}}  \hat{f}(\bar\pi))= 1_{\rho\in \Lambda} ({z_{\pi}} )_{ij},$$
which by \eqref{xy} implies our claim.
\end{proof}
\begin{cor}[Rider, circa 1975, unpublished]\label{c2} The union of two Sidon sets is a Sidon set.
\end{cor}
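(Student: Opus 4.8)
The plan is to obtain this as Rider's version of Drury's union theorem, the one nontrivial input being the interpolation/peaking property of Sidon sets supplied by Theorem~\ref{t2} (which applies here since a Sidon set is randomly Sidon with the same constant). Let $\Lambda_1,\Lambda_2\subset\hat G$ be Sidon with constants $C_1,C_2$ and put $\Lambda=\Lambda_1\cup\Lambda_2$. By the explicit form of the definition of a Sidon set it suffices to produce a constant $C=C(C_1,C_2)$ such that $\|f\|_{A(G)}\le C\|f\|_{\infty}$ for every $f\in C(G)$ whose Fourier transform is supported in a finite subset of $\Lambda$; write $a_\pi=\widehat f(\pi)$, so $\|f\|_{A(G)}=\sum_{\pi\in\Lambda}d_\pi\tr|a_\pi|$. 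Throughout I will use that $A(G)$ embeds contractively in $C(G)$ (from $|\tr(\pi(x)b)|\le\tr|b|$ for the unitary $\pi(x)$), that $\|\mu\ast f\|_\infty\le\|\mu\|_{M(G)}\|f\|_\infty$, and the factorization $\widehat{\mu\ast f}=\widehat\mu\,\widehat f$ of \eqref{phr}.

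First I would fix $\vp>0$ (to be pinned down at the very end in terms of $C_1,C_2$ only) and, by Theorem~\ref{t2} applied to $\Lambda_1$, choose $\mu_1\in M(G)$ with $\|\mu_1\|_{M(G)}\le w(\vp)$, $\|\widehat{\mu_1}(\pi)-I\|\le\vp$ for $\pi\in\Lambda_1$, and $\|\widehat{\mu_1}(\pi)\|\le\vp$ for $\pi\notin\Lambda_1$. Then $\widehat{\mu_1\ast f}(\pi)=\widehat{\mu_1}(\pi)a_\pi$. Let $g_1$ be the finite partial sum of $f$ carried by $\Lambda_1$ (i.e.\ $\widehat{g_1}(\pi)=a_\pi$ for $\pi\in\Lambda_1$ and $0$ otherwise) and set $r_1=\mu_1\ast f-g_1$. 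The Fourier transform of $r_1$ is $(\widehat{\mu_1}(\pi)-I)a_\pi$ on $\Lambda_1$, $\widehat{\mu_1}(\pi)a_\pi$ on $\Lambda_2\setminus\Lambda_1$, and $0$ off $\Lambda$, so using $\tr|AB|\le\|A\|_\infty\tr|B|$ and the disjointness $\Lambda=\Lambda_1\sqcup(\Lambda_2\setminus\Lambda_1)$,
\[\|r_1\|_{A(G)}\le\vp\Big(\sum_{\pi\in\Lambda_1}d_\pi\tr|a_\pi|+\sum_{\pi\in\Lambda_2\setminus\Lambda_1}d_\pi\tr|a_\pi|\Big)=\vp\,\|f\|_{A(G)}.\]
Hence $\|r_1\|_\infty\le\vp\|f\|_{A(G)}$, so $\|g_1\|_\infty\le\|\mu_1\ast f\|_\infty+\|r_1\|_\infty\le w(\vp)\|f\|_\infty+\vp\|f\|_{A(G)}$, and since $g_1$ has spectrum in the Sidon set $\Lambda_1$,
\[\sum_{\pi\in\Lambda_1}d_\pi\tr|a_\pi|=\|g_1\|_{A(G)}\le C_1\|g_1\|_\infty\le C_1w(\vp)\|f\|_\infty+C_1\vp\,\|f\|_{A(G)}.\]
Running the identical argument with a measure $\mu_2$ peaking on $\Lambda_2$ gives the companion bound for $\sum_{\pi\in\Lambda_2}d_\pi\tr|a_\pi|$.

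Adding these two estimates and using $\Lambda=\Lambda_1\cup\Lambda_2$ yields $\|f\|_{A(G)}\le(C_1+C_2)w(\vp)\|f\|_\infty+(C_1+C_2)\vp\,\|f\|_{A(G)}$. Because $\widehat f$ is finitely supported, $\|f\|_{A(G)}<\infty$, so choosing $\vp=\frac{1}{2(C_1+C_2)}$ lets me absorb the last term and conclude $\|f\|_{A(G)}\le 2(C_1+C_2)\,w(\vp)\|f\|_\infty$, a bound depending only on $C_1,C_2$; thus $\Lambda_1\cup\Lambda_2$ is Sidon. I expect the only delicate point to be precisely this final absorption: it goes through only because Theorem~\ref{t2} allows the defect $\vp$ of the interpolating measure to be taken arbitrarily small while $w(\vp)$ stays finite. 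With merely the $(\d,\g)$-isolation of Definition~\ref{d1bis} — a fixed gap — the coefficient of $\|f\|_{A(G)}$ on the right could not be pushed below $1$ and the scheme would collapse, which is exactly why, as noted in Remark~\ref{diff}, Corollary~\ref{c2} is an easy consequence of the substantial Theorem~\ref{t1} (via Corollary~\ref{c1} and Theorem~\ref{t2}).
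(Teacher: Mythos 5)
Your proof is correct and follows essentially the same route as the paper's: both apply the $\vp$-peaking measure from Theorem~\ref{t2} to each $\Lambda_j$, use the Sidon property of $\Lambda_j$ on the $\Lambda_j$-part of $\mu_j\ast f$, sum the two estimates, and absorb the $\vp\|f\|_{A(G)}$ error by taking $\vp$ small. The only cosmetic differences are that the paper first reduces to disjoint $\Lambda_1,\Lambda_2$ and keeps the $\Lambda_1$-defect inside a $(1-\vp)$ factor, whereas you fold it into the remainder $r_1$ and dispense with the disjointness reduction; the resulting constants differ trivially.
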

\begin{proof} Let $\Lambda\subset \hat G$ be a Sidon set.
In the situation of Theorem \ref{t2}, 
for any $f\in C(G)$
we have by the triangle inequality
$\|f\ast \mu_\vp\|_\infty\ge \|\sum\nolimits_{\pi \in \Lambda} d_\pi \tr( {}^t\hat{(f\ast \mu_\vp)}(\pi) \pi  \|_\infty  - \|\sum\nolimits_{\pi\not \in \Lambda} d_\pi \tr( {}^t\hat{(f\ast \mu_\vp)}(\pi) \pi  \|_\infty$ and hence
$$w(\vp) \|f\|_\infty\ge
\|f\ast \mu_\vp\|_\infty \ge ((1-\vp)/C)\sum\nolimits_{\pi \in \Lambda} d_\pi \tr| \hat f (\pi)| -\vp \|f\|_{A(G)}.$$
 Let $\Lambda_j\subset \hat G$ be two disjoint Sidon sets
 with Sidon constants $C_j$ ($j=1,2$). 
Let $f=f_1+f_2\in C(G)$ be a function with $\hat f_j$ supported in  $\Lambda_j$.
By the preceding inequality
$$w_1(\vp) \|f\|_\infty 
 \ge (1-\vp)C^{-1}_1\sum\nolimits_{\pi \in \Lambda_1} d_\pi \tr| \hat f (\pi)| -\vp \|f_2\|_{A(G)},$$
 $$w_2(\vp) \|f\|_\infty 
 \ge (1-\vp)C^{-1}_2\sum\nolimits_{\pi \in \Lambda_2} d_\pi \tr| \hat f (\pi)| -\vp \|f_1\|_{A(G)},$$
 and hence summing both
 $$(w_1(\vp)+w_2(\vp)) \|f\|_\infty
 \ge ((1-\vp)\min\{ C^{-1}_1,C^{-1}_2\} -\vp) (\|f_1\|_{A(G)}+ \|f_2\|_{A(G)}).$$
 Then if we choose $\vp$ small enough so that 
 $C_\vp=((1-\vp)\min\{ C^{-1}_1,C^{-1}_2\} -\vp)>0$ we find
 that $\Lambda_1\cup\Lambda_2$ is Sidon with constant at most
 $(w_1(\vp)+w_2(\vp)) C_\vp^{-1}$.
\end{proof}
\begin{cor}[Rider, circa 1975, unpublished]\label{c3} Any randomly Sidon set  is a Sidon set.
\end{cor}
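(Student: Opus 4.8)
The plan is to deduce Corollary~\ref{c3} from Theorem~\ref{t2} by a de-randomization of the inequality defining a randomly Sidon set. Let $\Lambda\subset\hat G$ be randomly Sidon with constant $C$. By the (explicit form of the) definition of a Sidon set it suffices to bound $\|f\|_{A(G)}$ by a fixed multiple of $\|f\|_{C(G)}$ for every trigonometric polynomial $f$ with $\hat f$ supported in $\Lambda$, equivalently for every finitely supported family $(a_\pi)_{\pi\in\Lambda}$. Write $a_\pi={}^t\hat f(\pi)$, so that $f(g)=\sum_{\pi\in\Lambda}d_\pi\tr(\pi(g)a_\pi)$ and $\|f\|_{A(G)}=\sum_{\pi\in\Lambda}d_\pi\tr|a_\pi|$. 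For $u\in\cl G$ set
$$f_u(g)=\sum\nolimits_{\pi\in\Lambda}d_\pi\tr(u_\pi\pi(g)a_\pi).$$
The randomly Sidon hypothesis is exactly the statement $\|f\|_{A(G)}\le C\int\|f_u\|_\infty\,m_{\cl G}(du)$, so everything reduces to estimating $\int\|f_u\|_\infty\,dm_{\cl G}$ in terms of $\|f\|_\infty$.

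The heart of the argument is the \emph{pointwise} estimate, valid for each fixed $u\in\cl G$ and each $0<\vp<1$,
$$\|f_u\|_\infty\le w(\vp)\,\|f\|_\infty+\vp\,\|f\|_{A(G)},$$
where $w(\vp)$ is the function furnished by Theorem~\ref{t2}. To prove it, apply the second (``general $z$'') part of Theorem~\ref{t2} with $z_\pi={}^tu_\pi$, which is again unitary: this produces $\mu^z_\vp\in M(G)$ with $\|\mu^z_\vp\|\le w(\vp)$, with $\|\hat{\mu^z_\vp}(\pi)-{}^tu_\pi\|\le\vp$ for $\pi\in\Lambda$, and with $\|\hat{\mu^z_\vp}(\pi)\|\le\vp$ for $\pi\notin\Lambda$. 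By \eqref{phr}, $\mu^z_\vp\ast f$ has $\pi$-th Fourier coefficient $\hat{\mu^z_\vp}(\pi)\hat f(\pi)$; this vanishes off $\Lambda$ since $\hat f$ does, and at $\pi\in\Lambda$ it differs from $\widehat{f_u}(\pi)={}^tu_\pi\hat f(\pi)$ by $(\hat{\mu^z_\vp}(\pi)-{}^tu_\pi)\hat f(\pi)$, of trace norm at most $\vp\,\tr|\hat f(\pi)|$ by the inequality $\tr|XY|\le\|X\|\,\tr|Y|$. Multiplying by $d_\pi$ and summing gives $\|f_u-\mu^z_\vp\ast f\|_{A(G)}\le\vp\,\|f\|_{A(G)}$. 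Since $\|\,\cdot\,\|_\infty\le\|\,\cdot\,\|_{A(G)}$ and $\|\mu\ast f\|_\infty\le\|\mu\|_{M(G)}\|f\|_\infty$, we conclude $\|f_u\|_\infty\le\|\mu^z_\vp\ast f\|_\infty+\|f_u-\mu^z_\vp\ast f\|_{A(G)}\le w(\vp)\|f\|_\infty+\vp\|f\|_{A(G)}$.

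Integrating this scalar inequality over $u\in\cl G$ (note that the measures $\mu^z_\vp$ are never integrated, only the inequality is, so no measurability question arises) and combining with the randomly Sidon inequality yields $\|f\|_{A(G)}\le C\,w(\vp)\|f\|_\infty+C\vp\,\|f\|_{A(G)}$. Choosing any $\vp$ with $0<\vp<1/C$ and rearranging gives $\|f\|_{A(G)}\le\frac{C\,w(\vp)}{1-C\vp}\,\|f\|_{C(G)}$, so $\Lambda$ is Sidon. The only point that needs care is bookkeeping of the transpose and multiplication-order conventions used here, so as to be sure that the twist by $u_\pi$ in the randomly Sidon inequality is exactly cancelled by taking $z_\pi={}^tu_\pi$ in Theorem~\ref{t2}; once this is matched up, the two estimates above are routine, and there are no convergence issues since $f$ is a trigonometric polynomial.
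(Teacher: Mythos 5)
Your proof is correct, and it reaches the conclusion by a route that is genuinely, if mildly, different from the paper's. Both arguments run through the second (``general $z$'') part of Theorem~\ref{t2}, but you and the paper extract the Sidon inequality in complementary ways. The paper uses the measures $\mu^z_\vp$ to produce a \emph{lower} bound $\|f\ast\mu^z_\vp\|_\infty \ge |\sum_{\pi\in\Lambda} d_\pi\tr(\hat f(\pi)z_\pi)| - \vp\|f\|_{A(G)}$ (by evaluating $f\ast\mu^z_\vp$ at the identity), then takes the supremum over $z\in\prod_{\pi\in\Lambda} U(d_\pi)$ to recover $\|f\|_{A(G)}$ directly, landing on $w(\vp)\|f\|_\infty \ge (1-\vp)\|f\|_{A(G)}$ for any $\vp<1$. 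You instead use $\mu^z_\vp$ with $z_\pi={}^t u_\pi$ to produce a pointwise \emph{upper} bound $\|f_u\|_\infty\le w(\vp)\|f\|_\infty + \vp\|f\|_{A(G)}$, integrate over $u$, and then re-invoke the randomly Sidon hypothesis to close the loop, which forces $\vp<1/C$. The paper's route never needs to re-use the randomly Sidon inequality once Theorem~\ref{t2} is in hand, so it is one step shorter and places no constraint on $\vp$ other than $\vp<1$; your route is more literally a ``de-randomization'' of the randomly Sidon inequality and makes the role of the measures as approximate twisting operators explicit. The resulting constants, $(1-\vp)^{-1}w(\vp)$ vs.\ $Cw(\vp)/(1-C\vp)$, are of the same character. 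Your handling of the transpose bookkeeping ($\widehat{f_u}(\pi)={}^t u_\pi\hat f(\pi)$, so $z_\pi={}^t u_\pi$ is the right choice, and ${}^t u_\pi$ is again unitary) and your remark that no measurability of $u\mapsto\mu^z_\vp$ is needed are both correct.
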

\begin{proof} In the situation of   Theorem \ref{t2}, for any $z=(z_\pi)\in {\cl G}$
we have for any $f\in C(G)$ with $\hat f$ supported in $\Lambda$
$$w(\vp) \|f\|_\infty
\ge \|f \ast \mu^z_\vp\|_\infty \ge
|\sum\nolimits_{\pi \in \Lambda} d_\pi  \tr( \hat f(\pi) z_\pi)| -\vp \|f\|_{A(G)} $$
and hence taking the sup over $z$
$$w(\vp) \|f\|_\infty
\ge (1-\vp) \|f\|_{A(G)} .$$
Thus, for any $\vp<1$,  $\Lambda$ is Sidon with constant at most $(1-\vp)^{-1} w(\vp)$.
\end{proof}
\begin{rem}  Actually, Corollary \ref{c3} implies Corollary \ref{c2}, because it is easy to see
that randomly Sidon sets are stable under finite unions.
\end{rem}
\begin{rem}\label{phr5}
Let $\Lambda\subset \hat G$ be Sidon with constant $C$.
Assume that for all $0<\vp<1$ there is
$\mu_\vp\in M(G)$ such that \eqref{phr4} holds.
Then $\Lambda$ is peaking. Indeed, by 
 Hahn-Banach,
for any $z\in \prod\nolimits_{\pi\in \Lambda} M_{d_\pi}$
with $\sup\nolimits_{\pi \in \Lambda}\|z_\pi\|<\infty$
there is $\nu\in M(G)$ with $\|\nu\|_{M(G)}\le C\sup\nolimits_{\pi \in \Lambda}\|z_\pi\|$
such that $\hat \nu(\pi)=z_\pi$ for any $\pi \in \Lambda$. Since
$\|\hat \mu_\vp( \pi)-I\|\le \vp <1$,  $\hat \mu_\vp( \pi)$ is invertible
and $\|(\hat \mu_\vp( \pi))^{-1}\|\le (1-\vp)^{-1}$ for any $\pi \in \Lambda$.
Let $z_\pi=(\hat \mu_\vp( \pi))^{-1}$. Let $\nu$ be the measure
(given by Hahn-Banach) such that
$\|\nu\|_{M(G)}\le C(1-\vp)^{-1}$ 
and $\hat \nu(\pi)=(\hat \mu_\vp( \pi))^{-1}$ for any $\pi \in \Lambda$.
Let $\nu_\vp=\nu \ast \mu_\vp$.
Then by \eqref{phr} $\hat \nu_\vp(\pi)=1$ for $\pi \in \Lambda$
and $\|\hat \nu_\vp(\pi)\|\le \|\nu\|_{M(G)}\|\hat \mu_\vp(\pi)\|\le
C\vp(1-\vp)^{-1}$  for $\pi \not\in \Lambda$. Also
$\|  \nu_\vp \|_{M(G)}\le \|\nu \|_{M(G)} \| \mu_\vp\|_{M(G)} \le C(1-\vp)^{-1} w(\vp)$. 
This shows that $\Lambda$ is an $\vp'$-peak set
for $\vp'=C\vp(1-\vp)^{-1}$.
By Proposition \ref{p1}
this shows that $\Lambda$ is peaking.
\end{rem}

\begin{rem} In \cite{Wi}, Wilson managed to prove the union
theorem in $\hat G$ when $G$ is a \emph{connected} compact  group.
His proof uses the structure theory of continuous compact 
groups and Lie groups. Apparently, it does not 
extend to general compact groups, and does not give any quantitative estimate.
\end{rem}

\section{Gaussian and Subgaussian random Fourier series}\label{gsr}

In this section we survey (with sketches of proofs) the main results
of \cite{Pi,Pi2}.  We will take special care of Theorem \ref{t3}
because unfortunately we detected a gap and probably  an erroneous claim
made by us in \cite{Pi2} concerning that statement (see Remark \ref{err}).

All the Gaussian variables we consider are always assumed 
(implicitly) to have mean $0$.
A Gaussian random variable $g$ will be called
normalized if   $\E|g|^2=1$. We use this
for either the real valued case or the complex valued one.
We deliberately avoid the term ``normal", which  usually
implies that $\E|g|^2=2$ in the complex case. 
By a complex valued Gaussian variable, we mean
a variable of the form $g=g_1+ig_2$ 
such that $g_1,g_2$ are  independent (real valued) Gaussian
variables with the same $L_2$-norm (and hence the same distribution).

Let $(g_n)$ be an i.i.d. sequence of real (resp.  complex) 
valued normalized Gaussian variables.
Then for any nonzero real (resp.  complex)  sequence $x=(x_n)\in \ell_2$, the variable
$g=(\sum |x_n|^2)^{-1/2} \sum x_n g_n$ is 
a normalized Gaussian variable. Therefore
 \begin{equation}\label{74}\|\sum x_n g_n\|_p= \|g_1\|_p  (\sum |x_n|^2)^{1/2}.\end{equation}
 and also in the real (resp.  complex) case
 \begin{equation}\label{75}\E \exp (\sum x_n g_n)= \exp( \sum |x_n|^2/2) \quad\text{ (resp. }  \E \exp (\Re(\sum x_n g_n))= \exp( \sum |x_n|^2/2)).\end{equation}

We now turn to the behaviour of Sidon sets in $L_p$ for $p<\infty$.
In many cases the growth of
 the $L_p$-norms of a function when $p\to \infty$ is equivalent  to its exponential integrability, as in
the following elementary and well known Lemma.

We start by recalling the definition of certain Orlicz spaces.
Let $(\Omega,\P)$ be a probability space. Let $0<a<\infty$.
Let $$\forall x\ge 0\quad \psi_a(x)=\exp {x^a}-1.$$ 
We denote by ${  L_{\psi_a} } (\P)$, or simply by $L_{\psi_a}$
the space of those $f\in L_0  (\Omega,\P)$ for which there is $t>0$ such that
 ${\bb E} \exp|f/t|^a< \infty$ and we set
\[
\|f\|_{{\psi_a} }  = \inf\{t> 0\mid {\bb E}\exp|f/t|^a\le e\}.
\]
In the next two Lemmas (and Remark \ref{p42}) we recall
several well known properties of these spaces.
\begin{lem}\label{41}
Fix a number $a>0$. The following properties of a 
(real or complex) random variable $f$ are 
equivalent:
\begin{itemize}
\item[(i)] $f\in L_p$ for all $p<\infty$ and $\sup\nolimits_{p\ge 1} p^{-1/a}\|f\|_p<\infty$.
\item[(ii)]  $f\in {L_{\psi_a} }$.
\item[(iii)] There is $t>0$ such that 
$\sup\n_{c>0} \exp{(tc^a)} \P\{ |f|>c\} <\infty$. 
\item[(iv)] Let $(f_n)$ be an i.i.d. sequence of copies of $f$. Then
$$\sup\n_n (\log(n+1))^{-1/a} |f_n| <\infty \text{   a.s.  } .$$ 
\end{itemize}
Moreover,  there is a positive constant $C_a$ such that for any $f\ge 0$ we have
 \begin{equation}\label{73}
C_a^{-1} \sup\nolimits_{p\ge 1} p^{-1/a}\|f\|_p \le \|f\|_{{\psi_a} } \le C_a \sup\nolimits_{p\ge 1} 
p^{-1/a} \|f\|_p,
\end{equation}
and this still holds if we restrict the sup over $p\ge 1$ to be over all even integers.
\end{lem}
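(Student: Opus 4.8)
The plan is to prove the four equivalences as the cycle (ii)$\Rightarrow$(i)$\Rightarrow$(iii)$\Rightarrow$(iv)$\Rightarrow$(ii), arranging the two passages between (i) and (ii) so that every constant depends only on $a$; the quantitative comparison \eqref{73} then drops out for free. Two elementary facts will be used throughout: the power series expansion $\exp(x^a)=\sum_{k\ge0}x^{ak}/k!$ (equivalently $\psi_a(x)=\sum_{k\ge1}x^{ak}/k!$), and the layer-cake identity
$$\E\,\psi_a(|f|/t)=\int_1^\infty\P\{\,|f|>t(\log v)^{1/a}\,\}\,dv\qquad(t>0),$$
which follows from $\exp((x/t)^a)>v\iff x>t(\log v)^{1/a}$ for $v\ge1$. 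The only other inputs are Chebyshev's inequality (for (i)$\leftrightarrow$(iii)) and the Borel--Cantelli lemma together with Kolmogorov's zero--one law (for (iii)$\Rightarrow$(iv)$\Rightarrow$(ii)).

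For (ii)$\Rightarrow$(i) and the lower bound in \eqref{73}: from $\E\exp|f/t|^a\le e$ and $\E\exp|f/t|^a=\sum_{k\ge0}\E|f|^{ak}/(t^{ak}k!)$, comparing the $k$-th term with the whole sum gives $\|f\|_{ak}^{ak}\le e\,t^{ak}\,k!$, hence $\|f\|_{ak}\le t\,e^{1/a}k^{1/a}$ using $k!\le k^k$. For arbitrary $p\ge1$ I would take $k=\lceil p/a\rceil$ (so $ak\ge p$ and $k\le p/a+1$) and invoke monotonicity of the $L_q$-norms on the probability space to get $\|f\|_p\le\|f\|_{ak}\le C_a\,t\,p^{1/a}$; passing to the infimum over admissible $t$ yields $\sup_{p\ge1}p^{-1/a}\|f\|_p\le C_a\|f\|_{\psi_a}$. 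When $a$ is an even integer the exponents $ak$ are already even; for general $a$ the supremum over even integers remains comparable to the full supremum — replace $p$ by the least even integer in $[p,p+2)$ — which is the final assertion of the lemma.

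For (i)$\Rightarrow$(iii)$\Rightarrow$(ii) and the upper bound in \eqref{73}: writing $M=\sup_{p\ge1}p^{-1/a}\|f\|_p$, Chebyshev gives $\P\{|f|>c\}\le(Mp^{1/a}/c)^p$ for all $p\ge1$, and optimizing over $p$ (the optimum is $p\asymp(c/M)^a$, with $\P\{|f|>c\}\le1$ covering the range of small $c$) produces a constant $K=K(a)$ and $t_0=(aeM^a)^{-1}$ with $\sup_{c>0}e^{t_0c^a}\P\{|f|>c\}\le K$, i.e. (iii). Feeding the resulting bound $\P\{|f|>c\}\le Ke^{-t_0c^a}$ into the layer-cake identity gives, for any $s$ with $t_0s^a>1$,
$$\E\,\psi_a(|f|/s)\le K\!\int_1^\infty v^{-t_0s^a}\,dv=\frac{K}{t_0s^a-1},$$
which is $\le e-1$ as soon as $s\ge C_aM$; hence $f\in L_{\psi_a}$ with $\|f\|_{\psi_a}\le C_aM$ (the even-integer variant loses only an $a$-dependent factor, by rounding $p$ to a nearby even integer in the Chebyshev step).

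For (iii)$\Rightarrow$(iv)$\Rightarrow$(ii): choosing $R$ with $t_0R^a\ge2$, (iii) gives $\P\{|f_n|>R(\log(n+1))^{1/a}\}\le K(n+1)^{-2}$, which is summable, so Borel--Cantelli forces $|f_n|\le R(\log(n+1))^{1/a}$ for all large $n$ almost surely, which is (iv). Conversely, assuming (iv), the event $\{\sup_n(\log(n+1))^{-1/a}|f_n|<\infty\}$ is a tail event of the independent sequence $(f_n)$, so it has probability $0$ or $1$ by Kolmogorov's zero--one law; being $1$ by hypothesis, some $R$ satisfies $\P\{\forall n\ge1:\,|f_n|\le R(\log(n+1))^{1/a}\}>0$, and by independence positivity of this infinite product forces $\sum_{n\ge1}\P\{|f|>R(\log(n+1))^{1/a}\}<\infty$; since the integrand in the layer-cake identity (with $t=R$) is nonincreasing in $v$, it is dominated by $\P\{|f|>0\}+\sum_{n\ge1}\P\{|f|>R(\log(n+1))^{1/a}\}<\infty$, so $\E\,\psi_a(|f|/R)<\infty$ and $f\in L_{\psi_a}$. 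The argument has no conceptual obstacle: it is the classical dictionary among exponential integrability, moment growth $\|f\|_p\asymp p^{1/a}$, subgaussian-type tail decay, and the almost-sure size of i.i.d.\ copies. What demands care is keeping each constant a function of $a$ alone so that \eqref{73} is clean — in particular the Stirling-type estimate $(k!)^{1/(ak)}\asymp k^{1/a}$, the interpolation from the exponents $ak$ (or even integers) down to an arbitrary $p\ge1$, and the calibration $t_0\asymp M^{-a}$ of the tail-decay rate.
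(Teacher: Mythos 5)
Your argument is correct and complete, and it overlaps with the paper's where they treat the same steps but also takes a genuinely different route on one leg. For $(ii)\Rightarrow(i)$ you do exactly what the paper does: compare the $k$-th term of $\sum \E|f/t|^{ak}/k!$ with the total, apply $k!\le k^k$, and then pass from exponents of the form $ak$ to arbitrary $p\ge 1$ by monotonicity of $L_p$-norms on a probability space. Where you diverge is in the other direction: the paper proves $(i)\Rightarrow(ii)$ directly by plugging the moment bounds into the power series for $\E\exp|f/t|^a$ and summing a geometric series (after Stirling), whereas you go $(i)\Rightarrow(iii)\Rightarrow(ii)$, using Chebyshev plus an optimization in $p$ to derive the subexponential tail and then the layer-cake identity $\E\,\psi_a(|f|/t)=\int_1^\infty\P\{|f|>t(\log v)^{1/a}\}\,dv$ to return to exponential integrability. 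The two routes are comparably long and give constants of the same quality; your detour has the advantage that it establishes $(iii)$ with an explicit $t_0\asymp M^{-a}$ as a byproduct rather than leaving $(ii)\Leftrightarrow(iii)$ as an unproved ``elementary exercise'' (which is how the paper disposes of both $(iii)$ and $(iv)$). Your treatment of $(iii)\Rightarrow(iv)$ by Borel--Cantelli and $(iv)\Rightarrow(ii)$ by the zero--one law plus positivity of the infinite product $\prod_n(1-\P\{|f|>R(\log(n+1))^{1/a}\})$ is exactly the standard argument the paper alludes to, correctly executed; your remark on the even-integer variant (round $p$ up to the nearest even integer $\le p+2\le 3p$, losing only $3^{1/a}$) is what the paper leaves ``to the reader.'' In short: same skeleton on the moment side, a legitimate alternative via tails for the other implication, and you supply the omitted details the paper delegates.
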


\begin{proof} First observe that
the conditions $\sup\nolimits_{p\ge 1} p^{-1/a}\|f\|_p<\infty$
and $\sup\nolimits_{p\ge a} p^{-1/a}\|f\|_p<\infty$ are obviously equivalent.
Assume that $\sup\nolimits_{p\ge a} p^{-1/a}\|f\|_p \le 1$. Then
\begin{align*}
{\bb E} \exp|f/t|^a &= 1 + \sum\nolimits^\infty_1 {\bb E}|f/t|^{an} (n!)^{-1} 
\le 1 + \sum\nolimits^\infty_1 (an)^n t^{-an}(n!)^{-1}\\
\intertext{hence by Stirling's formula for some constant $C$}
&\le 1 + C \sum\nolimits^\infty_1(an)^n t^{-an} n^{-n}e^n=
 1 + C \sum\nolimits^\infty_1(at^{-a}e)^n
\end{align*}
from which it becomes clear (since $1<e$) that (i) implies (ii). Conversely, if (ii) holds we 
have a fortiori for all $n\ge 1$
\[
(n!)^{-1} \|f/t\|^{an}_{an} \le {\bb E} \exp|f/t|^a \le e
\]
and hence
\[
\|f\|_{an} \le e^{\frac1{an}}  (n!)^{\frac1{an}} t \le e^{\frac1{a}} n^{\frac1a} t = 
 (an)^{\frac1a} t(e/a)^{1/a},
\]
which gives $\|f\|_p \le   p^{1/a}t(e/a)^{1/a}$ for the values $p=an$, 
$n=1,2,\ldots$~. One can then easily interpolate (using H\"older's inequality)  to obtain (i). The equivalences of (ii) with (iii) and (iv) are
 elementary exercises.
The last 
assertion is   a simple recapitulation left to the reader.
\end{proof}
\begin{rem}\label{p42}
Let $$\|f\|_{\psi_a,\infty}=\inf\{t\mid \sup\n_{c>0}(\psi_a(c)\P(\{|f/t|>c\})\le \psi_a(1)\}.$$
In addition to (ii) $\Leftrightarrow$ (iii),
it is easy to check that  $\|\ \|_{\psi_a,\infty}$ and $\|\ \|_{\psi_a}$
are equivalent norms on $L_{\psi_a}$. This is in sharp contrast
with the case of $L_p$-spaces (when we replace $\psi_a$
by $c\mapsto c^p$) for which 
weak-$L_p$ is a strictly larger space than $L_p$.
\end{rem}

When  $\E f=0$  (in the case $a=2$) the following variant explains 
why the variables such that $ \|f\|_{ L_{\psi_2}}<\infty$ are usually called subGaussian.
Indeed, by \eqref{75} if $f$ is a normalized real valued Gaussian random variable,
then the number $sg(f)$ defined below is equal to 1 and equality
holds in \eqref{40} when s=1. 
Although our terminology
is slightly different, it is more customary to call subGaussian
any variable satisfying \eqref{40} below.

\begin{lem}\label{42} If $f$ is real valued, the following are equivalent:
\begin{itemize}
\item[(i)]   $f\in L_{\psi_2} $ and $\E f=0$.
\item[(ii)] There is  constant $s\ge 0$ such that
for any $t\in \R$
 \begin{equation}\label{40} \E \exp{tf}\le \exp {s^2 t^2/2}.\end{equation}
\end{itemize}
Moreover, assuming $\E f=0$,  $\|f\|_{{{\psi_2} }}$ is equivalent to the number
$sg(f)$ defined as the  smallest
$s\ge 0$ for which this holds.
\end{lem}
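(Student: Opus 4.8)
The plan is to prove the two implications separately and then read off the two-sided comparison of $\|f\|_{\psi_2}$ and $sg(f)$ from the constants that appear. Write $K=\|f\|_{\psi_2}$ and $s=sg(f)$ throughout, and recall that Lemma \ref{41} (with $a=2$) and Remark \ref{p42} are already available.

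For (ii) $\Rightarrow$ (i), first apply Jensen's inequality: $e^{t\,\E f}\le\E e^{tf}\le e^{s^2t^2/2}$, so $t\,\E f\le s^2t^2/2$ for every real $t$, and dividing by $t>0$, then by $t<0$, and letting $t\to0$ forces $\E f=0$. Next I would run Chernoff's bound: for $c>0$, $\P\{f>c\}\le e^{-tc}\,\E e^{tf}\le e^{-tc+s^2t^2/2}$, which optimized at $t=c/s^2$ gives $\P\{f>c\}\le e^{-c^2/2s^2}$; since \eqref{40} is unchanged under $t\mapsto-t$, the same bound holds for $-f$, hence $\P\{|f|>c\}\le 2e^{-c^2/2s^2}$. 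This is exactly condition (iii) of Lemma \ref{41} with $a=2$, so $f\in L_{\psi_2}$, and the same tail estimate bounds $\|f\|_{\psi_2}$ by a multiple of $s$ (via Remark \ref{p42}, or by integrating the tail to get $\|f\|_p\le Cs\sqrt p$ and invoking the right half of \eqref{73}); thus $K\le C s$.

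For (i) $\Rightarrow$ (ii), assume $\E f=0$ and $K<\infty$. The left half of \eqref{73}, in the even-integer form quoted there, gives an absolute constant $c_0$ with $\|f\|_{2n}\le c_0K\sqrt{2n}$, i.e. $\E f^{2n}\le(c_0K)^{2n}(2n)^n$. To turn moment growth into a bound on $\E e^{tf}$ valid for \emph{all} $t\in\R$, I would symmetrize: let $f'$ be an independent copy of $f$ and set $g=f-f'$; since $\E e^{-tf'}\ge1$ by Jensen, $\E e^{tf}\le\E e^{t(f-f')}=\E\cosh(tg)=\sum_{n\ge0}t^{2n}\,\E g^{2n}/(2n)!$, where $\E g^{2n}\le 2^{2n}\,\E f^{2n}\le(2c_0K)^{2n}(2n)^n$. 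Using the elementary inequalities $(2n)!\ge(2n)^{2n}e^{-2n}$ and $n!\le n^n$, the $n$-th summand is at most $(2c_0eK)^{2n}t^{2n}/(2^n n!)$, so the series is dominated by $\exp\!\bigl((2c_0eK)^2t^2/2\bigr)$; hence \eqref{40} holds with $s=2c_0eK$, i.e. $s\le CK$. Combining with the previous paragraph yields $C^{-1}sg(f)\le\|f\|_{\psi_2}\le C'\,sg(f)$, which is the asserted equivalence.

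The one step that really needs care is the last one: producing a \emph{genuinely subgaussian} bound on $\E e^{tf}$, uniform in $t$, rather than one that only works for small $t$ (a naive termwise bound yields a geometric-type series that diverges for large $t$). The symmetrization to the symmetric variable $g$ removes the odd moments and reduces matters to a $\cosh$-series, and the two factorial inequalities are precisely what make that series collapse to an exponential; everything else reduces to Chernoff's inequality and the equivalences already recorded in Lemma \ref{41} and Remark \ref{p42}.
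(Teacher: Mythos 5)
Your proof is correct and follows essentially the same route as the paper's: Chernoff for (ii)\,$\Rightarrow$\,(i), and for (i)\,$\Rightarrow$\,(ii) the symmetrization $F=f-f'$, the observation $\E e^{tf}\le\E e^{tF}$ via Jensen and $\E f'=0$, expansion of $\E e^{tF}$ into its even-power series, and a factorial estimate to make the series collapse to an exponential. The only cosmetic difference is the moment bookkeeping: the paper bounds $\E(F/2)^{2n}\le n!\,\E\exp(F/2)^2\le en!$ directly from the Orlicz-norm definition (using $\|F\|_{\psi_2}\le 2$), whereas you pass through \eqref{73} to get $\|f\|_{2n}\le c_0K\sqrt{2n}$ and then $\E F^{2n}\le 2^{2n}\E f^{2n}$; both yield the same conclusion with slightly different constants.
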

\begin{proof} Assume that $f\in L_{\psi_2} $ with $\|f\|_{{\psi_2}}\le 1$.
Let $f'$ be an independent copy of $f$. Let $F=f-f'$. Note that since
the distribution of  $F$ is 
  symmetric all its odd moments vanish, and hence
  $$\E \exp{xF} =1+\sum\n_{n\ge 1}  \frac{x^{2n}}{2n !} \E F^{2n}.$$
 We have $\|F\|_{\psi_2}\le  \|f\|_{\psi_2} + \|f'\|_{\psi_2}\le 2$. Therefore
 $ \E (F/2)^{2n} \le n!  \E \exp{(F /2)^2} \le  e n! $ and hence
 $$\E \exp{xF} \le 1+\sum\n_{n\ge 1}  \frac{(2x)^{2n}}{2n !} e n!\le 1+\sum\n_{n\ge 1}  \frac{(2\sqrt{e}x)^{2n}}{n !} \le \exp {(4ex^2)}. $$
 But since $t\mapsto \exp -xt$ is convex for any $x\in \R$,
 and $\E f'=0$
 we have $1=e^0 \le \E\exp -x f'$ and hence
 $\E\exp xF=\E\exp xf \E\exp - xf'\ge \E\exp xf $.
 Thus we conclude $sg(f)\le (8e)^{1/2}$. By homogeneity this
 shows $sg(f)\le (8e)^{1/2} \|f\|_{\psi_2}$.\\
 Conversely, assume $sg(f)\le 1$. Clearly \eqref{40}
 implies  $\E f=0$. Then for any $x,t>0$
 $$\P(\{ f>x\}) e^{tx}\le \E e^{tf} \le e^{x^2/2}.$$
 taking $x=t$ we find
 $\P(\{ f>t\})  \le e^{-t^2/2},$ and since $sg(-f)=sg(f)\le 1$
 we also have $\P(\{ -f>t\})  \le e^{-t^2/2}$, and hence
 $$\P(\{ |f|>t\})  \le 2 e^{-t^2/2}.$$
 Fix $c>\sqrt 2$. Let $\theta= 1/2-1/c^2$. Note $\theta>0$.
 $$\E \exp{(f/c)^2} -1 =\int_0^\infty   (2t/c^2) \exp{(t/c)^2} \P(\{ |f|>t\})   dt
 \le   \int_0^\infty   (4t/c^2)   e^{-\theta t^2}   dt= 2/\theta c^2  .$$
 Elementary calculation shows that if $c_0=({2}(e+1)(e-1)^{-1})^{1/2}$ we have
 $1+2/\theta c_0^2=e$. Thus we conclude
 $\|f\|_{\psi_2}\le c_0$.
By homogeneity, this
 shows $  \|f\|_{\psi_2} \le c_0 sg(f)$.
\end{proof}

 The next result was repeatedly used in \cite{MaPi}.
 It shows that independent random unitary matrices are dominated in a   strong sense
 by their Gaussian analogues. 
\begin{lem}\label{R72}  Let $(d_k)_{k\in I}$ be an arbitrary collection of integers.
   Let ${\bf G}=\prod_{k\in I} U(d_k)$. Let $u\mapsto u_k$
   denote the coordinates on ${\bf G}$, and $u_k(i,j)$  ($1\le i,j\le d_k$)
   the entries of $u_k$.
   Let $\{g_k(i,j)\}$ ($1\le i,j\le d_k$) be a collection of
   independent complex valued  Gaussian  random variables
  such that $\E(g_k(i,j))=0$ and $\E|g_k(i,j)|^2=1/d_k$, on a probability space
  $(\Omega,\P)$. For some $C_0>0$ there is  a positive operator
  $T: \ L_1(\Omega,\P) \to L_1({\bf G},m_{\bf G})$ with
  $ \|T: \ L_p(\Omega,\P) \to L_p({\bf G},m_{\bf G})\|\le C_0 $ for all $1\le p\le\infty$
  such that
  $$\forall k \forall  i,j\le d_k\quad T(g_k(i,j))= u_k(i,j).$$
\end{lem}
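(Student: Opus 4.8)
The plan is to realize each $u_k$ as a mildly randomized polar part of $G_k$ and to take $T$ to be a fixed scalar multiple of a conditional expectation. Recall the polar decomposition $G=V(G)\,P(G)$ of an invertible matrix, with $P(G)=(G^*G)^{1/2}\ge 0$ and $V(G)=GP(G)^{-1}$ unitary. I would first record two standard facts about a $d\times d$ matrix $G$ with i.i.d. complex Gaussian entries of variance $1/d$: since the law of $G$ is invariant under $G\mapsto WG$ ($W\in U(d)$), the law of $(V(G),P(G))$ is invariant under $(V,P)\mapsto(WV,P)$, so $V(G)$ is Haar distributed on $U(d)$ and independent of $P(G)$; and invariance under $G\mapsto WGW^*$ forces $\E\,P(G)=c_dI$ with $c_d=d^{-1}\E\|G\|_1$ ($\|\cdot\|_1$ being the sum of the singular values). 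The one quantitative ingredient is $c_*:=\inf_{d\ge 1}c_d>0$; each $c_d>0$, and $c_d\to 8/(3\pi)$ by the quarter-circle law for singular values of Ginibre matrices (or, elementarily: on $\{\|G\|_{\rm op}\le 3\}$, which has probability $\ge 1-e^{-cd}$, one has $\|G\|_1\ge\|G\|_{\rm HS}^2/3$ and $\E\|G\|_{\rm HS}^2=d$, whence $c_d\ge \tfrac14$ for all large $d$).

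Next I would choose $(\Omega,\P)$ carrying the independent Ginibre matrices $G_k$ (which supply $g_k(i,j)=(G_k)_{ij}$) together with an independent family of \emph{symmetric} real random variables $\theta_k$ with $\E\cos\theta_k=c_*/c_{d_k}\in(0,1]$ (this number lies in $(0,1]$, so such $\theta_k$ exist). Writing $V_k:=V(G_k)$, $P_k:=P(G_k)$, I set $u_k:=e^{i\theta_k}V_k\in U(d_k)$. Since $V_k$ is Haar on $U(d_k)$ and independent of $\theta_k$, each $u_k$ is Haar on $U(d_k)$; and since the pairs $(G_k,\theta_k)$ are independent over $k$, the family $(u_k)_{k\in I}$ has law $m_{\bf G}$. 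Hence $\omega\mapsto(u_k(\omega))_k$ pushes $\P$ forward to $m_{\bf G}$ and identifies $L_p({\bf G},m_{\bf G})$ isometrically with $L_p(\Omega,\mathcal A,\P)$, where $\mathcal A:=\sigma(u_k:k\in I)$, the coordinate function $u_k(i,j)$ on ${\bf G}$ corresponding to the random variable $u_k(i,j)$ on $\Omega$.

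I would then compute $\E[G_k\mid\mathcal A]$. A short check gives that $V_k,\theta_k,P_k$ are mutually independent and that, conditionally on $(\theta_k,P_k)$, the variable $u_k$ is still Haar; in particular $(\theta_k,P_k)$ is independent of $u_k$. By independence over $k$, $\E[G_k\mid\mathcal A]=\E[G_k\mid u_k]$, and writing $G_k=V_kP_k=e^{-i\theta_k}u_kP_k$,
$$\E[G_k\mid u_k]=u_k\,\E[e^{-i\theta_k}P_k\mid u_k]=u_k\,\E[e^{-i\theta_k}P_k]=u_k\,\E[e^{-i\theta_k}]\,\E P_k=u_k\cdot\frac{c_*}{c_{d_k}}\cdot c_{d_k}I=c_*\,u_k .$$
Thus $\E[g_k(i,j)\mid\mathcal A]=c_*\,u_k(i,j)$ for all $k,i,j$. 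Finally, set $T:=c_*^{-1}\,\E[\,\cdot\mid\mathcal A]$, viewed as an operator $L_p(\Omega,\P)\to L_p({\bf G},m_{\bf G})$ through the identification above: then $T$ is positive, and since conditional expectation is a contraction on every $L_p$ ($1\le p\le\infty$) and that identification is isometric, $\|T:L_p\to L_p\|\le c_*^{-1}$ for all $p$, while $T(g_k(i,j))=c_*^{-1}\cdot c_*\,u_k(i,j)=u_k(i,j)$. So one may take $C_0:=c_*^{-1}$.

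The hard part is conceptual rather than technical: the plain polar decomposition yields $\E[g_k(i,j)\mid V_k]=c_{d_k}u_k(i,j)$ with a \emph{dimension-dependent} constant $c_{d_k}\in(0,1)$, and there is no single bounded operator on $L_p({\bf G})$ that amplifies the $k$-th matrix block by $c_{d_k}^{-1}$ once the $d_k$ are allowed to vary (for $d_k=1$ already one necessarily loses a factor $2/\sqrt\pi>1$, so no contraction can work). Inserting the independent scalar phase $e^{i\theta_k}$ is exactly the device that degrades all of these constants down to the single value $c_*$, after which one harmless scalar multiplication by $c_*^{-1}$ concludes; checking $c_*>0$ is then the only genuinely quantitative step.
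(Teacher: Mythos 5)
Your proof is correct and follows essentially the same route as the paper's sketch: polar decomposition of the Ginibre matrices, a conditional expectation onto the unitary (angular) parts, and a degradation of the dimension-dependent constants $c_{d_k}=\delta_{d_k}$ to their positive infimum $c_*$. The only organizational difference is that the paper composes the conditional expectation with a separate norm-one positive operator $W$ that rescales the $k$-th block by $c_*/c_{d_k}$ (realizable by convolution with scalar rotations), whereas you build that same rescaling into the definition of $u_k$ via independent phase variables $\theta_k$; your elementary check that $c_*>0$ usefully fills in a step the paper defers to ``known results.''
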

  \begin{proof}[Sketch] 
  Let $g_k=v_k |g_k|$ be the polar decomposition of $g_k$.
  Let ${\cl E}$ be the conditional expectation with respect to $(v_k)$.
 Since
    $(v_k)$ and $(|g_k|)$ are independent random variables,
    we have ${\cl E}(g_k)=v_k\E|g_k|$.
    By known results $\E|g_k|=\d_k I$
  for some $\d_k>0$ such that $\d=\inf\n_k \d_k>0$.
  Thus ${\cl E}(g_k)=v_k \d_k$. Since $0< \d/\d_k <1$ for all $k$,
  it is easy to see there is a (positive) operator
  $W:\ L_p({\bf G},m_{\bf G})\to L_p({\bf G},m_{\bf G})$ with $\|W\|\le 1$
for any $1\le p\le\infty$,
such that $W(v_k )=(\d/\d_k) v_k $ and hence
$\d^{-1} W{\cl E}(g_k )=v_k$. Thus,
since $(u_k)$ and $(v_k)$ have the same distribution,
$T=\d^{-1}W{\cl E}$ gives us the desired operator.
  \end{proof}
  
  \begin{rem}[Matricial contraction principle]\label{90} Let $(u_k)$ and $(g_k)$ be as in Lemma \ref{R72}.
  Let $\{x_k(i,j)\mid k\ge 1, 1\le i,j\le d_k\}$ be a finitely supported family 
  in an arbitrary Banach space $B$. For any matrix $a\in M_{d_k}$
  with complex entries, we denote by $ax$ and $xa$
  the matrix products (with entries in $B$)
  By convention, we write $\tr(u_k x_k)=\sum\n_{ij} u_k(i,j)x_k(j,i)$.
  With this notation, the following ``contraction principle" holds
  $$\int \|\sum d_k \tr( a_k u_k b_k x_k)\| dm_{\bf G}
 \le \sup\n_k\|a_k\|_{M_{d_k}} \sup\n_k \|b_k\|_{M_{d_k}} 
 \int \|\sum d_k \tr(   u_k    x_k)\| dm_{\bf G}.
 $$
 Indeed, this is obvious by the translation invariance of $m_{\bf G}$
 if $a_k,b_k$ are all unitary. Then the result follows   since the unit ball of ${M_{d_k}} $ is the closed convex hull of its extreme points, namely
 its unitary elements.\\
 The same inequality   holds 
 if we replace $(u_k)$  by any sequence of variables $(z_k)$  
 such that for any unitary matrices $a_k,b_k\in U(d_k)$
 the sequences $(z_k)$  and $(a_kz_kb_k)$  have the same distribution.
 In particular this holds for the Gaussian sequence $(g_k)$.
 \end{rem}
{\bf Notation:}
Let $G$ be any compact group. We denote by
  $(g_{\pi})$   an independent family indexed by $\hat G$, defined
like this:   
$g_{\pi} $ is a random ${d_\pi}\times  {d_\pi}$-matrix 
the entries of which are independent complex Gaussian
random variables with $L_2$-norm  $=(1/d_{\pi})^{1/2}$.
All our random variables are assumed defined on a suitable probability space $(\Omega, \P)$.  
\\
In the sequel, we similarly think of   $(u_\pi)$   as   an independent family of 
unitary ${d_\pi}\times  {d_\pi}$-matrices
 indexed by $\hat G$, on the probability space $({\cl G}, m_{\cl G} )$. For simplicity
 we denote the integral on $\cl G$ by $\E$. 
 
  The following basic fact  compares the notions
 of randomly Sidon for $(g_\pi)$ and $(u_\pi)$.
   It is proved by the same truncation trick
 that was used in \cite{Pi}. See \cite[Chap.V and VI]{MaPi} for further details and more general facts.   

 \begin{lem}\label{68}    For a subset $\Lambda\subset \hat G$,
 the following are equivalent:
\begin{itemize}
\item[(i)] There is a constant $\alpha_1$ such that 
for any  finitely supported family $(a_\pi)\in \prod\n_{\pi\in \Lambda} M_{d_\pi}$ 
$$\sum\n_{\pi\in \Lambda} d_\pi \tr|a_\pi| \le \alpha_1 \E\| \sum\n_{\pi\in \Lambda}  d_\pi \tr( g_\pi  \pi a_\pi)\|_\infty.$$
\item[(ii)] There is a constant $\alpha_2$ such that 
for any  finitely supported family $(a_\pi)\in \prod\n_{\pi\in \Lambda} M_{d_\pi}$ 
$$\sum\n_{\pi\in \Lambda}  d_\pi \tr|a_\pi| \le \alpha_2 \E\| \sum\n_{\pi\in \Lambda}  d_\pi \tr( u_\pi  \pi a_\pi)\|_\infty.$$
\end{itemize}
  \end{lem}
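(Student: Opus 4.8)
I will establish the two implications separately; $(ii)\Rightarrow(i)$ is soft, while $(i)\Rightarrow(ii)$ is the truncation argument of \cite{Pi}. Write $\|(a_\pi)\|_\ast=\sum_{\pi\in\Lambda}d_\pi\tr|a_\pi|$ and, for a finitely supported family $(a_\pi)$,
$$R_g(a)=\E\Big\|\sum_{\pi\in\Lambda}d_\pi\tr(g_\pi\pi(\cdot)a_\pi)\Big\|_{C(G)},\qquad R_u(a)=\E\Big\|\sum_{\pi\in\Lambda}d_\pi\tr(u_\pi\pi(\cdot)a_\pi)\Big\|_{C(G)},$$
so that $(i)$ reads ``$\|a\|_\ast\le\alpha_1 R_g(a)$ for all $a$'' and $(ii)$ reads ``$\|a\|_\ast\le\alpha_2 R_u(a)$ for all $a$''.

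For $(ii)\Rightarrow(i)$ it is enough to prove $R_u(a)\le C_0 R_g(a)$ with $C_0$ independent of the family; $(ii)$ with constant $\alpha_2$ then forces $(i)$ with $\alpha_1=C_0\alpha_2$. This is essentially the content of Lemma \ref{R72}, but it can also be seen directly: writing the polar decomposition $g_\pi=v_\pi|g_\pi|$, the unitary part $v_\pi$ is Haar distributed (hence distributed as $u_\pi$) and independent of $|g_\pi|$, while $\E|g_\pi|=\delta_\pi I$ with $\delta:=\inf_\pi\delta_\pi>0$; conditioning on $(v_\pi)$ and applying Jensen to the convex norm yields $R_g(a)\ge\E_v\|\sum_\pi d_\pi\tr(v_\pi\delta_\pi\pi(\cdot)a_\pi)\|_{C(G)}$, and the matricial contraction principle (Remark \ref{90}) applied with the scalar contractions $\delta/\delta_\pi$ then gives $R_u(a)\le\delta^{-1}R_g(a)$.

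For $(i)\Rightarrow(ii)$ I would run the truncation trick. Fix $t>0$ and split each Gaussian matrix as $g_\pi=y_\pi+z_\pi$ with $y_\pi=g_\pi 1_{\{\|g_\pi\|\le t\}}$ and $z_\pi=g_\pi 1_{\{\|g_\pi\|>t\}}$, all norms being operator norms. Since $1_{\{\|g_\pi\|\le t\}}$ depends only on $\|g_\pi\|$, which is invariant under left and right multiplication by unitaries, the family $(y_\pi)$ still satisfies the invariance hypothesis of Remark \ref{90} and $\|y_\pi\|\le t$ pointwise; writing $y_\pi=v_\pi s_\pi$ with $v_\pi$ Haar (distributed as $u_\pi$) independent of the singular values $s_\pi$, conditioning on $(s_\pi)$, and applying Remark \ref{90} with the contractions $b_\pi=s_\pi/t$, one gets $\E\|\sum_\pi d_\pi\tr(y_\pi\pi(\cdot)a_\pi)\|_{C(G)}\le t\,R_u(a)$. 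For the tail, the trace--operator-norm H\"older inequality together with the unitarity of $\pi(x)$ gives, for every $x\in G$,
$$\Big|\sum_\pi d_\pi\tr(z_\pi\pi(x)a_\pi)\Big|\le\sum_\pi d_\pi\,\|z_\pi\|\,\tr|a_\pi|,$$
a bound independent of $x$, so that $\E\|\sum_\pi d_\pi\tr(z_\pi\pi(\cdot)a_\pi)\|_{C(G)}\le\vp(t)\,\|a\|_\ast$ with $\vp(t):=\sup_{\pi\in\hat G}\E\big(\|g_\pi\|\,1_{\{\|g_\pi\|>t\}}\big)$. Combining, $R_g(a)\le t\,R_u(a)+\vp(t)\,\|a\|_\ast$, hence by $(i)$
$$\|a\|_\ast\le\alpha_1 R_g(a)\le\alpha_1 t\,R_u(a)+\alpha_1\vp(t)\,\|a\|_\ast;$$
choosing $t=t(\alpha_1)$ so large that $\alpha_1\vp(t)\le 1/2$ and absorbing the last term yields $(ii)$ with $\alpha_2=2\alpha_1 t(\alpha_1)$.

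The step I expect to be the main obstacle is showing $\vp(t)\to 0$ as $t\to\infty$ \emph{uniformly over all $\pi\in\hat G$} (equivalently over all dimensions $d_\pi$): this uniformity is precisely what makes the absorption valid for arbitrary finitely supported families, and it rests on the standard but non-trivial fact that the operator norm of the $d\times d$ complex Gaussian matrix whose entries have $L_2$-norm $d^{-1/2}$ stays bounded in $L_2$ uniformly in $d$ (it concentrates around $2$), so that $\vp(t)\le t^{-1}\sup_\pi\E\|g_\pi\|^2\to 0$. One should also note that the asymmetry between the two directions is genuine: there is no $L_\infty$-bounded operator carrying the bounded system $(u_\pi)$ onto the unbounded system $(g_\pi)$, so $(i)\Rightarrow(ii)$ cannot be obtained by a mere pushforward, and truncation followed by absorption against hypothesis $(i)$ is essential --- exactly as in \cite{Pi,MaPi}.
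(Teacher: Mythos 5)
Your proof is correct and follows essentially the same approach as the paper's sketch: for (ii) $\Rightarrow$ (i) you reproduce the content of Lemma \ref{R72} via polar decomposition and the contraction principle, and for (i) $\Rightarrow$ (ii) you use exactly the truncation-and-absorption argument, with your direct polar decomposition of $g_\pi$ into $v_\pi s_\pi$ playing the same role as the paper's device of replacing $(g_\pi)$ by the identically distributed $(u_\pi g_\pi)$ with independent Haar $(u_\pi)$. The one ingredient you flag as the main obstacle, the uniform bound $\sup_\pi \E\|g_\pi\|^2 < \infty$ ensuring $\vp(t)\to 0$ uniformly, is indeed the same fact the paper invokes (the constant $c_4$), and your Chebyshev bound $\vp(t)\le t^{-1}\sup_\pi\E\|g_\pi\|^2$ is the standard way to make it quantitative.
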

  \begin{proof}[Sketch]  
  From Lemma \ref{R72} it is easy to deduce that
  $$  \E \| \sum\n_{ \Lambda} d_\pi \tr( u_\pi  \pi a_\pi)\|_\infty m_{\bf G}(du)
  \le C_0\E\| \sum\n_{ \Lambda} d_\pi \tr( g_\pi  \pi a_\pi)\|_\infty,$$
  and hence (ii) $ \Rightarrow$ (i).
 To check the converse, recall the well known fact that 
 $c_4=\sup \E \|g_\pi\|^2 <\infty,$
 from which it is easy to deduce by Chebyshev's inequality that there
 exists   $c_5>0$ such that
 $$\sup \E(\|g_\pi\| 1_{\{\|g_\pi\|> c_5\}}  \le (2\alpha_1)^{-1}.$$
  We may assume that the sequences $(u_\pi)$
  and $(g_\pi)$ are mutually independent, so that
    the sequences $(g_\pi)$ and $(u_\pi g_\pi)$
   have the same distribution. 
Then by the triangle inequality  and by Remark \ref{90}
  $$\E\| \sum\n_{ \Lambda} d_\pi \tr( g_\pi  \pi a_\pi)\|_\infty
  =\E\|\sum\n_{ \Lambda} d_\pi \tr(u_\pi g_\pi  \pi a_\pi )\|_\infty$$
  $$
  \le \E\| \sum\n_{ \Lambda} d_\pi \tr( u_\pi g_\pi   1_{\{\|g_\pi\|\le c_5\}} \pi a_\pi )\|_\infty
  +\E\| \sum\n_{ \Lambda} d_\pi \tr( u_\pi g_\pi 1_{\{\|g_\pi\|> c_5\}} \pi a_\pi)\|_\infty$$
  $$ \le c_5 \E\| \sum\n_{ \Lambda} d_\pi \tr( u_\pi   \pi a_\pi)\|_\infty
  + \E  \sum d_\pi \|g_\pi\| 1_{\{\|g_\pi\|> c_5\}}   \tr|a_\pi|  \qquad\qquad\qquad\ \ $$
  $$ \le c_5 \E\| \sum\n_{ \Lambda} d_\pi \tr( u_\pi   \pi a_\pi)\|_\infty
  + (2\alpha_1 )^{-1}  \sum\n_{ \Lambda} d_\pi \tr|a_\pi|  . \qquad\qquad\qquad\qquad\qquad\ 
  $$
  Using this we see that 
  (i)  implies
   $$\sum\n_{ \Lambda} d_\pi \tr|a_\pi| \le \alpha_1c_5 \E\| \sum\n_{ \Lambda} d_\pi \tr( u_\pi   \pi a_\pi)\|_\infty
  +(1/2)  \sum\n_{ \Lambda} d_\pi \tr|a_\pi|  , $$
  and hence (i) $ \Rightarrow$ (ii) with $\alpha_2\le 2\alpha_1c_5$.
    \end{proof}
     
\begin{rem}[Comparison of randomizations]\label{sud} 
Actually, Lemma \ref{68} follows from a much more general fact proved in  \cite{MaPi}.   
Let $(a_\pi)$ be a finitely supported family
indexed by $\hat G$ with $a_\pi\in M_{d_\pi}$  ($\pi\in \hat G$).
In \cite{MaPi}, the random Fourier series
$$R(x) =\sum\n_{\pi\in \hat G} d_\pi \tr(  {u}_\pi \pi(x) a_\pi)\quad (x\in G)$$
randomized by $ {u}=({u}_\pi) $ on $({\cl G}, m_{\cl G} )$
is compared to
$$\tilde  R(x) =\sum\n_{\pi\in \hat G} d_\pi \tr(g_{\pi} \pi(x) a_\pi )\quad (x\in G)$$
randomized by  $g_{\pi}$ on $(\Omega, \P)$. By
 \cite[p.97]{MaPi} there is a universal constant
 $c>0$ such that
 \begin{equation}\label{mapi}c^{-1} \E \sup_{x\in G} |\tilde R(x)  |\le \E \sup_{x\in G} |R(x)  | 
 \le c \E \sup_{x\in G} |\tilde R(x)  |.\end{equation}
 In particular, a set is randomly Sidon iff it so when we replace
 the random unitaries $({u}_\pi)$ by the Gaussian variables
 $g_{\pi} $, so we recover Lemma \ref{68}. 
\end{rem}
 \begin{rem} A similar comparison holds
for 
the random Fourier series
$$L(x) =\sum d_\pi \tr({u}_\pi a_\pi \pi(x))\quad (x\in G) \text{ and  }
 \tilde  L(x) =\sum d_\pi \tr(g_{\pi} a_\pi \pi(x))\quad (x\in G),$$
 where the randomization is on the other side of $\pi$,
 but this can be easily derived from the case of $R$ and $ \tilde  R$
 by observing that
 $$ |L(x) |=|\ovl{L(x)} | =|\sum d_\pi \tr( ({u}_\pi a_\pi \pi(x))^*) |=|\sum d_\pi \tr( {u}_\pi^* \pi(x^{-1}) a_\pi^*   ) | ,$$
 and the last series can be treated as $R(x^{-1})$ for a suitable $R$.
\end{rem}
\begin{rem}
By passing to the   series $\tilde  R$, we allow ourselves
 the use of the rich  theory of Gaussian processes. 
 We will use these ideas to prove the next statement.
 Let us briefly outline this.
 Let $f(x)=  \sum\n_{\pi\in \hat G} d_\pi \tr(  \pi(x) a_\pi )$
 so that ${}^t \hat f(\pi)=a_\pi$.
 Let $f_t(g)=f(gt)$.
    Let $$d_f(s,t)=\|  \tilde R(s)-\tilde R(t)    \|_2=\|f_s-f_t\|_2=(\sum d_\pi \tr  | (\pi(s)-  \pi(t)) {}^t\hat f(\pi)|^2)^{1/2}.$$
    The metric entropy integral
    associated to $f$ is usually defined
    as $$\int_0^\infty  (\log N_f(\vp))^{1/2} d\vp$$
    where $N_f(\vp)$ is the smallest number of open balls of $d_f$-radius $\vp$
    that suffice to cover $G$. 
        \\
 Since the measure and the distance are both (left) translation invariant,
    one checks easily that
    \begin{equation}\label{ab} {m_G(\{t\mid d_f(t,1)<\vp\})}^{-1}\le N_f(\vp)\le {m_G(\{t\mid d_f(t,1)<\vp/2\})}^{-1}.\end{equation}
Thus we may  work with the following quantity 
equivalent  to the metric entropy integral :
    $${\cl I}_2(f)=\int_0^\infty  (\log \frac{1}{m_G(\{t\mid d_f(t,1)<\vp\})})^{1/2} d\vp.$$
    The metric entropy integral was originally introduced in the subject in a 1967 paper 
    of Dudley to give new upper bounds for general Gaussian processes.
In the stationary case, Fernique showed
that the same integral is also a lower bound. The latter bound
    implies  that there is an absolute constant   $c$ such that
    \begin{equation}\label{ep4}{\cl I}_2(f)
     \le c \E\| \sum  d_\pi \tr (g_\pi  \pi {}^t\hat f(\pi))\|_\infty.\end{equation}
 A fortiori,  this implies
 Sudakov's minoration (see e.g. \cite[p.69]{Piv} or \cite{Ta2}):
 there is a numerical constant $c'$
 such that
$$\sup_{\vp>0} \vp (\log N_f(\vp))^{1/2} \le c' \E \sup_{x\in G} |\tilde R(x)  |,$$ 
 and hence
\begin{equation}\label{44} \sup_{\vp>0} \vp \left(\log \frac{1}{m_G(\{x\mid d(x,1)<\vp\}}\right)^{1/2} \le cc' \E \sup_{x\in G} |  R(x)  |.\end{equation}
\end{rem}

The next two Theorems   essentially come from \cite{Pi,Pi2}. They show that
a set is Sidon  iff it is a $\Lambda(p)$-set (in Rudin's sense \cite{Ru}) for all $p>2$ with a constant growing at most like $\sqrt p$.
\begin{thm}[Sidon versus   $\Lambda(p)$-sets] \label{t6} Let $\Lambda \subset \hat G$. The following three assertions are equivalent:
\begin{itemize}
\item[(i)] $\Lambda$ is a Sidon set.
\item[(ii)] There is a constant $C$ such that for any $f\in L_2(G)$ 
with $\hat f$
supported in $\Lambda$ we have
$$\|f\|_{{{\psi_2} }} \le C \|f\|_2.$$
\item[(ii)']  There is a constant $C$ such that for any  any finitely supported family $(a_\pi)$   ($a_\pi\in M_{d_\pi}$) we have for any $p\ge 2$
$$\|\sum\n_{\pi\in \Lambda} d_\pi \tr(\pi a_\pi ) \|_p
\le Cp^{1/2} (\sum\n_{\pi\in \Lambda} d_\pi \tr|a_\pi |^2)^{1/2}.$$
\end{itemize}
 \end{thm}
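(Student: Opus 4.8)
The plan is to establish the cycle $(ii)\Leftrightarrow(ii')$, then $(i)\Rightarrow(ii)$, and finally $(ii')\Rightarrow(i)$, which is the deep implication.

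\emph{Equivalence $(ii)\Leftrightarrow(ii')$.} This is a direct translation through Lemma~\ref{41}. Given a finitely supported family $(a_\pi)$, set $f=\sum_{\pi\in\Lambda}d_\pi\tr(\pi a_\pi)$; Parseval gives $\|f\|_2=(\sum d_\pi\tr|a_\pi|^2)^{1/2}$, and \eqref{73} (with $a=2$, the sup over $p$ being attained over even integers) identifies $\|f\|_{\psi_2}$ with $\sup_{p\ge2}p^{-1/2}\|f\|_p$ up to a universal constant, the intermediate real exponents being recovered by H\"older interpolation between $L_2$ and $L_p$. Thus $(ii')$ for finitely supported data is literally $(ii)$ for trigonometric polynomials with spectrum in $\Lambda$; to reach arbitrary $f\in L_2(G)$ with $\hat f$ supported in $\Lambda$ I would apply the inequality to the Fourier partial sums $f_N$ (for which $\|f_N\|_2\le\|f\|_2$), extract an a.e.\ convergent subsequence, and use Fatou.

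\emph{Implication $(i)\Rightarrow(ii)$.} The idea is to transfer to $f$ the (automatic) subgaussian estimate for its randomization, using the Hahn--Banach form of Sidonicity. Let $f$ be a trigonometric polynomial with spectrum in a Sidon set $\Lambda$ of constant $C$, normalized so $\|f\|_2=1$, and put $a_\pi={}^t\hat f(\pi)$. For $u\in\cl G$ take $\mu^u$ as in Lemma~\ref{l3}(i); since $\hat f$ is supported in $\Lambda$, $f\ast\mu^u$ still has spectrum in $\Lambda$ and equals $\sum_{\pi\in\Lambda}d_\pi\tr(\pi(\cdot)\,w_\pi a_\pi)$ with $(w_\pi)$ a Haar-distributed unitary family (obtained entrywise-linearly from $(u_\pi)$ by transposition), and applying Lemma~\ref{l3}(i) once more to $(w_\pi^{-1})$ yields $\nu^u$ with $\|\nu^u\|\le C$ and $(f\ast\mu^u)\ast\nu^u=f$. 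Since $\|\cdot\|_{\psi_2}$ is translation invariant and $\psi_2$ is convex, Young's inequality gives $\|f\|_{\psi_2}\le C\,\|f\ast\mu^u\|_{\psi_2}$ for \emph{every} $u$. On the other hand, for each fixed $x\in G$ the random scalar $u\mapsto(f\ast\mu^u)(x)=\sum_{\pi}d_\pi\tr(\pi(x)w_\pi a_\pi)$ is, via the operator $T$ of Lemma~\ref{R72} (applied to the Haar family $(w_\pi)$), the image of a centered complex Gaussian whose variance is $\sum_\pi d_\pi\|a_\pi\pi(x)\|_{HS}^2=\sum_\pi d_\pi\tr|a_\pi|^2=1$; hence $\sup_{p\ge2}p^{-1/2}\|(f\ast\mu^{\cdot})(x)\|_{L_p(\cl G)}$, and so (Lemma~\ref{41}) $\|(f\ast\mu^{\cdot})(x)\|_{\psi_2(\cl G)}$, is bounded by a universal constant $K$, \emph{uniformly in $x$}. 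Integrating the resulting estimate $\int_{\cl G}\psi_2\big(|(f\ast\mu^u)(x)|/K\big)\,dm_{\cl G}(u)\le\psi_2(1)$ over $x\in G$ and applying Fubini and Chebyshev, I would produce a single $u_0$ with $\|f\ast\mu^{u_0}\|_{\psi_2(G)}\le\sqrt2\,K$; combined with the transfer step this gives $\|f\|_{\psi_2}\le\sqrt2\,CK\|f\|_2$. The passage to general $f\in L_2(G)$ with spectrum in $\Lambda$ is again by partial sums and Fatou.

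\emph{Implication $(ii')\Rightarrow(i)$.} By Corollary~\ref{c3} it suffices to show $\Lambda$ is randomly Sidon, and by Lemma~\ref{68} (Gaussian randomization) this amounts to the inequality
$$\sum_{\pi\in\Lambda}d_\pi\tr|a_\pi|\ \le\ \alpha\,\E\Big\|\sum_{\pi\in\Lambda}d_\pi\tr(g_\pi\,\pi\,a_\pi)\Big\|_\infty$$
for all finitely supported $(a_\pi)$. Writing $f$ for the polynomial with ${}^t\hat f(\pi)=a_\pi$ and $\tilde R(x)=\sum_\pi d_\pi\tr(g_\pi\pi(x)a_\pi)$, Fernique's minoration \eqref{ep4} bounds the right side below by $c^{-1}{\cl I}_2(f)$, so the whole matter reduces to the entropy estimate
$$ {\cl I}_2(f)\ \gtrsim\ \|f\|_{A(G)}\qquad\big(\|f\|_{A(G)}=\textstyle\sum_\pi d_\pi\tr|a_\pi|\big)$$
for every $f$ with finite spectrum in $\Lambda$. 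Here the hypothesis enters because every translate-difference $f_t-f$ has spectrum in $\Lambda$, hence by $(ii)$ satisfies $\|f_t-f\|_{\psi_2}\le C\|f_t-f\|_2=C\,d_f(t,1)$, i.e.\ $s\mapsto|f(st)-f(s)|$ obeys a subgaussian tail at scale $d_f(t,1)$. In the model case where all nonzero singular values of all $a_\pi$ equal $1$ one has $\|f\|_{A(G)}=\|f\|_2^2$ and $d_f(t,1)^2=2(\|f\|_2^2-\Re h(t))$ with $h=\sum_\pi d_\pi\tr(\pi(\cdot)a_\pi a_\pi^*)$ a positive-definite function of spectrum in $\Lambda$ and $h(1)=\|h\|_2^2=\|f\|_2^2$; the balls $B_\vp=\{t:d_f(t,1)<\vp\}$ are then superlevel sets of $\Re h$, and the subgaussian tail of $h$ forces $m_G(B_\vp)\le K\exp(-c\|f\|_2^2)$ for all $\vp\le\|f\|_2$, whence ${\cl I}_2(f)\ge\int_0^{\|f\|_2}(\log m_G(B_\vp)^{-1})^{1/2}\,d\vp\gtrsim\|f\|_2\cdot\|f\|_2=\|f\|_{A(G)}$ (bounded values of $\|f\|_2$ being handled directly by $\E\|\tilde R\|_\infty\ge\E|\tilde R(1)|\gtrsim\|f\|_2$). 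I expect the \textbf{main obstacle} to be exactly this entropy inequality ${\cl I}_2(f)\gtrsim\|f\|_{A(G)}$ beyond the model case: reducing a general family $(a_\pi)$ to one with comparably-sized singular components, and controlling $m_G(B_\vp)$ across all scales of $\vp$ simultaneously, is the technical heart of the argument (this is where the delicate part of \cite{Pi,Pi2} lies). The two framing reductions --- Corollary~\ref{c3} and the Gaussian minoration \eqref{ep4} --- are purely formal once that estimate is in hand.
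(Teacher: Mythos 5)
Your treatment of $(ii)\Leftrightarrow(ii)'$ matches the paper (Lemma~\ref{41} / \eqref{73}), and your argument for $(i)\Rightarrow(ii)$ is sound and uses the same two ingredients the paper does --- Lemma~\ref{l3}(i) and the Gaussian domination of Lemma~\ref{R72} --- though by a slightly different bookkeeping (pointwise $\psi_2$-control on $\cl G$ plus a Fubini/Chebyshev selection of $u_0$, versus the paper's integration of the $L_p$-moments over $u$). There is a small slip when you undo the unitary twist: the correcting measure should be $\mu^{u^{-1}}$ (so that $\hat f(\pi)u_\pi\hat\nu(\pi)=\hat f(\pi)$), not the one associated to $w_\pi^{-1}={}^t u_\pi^{-1}$, but this does not affect the argument.

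The genuine gap is in $(ii')\Rightarrow(i)$. You correctly reduce (via Corollary~\ref{c3} and Lemma~\ref{68}) to the inequality $\|f\|_{A(G)}\lesssim\E\|\tilde R\|_\infty$, and you correctly invoke Fernique's minoration ${\cl I}_2(f)\lesssim\E\|\tilde R\|_\infty$. But you then try to prove ${\cl I}_2(f)\gtrsim\|f\|_{A(G)}$ by directly estimating $m_G(B_\vp)$ from the subgaussian tail of $\Re h$, and you only succeed in the model case where all nonzero singular values of the $\hat f(\pi)$ are equal; you acknowledge the gap but misidentify the hard step, which does not lie where you place it. The paper never attempts a direct ball-volume estimate. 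Instead it applies \emph{Dudley's upper bound for subgaussian processes}: since $(ii)$ gives $\|f_s-f_t\|_{\psi_2}\le C\,d_f(s,t)$, the stationary process $x\mapsto f(sx)$ on $(G,m_G)$ is subgaussian for the metric $d_f$, and Dudley's majoration yields, for mean-zero $f$, the key inequality $\|f\|_\infty\le c'C\,{\cl I}_2(f)$ (the paper's \eqref{188}). Evaluating at the identity gives $|f(1)|\le c'C\,{\cl I}_2(f)\le cc'C\,\E\|\tilde R\|_\infty$, and replacing $a_\pi$ by $a_\pi z_\pi$ --- which leaves $d_f$, hence ${\cl I}_2(f)$ and also $\E\|\tilde R\|_\infty$, invariant while allowing $|f(1)|$ to reach $\|f\|_{A(G)}$ --- finishes the proof. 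The two ideas missing from your sketch are precisely the use of Dudley's \emph{upper} bound (not just Fernique's lower bound) and the evaluation-at-identity-plus-unitary-twist step; with them the inequality ${\cl I}_2(f)\gtrsim\|f\|_{A(G)}$ holds in full generality and no model-case reduction is needed.
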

   \begin{proof}[Sketch]
   The equivalence between (ii) and (ii)' is immediate by \eqref{73}.
   The proof that (i) $\Rightarrow$ (ii) follows a classical argument due to Rudin
  that Fig\`a-Talamanca and Rider adapted to the
   non-Abelian case. The quicker argument in \cite{MaPi} avoids their
   moment  computations by using instead Lemma \ref{R72}, but
   first we use (i) in Lemma \ref{l3}. 
   With the notation in that Lemma, assuming $\Lambda$  Sidon, 
     the operator of convolution
   by $\mu^{u}$ has norm $\le C$ on $L_p(G)$ for any $1\le p\le \infty$.
   Therefore, for any $f=\sum\n_{\pi\in \Lambda} d_\pi \tr ( \pi {}^t\hat f(\pi) )$ (finite sum)
   we have 
   $$\|\sum\n_{\pi\in \Lambda} 
   d_\pi \tr ( \pi  {}^t (u_\pi \hat f(\pi)) ) \|_p\le C\|f\|_p.$$
   As before let $\cl G=\prod_{\pi \in \hat G} U(d_\pi)$ (actually  we could work simply with $\prod_{\pi \in \Lambda} U(d_\pi)$).
   Let $u=(u_\pi)\in \cl G$. 
   Let  $F_u=\sum\n_{\pi\in \Lambda} d_\pi \tr (  \pi {}^t (u_\pi^*\hat f(\pi) ))$.
   Applying this with  $F_u$ in place of $f$ we find
   $$\|f\|_p\le C\|F_u\|_p$$
   and hence
    $$\|f\|_p\le C(\int \|F_u\|^p_p m_{\cl G}(du))^{1/p}.$$
   Note $F_u=\sum\n_{\pi\in \Lambda} d_\pi \tr (     u_\pi^*\hat f(\pi)    {}^t\pi )$.
   By Lemma \ref{R72}
   $$(\int \|F_u\|^p_p m_{\cl G}(du))^{1/p} \le 
   C_0 (\E\|\sum\n_{\pi\in \Lambda} d_\pi \tr (g_\pi \hat f(\pi){}^t\pi ) \|^p_p)^{1/p}
   $$ and since $({}^t\pi (x)g_\pi(\omega))$ (on $G\times \Omega$) and $(  g_\pi)$ (on $ \Omega$) have the same distribution,
   we have using \eqref{74} 
   $$(\E\|\sum\n_{\pi\in \Lambda} d_\pi \tr (g_\pi \hat f(\pi) {}^t\pi ) \|^p_p)^{1/p}
= (\E|\sum\n_{\pi\in \Lambda} d_\pi \tr (g_\pi \hat f(\pi)  ) |^p_p)^{1/p}= \gamma(p)\|f\|_2
$$
where $\gamma(p)$ is the $L_p$-norm of a normalized complex Gaussian
variable. This gives us
$$\|f\|_p\le C C_0 \gamma(p)\|f\|_2,$$ and since $\gamma(p)=O(\sqrt p)$,
       we obtain (ii) by \eqref{73}.\\
    The proof that (ii) $\Rightarrow$ (i) in \cite{Pi,MaPi} uses the metric
    entropy characterization of the
    Gaussian random Fourier series that are continuous a.s..
    We merely outline the original argument.
    Fix  $f=\sum\n_{\pi\in \Lambda} d_\pi \tr ( \pi {}^t\hat f(\pi))$ (finite sum).
      We will use Gaussian process theory through 
      the \emph{minoration} \eqref{ep4}.
But, by another result from that theory (a variant of  
 Dudley's upper bound), the integral ${\cl I}_2(f)$
 \emph{majorizes} the subGaussian processes
that are suitably dominated in the metric sense by $d_f$.
More specifically, since (ii) implies $\|f_s-f_t\|_{{\psi_2} } \le C d_f(s,t)$
the said majorization implies (assuming $\int f dm_G=0$)
that ($c'$ is here an absolute constant)
\begin{equation}\label{188}\|f\|_\infty\le c' C{\cl I}_2(f).\end{equation}
 Therefore,
we obtain
\begin{equation}\label{88'}\|f\|_\infty\le  c' C{\cl I}_2(f)\le cc' C \E\| \sum  d_\pi \tr (g_\pi  \pi {}^t \hat f(\pi) )\|_\infty, \end{equation} and hence
 \begin{equation}\label{88} |\sum  d_\pi \tr ( {}^t\hat f(\pi)  )|=|f(1)|\le cc' C \E\| \sum  d_\pi \tr (g_\pi \pi {}^t\hat f(\pi) )\|_\infty.\end{equation}
But by the distributional invariance property of $(g_\pi)$
we have
for any $z_\pi\in U(d_\pi)$
 $$\E\| \sum  d_\pi \tr (g_\pi \pi {}^t\hat f(\pi) )\|_\infty
 =\E\| \sum  d_\pi \tr (z_\pi g_\pi \pi {}^t\hat f(\pi)  )\|_\infty=\E\| \sum  d_\pi \tr (g_\pi \pi ({}^t\hat f(\pi) z_\pi) )\|_\infty,$$ and hence  
\eqref{88} applied to $\sum  d_\pi \tr (g_\pi \pi ({}^t\hat f(\pi)z_\pi) )$ implies
after taking the sup over $z_\pi$
 $$
 \sum\n_{\pi\in \Lambda} d_\pi \tr (|\hat f(\pi)|) =\sum\n_{\pi\in \Lambda} d_\pi \tr (|{}^t\hat f(\pi)|) \le cc' C \E\| \sum  d_\pi \tr (g_\pi \pi {}^t\hat f(\pi) )\|_\infty. 
$$
In other words, provided we can replace
$(g_\pi )$ by $(u_\pi )$, we  conclude that
$\Lambda$ is randomly Sidon 
and hence Sidon by Corollary \ref{c3}.
The replacement of $(g_\pi )$ by $(u_\pi )$ is   justified
by Lemma \ref{68}   (see also the discussion around \eqref{mapi}).
\end{proof}
  \begin{rem} Let $f\in C(G)$. Note $\|f\|_\infty =\int \sup_{x\in G} |f(tx)| m_G(dt)$. For proper perspective, we use this observation to rewrite \eqref{88'} as
 \begin{equation}\label{89}\|f\|_\infty =\int \|\sum  d_\pi \tr ( \pi(t) \pi {}^t \hat f(\pi) )\|_\infty m_G(dt) \le  cc' C \E\| \sum  d_\pi \tr (g_\pi \pi {}^t\hat f(\pi) )\|_\infty.\end{equation}
 Let $Y_x(t)=\sum  d_\pi \tr ( \pi(t) \pi(x) {}^t\hat f(\pi) )$
 and $X_x(\omega)=\sum  d_\pi \tr (g_\pi(\omega) \pi(x) {}^t\hat f(\pi) $.
 Then \eqref{89} means
  \begin{equation}\label{89'}\int \sup\n_{x\in G} |Y_x| dm_G \le    cc' C \E\sup\n_{x\in G}  |X_x|.\end{equation}
 In the preceding proof the  Dudley-Fernique metric entropy bounds
were used only to prove  \eqref{89} or equivalently \eqref{89'}. 
These require a certain group invariance (namely the process $(X_x)$ must be a stationary Gaussian process).
Inspired by the latter bounds, Talagrand \cite{Ta}
managed to prove a general version of \eqref{89'} that does not require any group invariance. 
More precisely, he proved that there is an 
  absolute constant $\tau_0$ such that:\\ 
  If $(\varphi_n)$ are variables such that
for any finitely supported scalar sequence $(a_n)$ we have
$$\|\sum a_n\varphi_n\|_{\psi_2}\le (\sum |x_n|^2)^{1/2}$$
and if $(f_n)$ are arbitrary functions on a set $S$ then we have
$$\E \sup\n_{x\in S} |\sum \varphi_n f_n(x)| \le \tau_0 \E\sup\n_{x\in S} |\sum g_n f_n(x)| .$$
 We use this in our recent paper \cite{Pi3} to prove a version
of the implication subGaussian $\Rightarrow$ Sidon
for general uniformly bounded orthonormal systems, that improves
an earlier   breakthrough due to Bourgain and Lewko \cite{BoLe}.
We also give in \cite{Pi3} an analogue of
(ii) $\Rightarrow$ (i) in Theorem \ref{t6} to the case when the system
$\{d_\pi^{1/2}\pi_{ij}\mid \pi\in \Lambda, 1\le i,j\le d_\pi\}$
is replaced by an orthonormal system on a probability space $(T,m)$ 
indexed by a set $\Lambda$ such that
the norms of the $d_\pi\times d_\pi$ matrices $[\pi_{ij}(t)]$
are uniformly bounded over $t\in T$ and $\pi\in \Lambda$.
In the same framework, we also give an analogue
of the equivalence between Sidon and randomly Sidon.
 See \S \ref{new} for
a related application of these ideas.
   \end{rem}
   The following refinement of Theorem \ref{t6} proved in \cite{Pi2}   will be useful.
   \begin{lem}\label{ep5} Assume that $G$ is Abelian (so that $d_\pi=1$ for all $\pi$).
   Let $1<p<2<p'<\infty$ such that $1/p+1/p'=1$. Assume that there is a constant $C$
   such that for any $f\in L_2(G)$ with $\hat f$
supported in $\Lambda$ we have
 \begin{equation}\label{ep1} \|f\|_{{{\psi_{p'}} }} \le C (\sum\n_{\pi\in \Lambda} |\hat f(\pi)|^p)^{1/p}.\end{equation}
Then $\Lambda$ is Sidon.
   \end{lem}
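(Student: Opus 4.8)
The plan is to prove that $\Lambda$ is \emph{randomly Sidon} and then to quote Corollary~\ref{c3}. Since $G$ is Abelian we have $d_\pi=1$ throughout, and by Lemma~\ref{68} (see also Remark~\ref{sud}) we may run the randomisation with standard complex Gaussians; so the goal is a constant $C'$ with
\[
\sum\nolimits_{\pi\in\Lambda}|a_\pi|\ \le\ C'\,\EE\Big\|\sum\nolimits_{\pi\in\Lambda}g_\pi a_\pi\pi\Big\|_\infty
\]
for every finitely supported family $(a_\pi)_{\pi\in\Lambda}$, where $f:=\sum_\pi a_\pi\pi$. One may assume $t_G\notin\Lambda$ (the hypothesis \eqref{ep1} passes to $\Lambda\setminus\{t_G\}$ and adjoining a single character to a Sidon set keeps it Sidon), so $\int f\,dm_G=0$. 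Note that the cheap bound $\EE\|\sum_\pi g_\pi a_\pi\pi\|_\infty\ge\EE|\sum_\pi g_\pi a_\pi|\gtrsim(\sum_\pi|a_\pi|^2)^{1/2}$ is automatic, so what must really be gained is the passage from $\ell_2$ to $\ell_1$ on the coefficient side; observe also that \eqref{ep1} does \emph{not} yield the $\Lambda(q)$--$O(\sqrt q)$ bound of Theorem~\ref{t6}(ii)$'$ function by function, so the improvement has to be extracted at the level of the set, through the metric‑entropy machinery. The whole argument then runs parallel to the proof of (ii)$\Rightarrow$(i) in Theorem~\ref{t6}, the only change being that the order‑$2$ subgaussian input there is replaced by the order‑$p'$ input \eqref{ep1}, at the price of one additional entropy comparison.

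In detail: first, apply \eqref{ep1} to every translate‑difference $f_s-f_t$ (which again has spectrum in $\Lambda$, and whose $\pi$‑coefficient is $(\pi(s)-\pi(t))a_\pi$) to get the increment bound $\|f_s-f_t\|_{\psi_{p'}}\le C\,\rho(s,t)$, where $\rho(s,t):=\big(\sum_\pi|a_\pi|^p|\pi(s)-\pi(t)|^p\big)^{1/p}$; the same bound, with the \emph{same} metric $\rho$, holds for every randomisation $\sum_\pi z_\pi a_\pi\pi$ ($z\in\cl G$). Next, the $\psi_{p'}$‑analogue of the Dudley‑type majorization used to derive \eqref{188}, combined with the left‑invariance \eqref{ab} applied to the metric $\rho$, bounds $\|f\|_\infty$ by the entropy integral ${\cl J}_{p'}(f):=\int_0^\infty\big(\log N_\rho(\vp)\big)^{1/p'}\,d\vp$, where $N_\rho(\vp)$ is the $\rho$‑covering number of $G$. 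On the other side, Fernique's minoration \eqref{ep4} gives ${\cl I}_2(f)\le c\,\EE\|\sum_\pi g_\pi a_\pi\pi\|_\infty$ (with ${\cl I}_2$ built from the $\ell_2$‑metric $d_f(s,t)=(\sum_\pi|a_\pi|^2|\pi(s)-\pi(t)|^2)^{1/2}$). If one can bridge these by a comparison ${\cl J}_{p'}(f)\le c_p\,{\cl I}_2(f)$ with $c_p$ depending only on $p$, then $\|f\|_\infty\le c'\,\EE\|\sum_\pi g_\pi a_\pi\pi\|_\infty$; applying this with $(z_\pi a_\pi)$ in place of $(a_\pi)$, using the unitary invariance of the Gaussian family to take the supremum over $z_\pi\in\T$, and recalling $|f(1)|=|\sum_\pi a_\pi|$, one reaches $\sum_\pi|a_\pi|\le c'\,\EE\|\sum_\pi g_\pi a_\pi\pi\|_\infty$, i.e. $\Lambda$ is randomly Sidon, hence Sidon by Corollary~\ref{c3}.

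The main obstacle is precisely the entropy comparison ${\cl J}_{p'}(f)\le c_p\,{\cl I}_2(f)$, and this is the one place where the assumption $p<2$ is decisive: since $p<2$ one has $d_f\le\rho$ pointwise (so $N_{d_f}(\vp)\le N_\rho(\vp)$), and the Dudley exponent $1/p'$ is strictly below $1/2$, and the task is to convert the quantitative gap between the $\ell_p$‑ and $\ell_2$‑weightings into a gap between $N_\rho$ and $N_{d_f}$ that exactly offsets the $\ell_2\to\ell_1$ loss. A crude Hölder estimate of $d_f$ in terms of $\rho$ leaks a factor depending on $\max_\pi|a_\pi|$, which would destroy the required uniformity over $f$; the genuine argument must instead compare the two entropy \emph{integrals} directly — controlling, scale by scale, how much the heavier $\ell_p$‑metric inflates covering numbers — so that the surplus from the smaller exponent $1/p'$ compensates the passage from $\ell_2$ to $\ell_1$, with all constants independent of the number of frequencies. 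That quantitative balancing is the real content of the lemma; everything else is a transcription of the Theorem~\ref{t6} argument.
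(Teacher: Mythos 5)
Your set-up --- reduce to randomly Sidon, randomize with Gaussians, use a $\psi_{p'}$-Dudley upper bound against the metric $d_{p,f}$ and play it off against Fernique's lower bound in the $\ell_2$-metric --- is exactly the architecture of the paper's proof, and you also correctly diagnose that a naive Hölder comparison of $d_{2,f}$ and $d_{p,f}$ leaks a factor of $\max_\pi|\hat f(\pi)|$ and is therefore useless. But you then reduce the entire lemma to the unproved assertion ${\cl I}_p(f)\le c_p\,{\cl I}_2(f)$ with $c_p$ depending only on $p$, and supply no proof of it. This is not a detail: that direct comparison is not what the paper proves, and stated as an unconditional inequality about finite trigonometric polynomials (independent of $C$ and of the hypothesis \eqref{ep1}) it has no reason to be true, since for generic coefficient families the $\ell_p$-metric ($p<2$) is much larger than the $\ell_2$-metric and the smaller exponent $1/p'$ need not compensate uniformly. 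The proposal thus stops precisely at the point where the real content begins.

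What the paper does instead is establish the \emph{unconditional multiplicative} inequality
$$
{\cl I}_p(f)\ \le\ c''\,\Big(\sum\nolimits_{\pi}|\hat f(\pi)|\Big)^{1-\theta}\,{\cl I}_2(f)^{\theta},
$$
with $0<\theta<1$ and $c''$ depending only on $p$, and then closes the loop by combining it with the Dudley-type bound $\sum_\pi|\hat f(\pi)|\le Cc_p'\,{\cl I}_p(f)$ --- obtained from \eqref{ep1} after replacing $f$ by $\sum_\pi|\hat f(\pi)|\pi$, which leaves $d_{p,f}$ and ${\cl I}_p(f)$ invariant --- and dividing by $(\sum_\pi|\hat f(\pi)|)^{1-\theta}$. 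The multiplicative inequality itself rests on two ingredients absent from your proposal: (1) one interpolates not between $d_{p,f}$ and $d_{2,f}$ directly, but through a \emph{third} auxiliary metric $d_{q,f}$ with $1<q<p$, using $d_{p,f}\le d_{q,f}^{1-\theta}d_{2,f}^{\theta}$ to get the dyadic entropy estimate $e_{2n}(d_{p,f})\le e_n(d_{q,f})^{1-\theta}e_n(d_{2,f})^{\theta}$, hence $\Sigma_p(f)\lesssim \Sigma_q(f)^{1-\theta}\Sigma_2(f)^{\theta}$ for the dyadic sums equivalent to the entropy integrals; and (2) one invokes the approximation-theory estimate of Marcus and Carl, namely $\Sigma_q(f)\le \beta_q\sum_\pi|\hat f(\pi)|$ for every $q>1$, to convert the $d_{q,f}$-entropy into the $\ell_1$-norm of the coefficients. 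This last estimate is exactly what replaces the "scale-by-scale quantitative balancing" you describe as the real content of the lemma, and it is the ingredient you would have needed to supply.
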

   \begin{proof} 
   Let  
   $$d_{p,f}(t,s)=(\sum\n_{\pi\in \hat G} | \hat f(\pi) (\pi(t)-\pi(s))  |^p )^{1/p}.$$
   We will use a variant of the metric entropy integral ${\cl I}_2(f)$, namely
   $${\cl I}_p(f)=\int_0^\infty  (\log \frac{1}{m_G(\{t\mid d_{p,f}(t,1)<\vp\})})^{1/p'} d\vp.$$
   Schematically, the proof can be described like this:
   By a generalization of the Dudley majorization \eqref{188} we have
   (assuming still $\int f dm_G=0$) that
   if we assume
   $$ \forall t,s\in G\quad \|f_t-f_s\|_{\psi_{p'}}  \le C d_{p,f}(t,s)$$
   then we have
 $$\|f\|_\infty\le C c_p' {\cl I}_p(f),$$
   and replacing $f$ by $ \sum\n_{\pi\in \hat G}  |\hat f(\pi)| \pi$ (which leaves $d_{p,f}$
   and hence also $ {\cl I}_p(f)  $   invariant) we find
    \begin{equation}\label{188p}
    \sum\n_{\pi\in \hat G} |\hat f(\pi)| \le C c_p' {\cl I}_p(f).\end{equation}
    This shows that if $\Lambda$ satisfies the assumption
    \eqref{ep1} then
  any $f\in L_2(G)$ with $\hat f$
supported in $\Lambda\setminus \{0\}$ satisfies
  \begin{equation}\label{ep2}
    \sum\n_{\pi\in \Lambda} |\hat f(\pi)| \le C c_p' {\cl I}_p(f).\end{equation}
    We may assume $0\not\in \Lambda$ for simplicity.
    The conclusion will follow from the following inequality
    \begin{equation}\label{ep3} {\cl I}_p(f) \le c'' (\sum\n_{\pi\in \hat G} |\hat f(\pi)|)^{1-\theta} {\cl I}_2(f)^{\theta} ,\end{equation}
    where $c''$ depends only on $p$ and where $0<\theta<1$.\\
    Indeed, \eqref{ep3}  combined with \eqref{ep2} implies
    $$  \sum\n_{\pi\in \Lambda} |\hat f(\pi)| \le C c_p' c'' (\sum\n_{\pi\in \hat G} |\hat f(\pi)|)^{1-\theta} {\cl I}_2(f)^{\theta}  ,$$
    and after a suitable division  we find
    $$\sum\n_{\pi\in \Lambda} |\hat f(\pi)| \le (C c_p' c'' )^{1/\theta} {\cl I}_2(f)  .$$
    But now using Fernique's lower bound \eqref{ep4}, we conclude
    as in the preceding proof that $\Lambda$ is Sidon.\\
    It remains to justify \eqref{ep3}. 
    Let $N_p(\vp)$ denote  the smallest number of sets of $d_{p,f}$-diameter 
    $\le \vp$
    that suffice to cover $G$. 
    Let $e_n(d_{p,f}) $ be the smallest
    number $\vp$ such that $G$ can be covered by $2^n$
    sets of $d_{p,f}$-diameter 
    $\le \vp$ (i.e. such that $N_p(\vp)\le 2^n$). 
    We first note that
    ${\cl I}_p(f) $ is equivalent to 
    $$\int_0^\infty  (\log N_p(\vp))^{1/{p'}} d\vp.$$
 Then,   since 
    $\int_0^\infty  (\log N_p(\vp))^{1/{p'}} d\vp=\sum_n \int_{e_n}^{e_{n-1}}  (\log N_p(\vp))^{1/{p'}} d\vp $ 
     one
    checks easily that the latter quantity
     is equivalent to the following one:
    $$\Sigma_p(f)= \sum\n _0^\infty e_n(d_{p,f}) n^{-1/{p'}}.$$
       Let  $1<q<p<2$. Let  $0<\theta<1$ be such that
    $(1-\theta)/q +\theta/2 = 1/p$. By H\"older's inequality,
    $$d_{p,f} \le d_{q,f}^{1-\theta} d_{2,f}^\theta .$$
    Thus if $A_0$ has $d_{q,f}$-diameter $\le r_0$
    and if $A_1$ has $d_{2,f}$-diameter $\le r_1$, then
    $A_0\cap A_1$ has $d_{p,f}$-diameter $\le r_0^{1-\theta} r_1^\theta$.
    From this it is clear (taking intersections) that 
    $G$ can be covered by $2^n\times 2^n$
    sets with $d_{p,f}$-diameter $\le (e_n(d_{q,f}))^{1-\theta} (e_n(d_{2,f}))^{\theta} $. In other words
    $$  e_{2n}(d_{p,f}) \le (e_n(d_{q,f}))^{1-\theta} (e_n(d_{2,f}))^{\theta} .$$
Therefore by H\"older
$$ \sum\n _0^\infty e_{2n}(d_{p,f}) n^{-1/{p'}} 
\le \sum\n _0^\infty (e_n(d_{q,f}) n^{-1/{q'}} )^{1-\theta} (e_n(d_{2,f}) n^{-1/{2}} )^{\theta}
\le (\Sigma_q(f))^{1-\theta} (\Sigma_2(f))^{\theta} .$$
But since the numbers $e_{n}(d_{p,f})$ (and also $e_{n}(d_{p,f})n^{-1/{p'}}$) are obviously non-increasing
we have
$ \sum\n _0^\infty e_{n}(d_{p,f}) n^{-1/{p'}} \le 
2\sum\n _0^\infty e_{2n}(d_{p,f}) (2n)^{-1/{p'}} $ and hence we obtain
$$\Sigma_p(f) \le 2^{1/p} (\Sigma_q(f))^{1-\theta} (\Sigma_2(f))^{\theta} .$$
Lastly, we invoke a result from  approximation theory, that tells us
that  for any $1<q<\infty$ there is a constant  $\beta_q$ 
such that  
$$\Sigma_q(f) \le \beta_q \sum\n_{\pi\in \hat G} |\hat f(\pi)|.$$
See \cite{Ma}  or   \cite[Prop. 2, p. 142]{Ca}.
 Since $\Sigma_p(f) $ is equivalent to  ${\cl I}_p(f)$, this gives us
 \eqref{ep3}.
\end{proof}
\begin{thm}[Sidon versus \emph{central} $\Lambda(p)$-sets] \label{t3} Let $\Lambda \subset \hat G$. 
Recall ${\cl G}=\prod_{\pi\in \Lambda} U(d_\pi)$. Consider the following assertions in addition to (i) and (ii) in Theorem \ref{t6}.
\begin{itemize}

\item[(iii)] Same as (ii) for all (central) functions $f $ 
of the form $f=\sum\nolimits_{\pi\in A} d_\pi\chi_\pi$ where
$A\subset \Lambda$ is an arbitrary finite subset.
\item[(iii)'] There is a constant $C$ such that for any 
even integer $2\le p<\infty$  
and any finite subset $A\subset \Lambda$ we have
$$ \|\sum\nolimits_{\pi\in A} d_\pi\chi_\pi \|_p\le C\sqrt p \|\sum\nolimits_{\pi\in A} d_\pi\chi_\pi\|_2=C\sqrt p (\sum\nolimits_{\pi\in A} d_\pi^2)^{1/2}.$$ 
  \item[(iv)] For any $0<\d<1$ there is $0<\beta<\infty$ such that for any finite
 subset $A\subset \Lambda$ we have
 $$m_G(\{t\in G\mid \sum\nolimits_{\pi\in A} d_\pi \Re(\chi_\pi) > \d \sum\nolimits_{\pi\in A} d_\pi^2\})\le e\exp{-(\beta \sum\nolimits_{\pi\in A} d_\pi^2} )$$
\item[(v)] There  are $0<\d<1$ and $0<\beta<\infty$ such that for any finite
 subset $A\subset \Lambda$ we have
 $$m_G(\{t\in G\mid \sum\nolimits_{\pi\in A} d_\pi \Re(\chi_\pi) > \d \sum\nolimits_{\pi\in A} d_\pi^2\})\le e\exp{-(\beta \sum\nolimits_{\pi\in A} d_\pi^2} )$$
 \item[(vi)]  There is a constant $C$ such that for any finite $A\subset \Lambda$
$$  \sum\nolimits_{\pi\in A} d_\pi^2
\le C \int_{\cl G} \sup\nolimits_{g\in G}   |\sum\nolimits_{\pi\in A} d_\pi    \tr ({u}_\pi  \pi(g)) | m_{\cl G}(d{u})  .$$
 \item[(vii)] There is   $0<\d<1$ such that
 any finite subset $A\subset \Lambda$ contains a further subset $B\subset A$
 with Sidon constant at most $1/\d$ and such that
 $\sum\nolimits_{\pi\in B} d^2_\pi\ge \d\sum\nolimits_{\pi\in A} d^2_\pi$.
\item[(viii)]  
 There is a constant $C$  
 such that for any finite subset $A\subset \Lambda$, 
 and any $f\in L_2(G)$ with $\hat f$
supported in $A$ we have
 \begin{equation}\label{4.17}  \| f\|_{\psi_2} \le   C  (\sum\n_{\pi\in A} d_\pi^2)^{1/2} 
 \sup\n_{\pi \in A} \|\hat f(\pi)\|.\end{equation} 
\end{itemize}
Then (i) $\Rightarrow$ (ii)  $\Rightarrow$ (iii)   $\Leftrightarrow$ (iii)'  $\Rightarrow$ (iv) $\Rightarrow$ (v)
 $\Rightarrow$ (vi) $\Rightarrow$ (vii) $\Rightarrow$ (viii).
 Moreover, (viii) $\Rightarrow$ (i) if $G$ is Abelian, or more generally
 if the dimensions $\{d_\pi\mid \pi\in \Lambda\}$ are uniformly bounded.
\end{thm}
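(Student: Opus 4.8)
The proof runs around the displayed cycle by a chain of arguments that are mostly routine, with two substantial links, (v)$\Rightarrow$(vi) and (vi)$\Rightarrow$(vii), and one delicate one, (viii)$\Rightarrow$(i). I first dispose of the easy implications. (i)$\Rightarrow$(ii) is part of Theorem \ref{t6}. (ii)$\Rightarrow$(iii) is immediate, the functions in (iii) being a special case of those in (ii). The equivalence (iii)$\Leftrightarrow$(iii)$'$ is the Orlicz dictionary \eqref{73} (restricted to even integers) together with Parseval, noting $\|\sum_{\pi\in A}d_\pi\chi_\pi\|_2^2=\sum_{\pi\in A}d_\pi^2$ by orthonormality of characters. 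For (iii)$'\Rightarrow$(iv) I would set $S=\sum_{\pi\in A}d_\pi^2$, $h=\sum_{\pi\in A}d_\pi\chi_\pi$, note $\|h\|_p\le C'\sqrt p\,S^{1/2}$ for all $p\ge2$ (rounding $p$ up to an even integer), and apply Chebyshev: $m_G(\{\Re h>\d S\})\le(\d S)^{-p}\|h\|_p^p=(C'^2p/(\d^2S))^{p/2}$; optimizing at $p\sim\d^2S/(C'^2 e)$ yields the stated exponential bound with $\beta$ proportional to $\d^2$, the range of small $S$ being absorbed into the factor $e$ on the right. Finally (iv)$\Rightarrow$(v) is trivial, (v) being (iv) for one particular $\d$.

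\textbf{(v)$\Rightarrow$(vi).} The key is the identity for the metric $d_f$ of $f=h=\sum_{\pi\in A}d_\pi\chi_\pi$: by Schur orthogonality $\int h(xt)\overline{h(x)}\,dm_G(x)=h(t)$, hence $d_f(t,1)^2=\|h_t-h\|_2^2=2\bigl(S-\Re h(t)\bigr)$. Thus $\{t:\Re h(t)>\d S\}=\{t:d_f(t,1)<\sqrt{2(1-\d)S}\}$, so (v) says the $d_f$-ball of radius $\sqrt{2(1-\d)S}$ about $1$ has $m_G$-measure $\le e\exp(-\beta S)$. Plugging $\vp=\sqrt{2(1-\d)S}$ into Sudakov's minoration \eqref{44} (with $R(x)=\sum_{\pi\in A}d_\pi\tr(u_\pi\pi(x))$, whose $\EE\sup_g|R(g)|$ is exactly the right side of (vi)) gives $\sqrt{(1-\d)\beta}\,S\le cc'\,\EE\sup_x|R(x)|$ as soon as $\beta S\ge2$. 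For the remaining bounded range $S<2/\beta$ one simply uses $\EE\sup_x|R(x)|\ge\EE|R(1)|=\EE_u|\sum_{\pi\in A}d_\pi\tr u_\pi|\ge\kappa S^{1/2}\ge\kappa\sqrt{\beta/2}\,S$, the middle inequality being a Khintchine (Paley--Zygmund) bound for the independent mean-zero variables $\tr u_\pi$, which satisfy $\EE|\tr u_\pi|^2=1$ and $\EE|\tr u_\pi|^4\le2$. Combining the two cases gives (vi).

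\textbf{(vi)$\Rightarrow$(vii), and (vii)$\Rightarrow$(viii).} The crux is (vi)$\Rightarrow$(vii). I would run a random-selection argument in the spirit of \cite{MaPi,Pi2}: choose each $\pi\in A$ independently with a fixed probability, producing a random subset $B$ with $\EE\sum_{\pi\in B}d_\pi^2=\tfrac12\sum_{\pi\in A}d_\pi^2$, and show, using (vi) together with the matricial contraction principle (Remark \ref{90}) and a deviation estimate for $\sup$ of the process, that with positive probability $B$ both has $\sum_{\pi\in B}d_\pi^2\ge\d\sum_{\pi\in A}d_\pi^2$ and satisfies the \emph{full} randomly-Sidon inequality $\sum_{\pi\in B}d_\pi\tr|a_\pi|\le\d^{-1}\EE\sup_g\|\sum_{\pi\in B}d_\pi\tr(u_\pi\pi(g)a_\pi)\|$ for arbitrary matrix coefficients; by Corollary \ref{c3}, such a $B$ is Sidon with constant $\le\d^{-1}$. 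The main obstacle is precisely this upgrade from the identity-coefficient bound (vi) to all matrix coefficients on a proportional subset; this is where the probabilistic extraction does its real work and where the older argument must be checked carefully. Granting (vii), to obtain (viii) I iterate the extraction: peel off Sidon subsets $B_1,B_2,\dots$ with $B_j\subset A\setminus(B_1\cup\cdots\cup B_{j-1})$ and $\sum_{B_j}d_\pi^2\ge\d\sum_{A\setminus(B_1\cup\cdots\cup B_{j-1})}d_\pi^2$, so $\sum_{B_j\cup B_{j+1}\cup\cdots}d_\pi^2\le(1-\d)^{j-1}\sum_Ad_\pi^2$ and the process terminates since $d_\pi^2\ge1$. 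For $\hat f$ supported in $A$, write $f=\sum_jf_j$ with $\widehat{f_j}$ supported in $B_j$; Theorem \ref{t6}((i)$\Rightarrow$(ii)) applied to each $B_j$ gives $\|f_j\|_{\psi_2}\le C_\d\|f_j\|_2\le C_\d(1-\d)^{(j-1)/2}(\sum_Ad_\pi^2)^{1/2}\sup_{\pi\in A}\|\hat f(\pi)\|$, and summing the geometric series yields \eqref{4.17} with $C=C_\d/(1-\sqrt{1-\d})$.

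\textbf{(viii)$\Rightarrow$(i) when the $d_\pi$ ($\pi\in\Lambda$) are bounded.} Let $D$ bound the dimensions. Decomposing each $\hat f(\pi)$ into its $d_\pi^2$ matrix entries and regrouping, one writes any $f$ with $\hat f$ supported in $A$ as a sum of at most $D^2$ functions, each a \emph{scalar} linear combination of a single fixed family of matrix entries $\{\pi_{ml}:\pi\in A\}$; by orthogonality the $L_2$-norms add in square, so it suffices to establish the genuine subgaussian estimate $\|\cdot\|_{\psi_2}\le C\|\cdot\|_2$ for each such scalar orthonormal subsystem. For $G$ abelian (so $D=1$) this is exactly the content of Lemma \ref{ep5} and its surrounding circle of ideas: (viii), restricted to these subsystems and to subsets, gives the required $\psi_2$ control on the relevant test functions, and one concludes Sidonicity via Fernique's lower bound \eqref{ep4} together with the Dudley-type majorization \eqref{188}. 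For bounded dimensions one argues the same way on each scalar subsystem, using the boundedness of $D$ to keep the number of such subsystems finite; this is the step where the dimension hypothesis is essential and where the claim of \cite{Pi2} in the general case overreached (see Remark \ref{err}). I expect (viii)$\Rightarrow$(i) and (vi)$\Rightarrow$(vii) to be the two places demanding the most care.
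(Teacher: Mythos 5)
Your handling of (i)$\Rightarrow$(ii)$\Rightarrow$(iii)$\Leftrightarrow$(iii)$'\Rightarrow$(iv)$\Rightarrow$(v) is fine, and your (v)$\Rightarrow$(vi) via $d_A(g,1)^2=2(N(A)-\Re\sum_A d_\pi\chi_\pi(g))$ and Sudakov \eqref{44} matches the paper, with the extra Paley--Zygmund patch for the small-$N(A)$ range being a harmless variation. The implication (vii)$\Rightarrow$(viii) by iterated extraction and a geometric series is equivalent to the paper's fixed-point (``$C_\Lambda\le C'_\d+C_\Lambda(1-\d)^{1/2}$'') argument.

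The real gap is (vi)$\Rightarrow$(vii), which you flag yourself as the main obstacle but only sketch via a hypothetical random-selection argument. The paper's argument here is \emph{deterministic}, not probabilistic, and is the crux of the theorem: one equips the span $V_A$ of $F(u)(g)=\sum_{\pi\in A} d_\pi\tr(u_\pi a_\pi\pi(g))$ with the $L_1({\cl G};C(G))$-norm $\|F\|_A$, notes that (vi) says $\|S_A\|_A\ge N(A)/C$, and picks by Hahn--Banach a norming functional $y=(y_\pi)\in V_A^*$ with $\|y\|_A^*\le 1$ and $\langle y,S_A\rangle\ge N(A)/C$. Translation-invariance of $\|\cdot\|_A$ in both $u$ and $g$, together with Schur's lemma (irreducibility of $\pi$), lets one average and replace $y_\pi$ by the scalar $I_{d_\pi}\tr(y_\pi)/d_\pi$, still of dual norm $\le 1$. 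This gives $\bigl|\sum_{\pi\in A}|\tr(y_\pi)|\tr|a_\pi|\bigr|\le\|F\|_A$ for \emph{arbitrary} $a_\pi$ (not just $a_\pi=I$), and $|\tr(y_\pi)|\le d_\pi$. Then $B=\{\pi\in A:|\tr(y_\pi)|>d_\pi/2C\}$ satisfies $\sum_B d_\pi^2\ge N(A)/2C$ and, by the last displayed inequality, is randomly Sidon with constant $\le 2C$, hence Sidon by Corollary \ref{c3}. Your proposed random selection of $B$ does not obviously deliver the passage from the identity-coefficient bound to the general $a_\pi$'s; the Hahn--Banach duality is precisely what performs that upgrade, and it cannot be dispensed with here. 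Without it your proposal leaves the most substantial implication unproved.

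There is also a secondary gap in (viii)$\Rightarrow$(i). You say this ``is exactly the content of Lemma \ref{ep5}'', but the hypothesis of that lemma is $\|f\|_{\psi_{p'}}\le C\|\hat f\|_{\ell_p}$ for some $1<p<2$, whereas (viii) only provides (in the scalar case) $\|f\|_{\psi_2}\le C|A|^{1/2}\sup_A|\hat f|$. The passage from the latter to the former is a nontrivial intermediate step: one first deduces $\|f\|_{\psi_2}\le 3C\|\hat f\|_{\ell_{2,1}}$ (Lorentz), then a real-interpolation/truncation argument (split $\hat f$ at level $N$, bound the large piece in $L_\infty$ by $\sum_1^N a_n^*$ and the tail in $\psi_2$, and optimize over $N$) produces the $\psi_{p'}$ estimate needed by Lemma \ref{ep5}. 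Your sketch skips this interpolation entirely. Once it is supplied, the bounded-dimension case reduces to finitely many scalar subsystems exactly as you indicate.
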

\begin{proof} Recall (i) $\Leftrightarrow$ (ii) by Theorem \ref{t6},
  (ii) $\Rightarrow$ (iii)   is trivial and (iii) $\Leftrightarrow$ (iii)' 
follows from \eqref{73}. \\  
Assume (iii). In the rest of the proof,
we follow \cite{Pi2} except for the correction indicated in Remark \ref{err}.
 Let   $A\subset \Lambda$
be a  finite subset. Let $N(A)=\sum\nolimits_{\pi\in A} d_\pi^2$. (Incidentally,
  $N(A)$ is  the Plancherel measure of $A$.)
By \eqref{73}
$ \|\sum\nolimits_{\pi\in A} d_\pi  \chi_\pi \|_{  L_{\psi_2}}\le
 CC_2 (N(A))^{1/2}  $.
 Therefore for any $\d>0$ we have
 $$m_G(\{t\in G\mid |\sum\nolimits_{\pi\in A} d_\pi  \chi_\pi | > \d N(A)\})\le
 e\exp-(\d^2N(A)/ (CC_2)^2) .$$
A fortiori, (iv) holds and (iv) $\Rightarrow$ (v) is trivial.\\
 Assume (v).
Let   $A\subset \Lambda$
be a  finite subset.
Consider the random Fourier series
$$S_A(g) =\sum\nolimits_{\pi\in A} d_\pi \tr({u}_\pi   \pi(g))$$
defined for $ {u}=({u}_\pi)\in \cl G$
as in Remark \ref{sud}. The associated metric $d_A$ is given by
$$d_A(g,g')^2=\sum\nolimits_{\pi\in A} d_\pi \tr|  \pi(g)- \pi(g')|^2 =
2 \sum\nolimits_{\pi\in A} d^2_\pi -2 \sum\nolimits_{\pi\in A} d_\pi \Re(\chi_\pi(g'g^{-1})).$$
Therefore
$$ \{ g\in G\mid d_A(g,1)<\vp N(A)^{1/2} \}=\{g\in G\mid \sum\nolimits_{\pi\in A}d_\pi \Re(\chi_\pi(g)) >(1-\vp^2/2)N(A)\}.$$
Thus (v) implies that for some $\vp>0$ (chosen so that
 $1-\vp^2/2=\d$) we have
$$ m_G(\{ g\in G\mid d_A(g,1)<\vp N(A)^{1/2} \})
\le \exp{(1-\beta N(A) )}
.$$
Then by \eqref{44} we find
$$ \vp N(A)^{1/2} (\beta N(A)-1)^{1/2}
 \le cc' \E \sup_{g\in G} |  S_A(g)  |,$$
 from which (vi) is immediate.\\
 Assume (vi). 
 Let $V_A$ denote the linear space
 formed of all random functions of the form
 $F({u})(g)=\sum\nolimits_{\pi\in A} d_\pi    \tr ({u}_\pi a_\pi \pi(g)) $
 with $a_\pi$ arbitrary in $M_{d_\pi}$.
 Let $\|.\|_A$ be the norm  induced on it by $L_1({\cl G}; C(G))$, i.e.
 $$\|F\|_{A}=\int_{\cl G} \sup\nolimits_{g\in G } |F({u})(g) | m_{\cl G}(d{u})  .$$
 With $S_A$, $N(A)$ as before, (v) tells us that
$$\|S_A\|_{A} \ge N(A)/C.$$
By Hahn-Banach,  there is $y\in A^*$  with $\|y\|_A^*\le 1$
such that  $\langle y,S_A\rangle =\|S_A\|_{A} \ge N(A)/C.$
Identifying $y\in A^*$ with a  family $(y_\pi)$ with   $y_\pi\in M_{d_\pi}$ ($\pi\in A $),
we may assume that
$\langle y,F\rangle= \sum\nolimits_{\pi\in A} d_\pi    \tr (y_\pi a_\pi ) $.
Then $\langle y,S_A\rangle =\sum\nolimits_{\pi\in A} d_\pi    \tr (y_\pi  ) $.
Moreover, by the translation invariance 
of the norm $\| .\|_A$ (on $\cl G$ and on $G$),
for any fixed ${u}',g'$ we have
 $\|F\|_{A} =  \| F(\ . {u}' )(g'\ .) \|_A$.
 By duality this implies
 $\|y\|^*_{A} =  \| (  \pi(g') y_\pi {u}'_\pi  ) \|^*_{A}$, and hence
 $$\left\|\left( \int \pi (g' ) y_\pi  \pi({g'}^{-1})  m_G(dg') \right)\right\|^*_{A}\le 1.$$
 But since the $\pi$'s are irreducible,
 $\int \pi (g' ) y_\pi  \pi({g'}^{-1})  m_G(dg')=I_{d_\pi} \tr(y_\pi)/d_\pi  $, and hence
 $$\left\|\left( I_{d_\pi} \tr(y_\pi)/d_\pi   \right)\right\|^*_{A}\le 1,$$
 which means that for any $F$
  $$ | \sum\nolimits_{\pi\in A}   \tr (y_\pi)\tr( a_\pi )|\le \|F\|_A.$$
 Since $  \|F\|_A$ is invariant if we replace $a_\pi $ by $|a_\pi| |\tr (y_\pi)|(\tr (y_\pi))^{-1}$ we also have
 \begin{equation}\label{45}| \sum\nolimits_{\pi\in A}   |\tr (y_\pi)|\tr |a_\pi| |\le \|F\|_A. \end{equation}
 In particular, in the   case $ F({u})(g)=  d_\pi    \tr ({u}_\pi   \pi(g)) $ for some $\pi\in A$, this implies 
 \begin{equation}\label{45b}
 |d_\pi \tr (y_\pi) |\le d_\pi^2
 . \end{equation}
Now recalling  that  $\langle y,S_A\rangle =\|S_A\|_{A} \ge N(A)/C$
 we have
 $\sum\nolimits_{\pi\in A}   d_\pi \tr (y_\pi)\ge N(A)/C$, and hence there is a subset $B\subset A$ 
 (namely $B=\{\pi\mid |\tr (y_\pi)| >  d_\pi/2C\}$) such that 
$  |\sum\nolimits_{\pi\in B}   d_\pi \tr (y_\pi) | \ge  N(A)/2C$  
and
 $|\tr (y_\pi)|> d_\pi/2C$ for any $\pi\in B$. 
By \eqref{45b}
 $$\sum\nolimits_{\pi\in B}   d^2_\pi \ge  | \sum\nolimits_{\pi\in B}  d_\pi \tr (y_\pi) | \ge  N(A)/2C,$$ 
 and  by \eqref{45} the randomly Sidon constant of $B$ is at most $2C$,
 so that (vii) holds by Corollary \ref{c3}.\\
 Assume (vii). Let $N(A)=\sum\n_{\pi\in A} d_\pi^2$.
 We will show that there is   $C_\d$ depending only on the $\d$
 appearing in (vii) 
 such that for any finite subset $A\subset \Lambda$, 
 and any $f\in L_2(G)$ with $\hat f$
supported in $A$ we have
 \begin{equation}\label{1nab} \| f\|_{\psi_2} \le   C_\d N(A)^{1/2} 
 \sup\n_{\pi \in A} \|\hat f(\pi)\|.\end{equation}
 To prove this we may assume that $\Lambda $ is finite.
 Let $C_\Lambda$ be the smallest constant  
 for which \eqref{1nab} holds for all $A\subset \Lambda$. 
 
 Fix $A\subset \Lambda$ and let $B\subset A$ as in (vii).
  Let $f\in L_2(G)$ with $\hat f$
supported in $A$.
 We will show that (vii) implies that
  \begin{equation}\label{2nab} \| f\|_{\psi_2} \le  C'_\d N(A)^{1/2} \sup\n_{\pi \in A} \|\hat f(\pi)\|
 + C_\Lambda (1-\d)^{1/2} N(A)^{1/2} \sup\n_{\pi \in A} \|\hat f(\pi)\|,\end{equation}
 where $C'_\d$ is a  constant depending only on $\d$.
 Indeed, by the triangle inequality we have
 $$ \| f\|_{\psi_2} \le 
\| \sum\n_{\pi \in B}   d_\pi \tr( {}^t\hat f(\pi) \pi)   \|_{\psi_2}
 +
 \|\sum\n_{\pi \in A\setminus B}    d_\pi \tr({}^t \hat f(\pi) \pi)     \|_{\psi_2}.$$
 Note
 $$ \| \sum\n_{\pi \in B}   d_\pi \tr( {}^t\hat f(\pi) \pi)   \|_{2}  = ( \sum\n_{\pi \in B}   d_\pi \tr| \hat f(\pi)|^2  )^{1/2}\le N(B)^{1/2} \sup _{\pi \in B} \|\hat f(\pi)\|\le N(A)^{1/2} \sup\n_{\pi \in A} \|\hat f(\pi)\|.$$
Thus, by Theorem \ref{t6} applied to the set $B$ there is $C'_\d$ such that 
$$ \| \sum\n_{\pi \in B}   d_\pi \tr({}^t \hat f(\pi) \pi)   \|_{\psi_2}
\le C'_\d  \| \sum\n_{\pi \in B}   d_\pi \tr( {}^t\hat f(\pi) \pi)   \|_{2}  \le C'_\d N(A)^{1/2} \sup\n_{\pi \in A} \|\hat f(\pi)\|,
$$
and by definition of $C_\Lambda$ we have
$$ \| \sum\n_{\pi \in A\setminus B}   d_\pi \tr( {}^t\hat f(\pi) \pi)   \|_{\psi_2}
\le   C_\Lambda N(A\setminus B)^{1/2} \sup\n_{  A\setminus B} \|\hat f(\pi)\|
\le C_\Lambda (1-\d)^{1/2} N(A)^{1/2} \sup\n_{  A} \|\hat f(\pi)\|,
$$ 
from which \eqref{2nab} is immediate.\\
Equivalently,  \eqref{2nab}  means
$ C_\Lambda  \le  C'_\d + C_\Lambda (1-\d)^{1/2}
$ and hence
$$C_\Lambda  \le  ( 1- (1-\d)^{1/2} )C'_\d ,$$
which proves   \eqref{1nab}. Thus we have proved (vii) $\Rightarrow$ (viii).

Now assume (viii) but \emph{we also assume  that $d_\pi=1$ for all $\pi \in \Lambda$}.
Let us denote
by $\ell_{2,1}(\Lambda)$ the classical Lorentz space of scalar sequences
indexed by $\Lambda$.  Explicitly, given a 
scalar family $a=(a_\pi)$ (say, tending to $0$ at $\infty$), we denote 
 by $(a_n^*)$ the non-increasing rearrangement
of the numbers $\{| \hat f(\pi)  |\mid \pi \in \Lambda\}$. Let 
$$\| a \|_{2,1}=
\sum\n_1^\infty a_n^*/n^{1/2}.$$
The space $\ell_{2,1}(\Lambda)$ is defined as
formed of those  $a$ for which this sum is finite.
It is well known that $\|\ \|_{2,1}$ is equivalent to a norm on $\ell_{2,1}(\Lambda)$ (we will not use this).
Note that   \eqref{4.17}
simply means $
\| f\|_{\psi_2} \le   C  |A|^{1/2} 
 \sup\n_{ A} |\hat f(\pi)|.$
 This implies  
 $$ \| f\|_{\psi_2} \le   3C  \| (\hat f(\pi))\|_{2,1}.
 $$
 Indeed, using the disjoint decomposition
 of $\Lambda$ associated to $\{a_n^*\}=\cup_{k\ge 0} \{a_n^*\mid 2^k\le n<2^{k+1}\}$, we find
  $$ \| f\|_{\psi_2} \le   C  \sum\n_{k\ge 0}
    2^{k/2} a^*_{2^k}   \le  3 C   \sum\n_1^\infty a_n^*/n^{1/2}= 3 C   \| (\hat f(\pi))\|_{2,1}.
 $$
Let $1<p<2$. Let $2<p'<\infty $ be the conjugate, so that $1/p+   1/p'=1$.
Let
$$\| (\hat f(\pi))\|_{p}=(\sum\n_{  \Lambda} | \hat f(\pi)  |^p )^{1/p}.$$
We claim that there is a constant $\chi$ depending only on $p$  and  $C$
such that for any $f$ with $\hat f$
supported in $\Lambda$
 \begin{equation}\label{3nb} \|f\|_{\psi_{p'}} \le \chi     \| (\hat f(\pi))\|_{p}.\end{equation}This follows from a rather simple interpolation argument.
 Indeed, we have $\| (\hat f(\pi))\|_{p}=(\sum {a_n^*}^p)^{1/p}$.
 Fix a number $N\ge 1$.
 Let $f=f_0+f_1$ be the decomposition
 of $f$ associated to $\{a_n^*\}=  \{a_n^*\mid 1 \le n \le N\}  \cup  \{a_n^*\mid    n > N\}$, so that
 $$\| f_0\|_{\infty}  \le   \sum\n_1^N  a_n^*
  \text{    and   }
 \| f_1\|_{\psi_2} \le  3 C  \sum\n_{n > N} a_n^*/n^{1/2}.$$
 By homogeneity we may assume $\| (\hat f(\pi))\|_{p}=1$.
 Then ${a_n^*}\le n^{-1/p}$ for all $n\ge 1$. Therefore
 $\sum\n_1^N a_n \le \sum\n_1^N  n^{-1/p }\le p' N^{1/p'}$
 and 
 $\sum\n_{n > N} a_n^*/n^{1/2}\le \sum\n_{n > N}  n^{-1/p-1/2} \le \frac{2p'}{p'-2}N^{1/p'-1/2}$.\\ Let $c= p' N^{1/p'}$ so that $\|f_0\|_\infty \le c$.
 We have
 $$\P(\{|f|>2c\})\le \P(\{|f_0|>c\})+\P(\{|f_1|>c\}) =\P(\{|f_1|>c\}).$$
 But  we have
$$\P(\{|f_1|>c\}) \le e \exp{-c^2/\|f_1\|_{\psi_2}^2 }$$
and since $\|f_1\|_{\psi_2} \le  3 C   \frac{2p'}{p'-2}N^{1/p'-1/2} 
= 3 C   \frac{2p'}{p'-2}    (c/p')^{1-p'/2}$
  we find after substituting 
  $$\P(\{|f|>2c\})\le  \P(\{|f_1|>c\}) \le e \exp{-(\chi' c^{p'}) } ,$$
  where $\chi'$ is a constant depending only on $p$ and $C$.
  This has been established for   $c$'s
  of the form $c= p' N^{1/p'}$, but it is easy to obtain all values
  by interpolating between two such values.
  From this, our claim \eqref{3nb}
 is now  immediate (recall (iii) $\Rightarrow$ (ii) in Lemma \ref{41} and Remark \ref{p42}).
  From this claim, we obtain that $\Lambda$ is Sidon by Lemma \ref{ep5}.
  The case when the dimensions $d_\pi$ are uniformly bounded by a fixed number $D$
  follows by a straightforward modification of the same argument (but 
  all the resulting bounds will depend on $D$). In any case, this shows
   that (viii) $\Rightarrow$ (i)
  in the latter case.
\end{proof}
\begin{rem}\label{err} In \cite{Pi2} it is erroneously claimed that (viii) $\Rightarrow$ (i)
in full generality in the nonAbelian case. However we recently noticed that the proof has a serious gap, and we now believe that the result does not hold. Indeed, if 
$\Lambda=\{\pi_n\mid n\in \N\}$ and if the dimensions
of the representations in $\Lambda$ form a sequence 
such that $d^2_n\ge d^2_1+\cdots+ d^2_{n-1}$,  
  then the mere knowledge that
the individual singletons $\{\pi_n\}$  
are Sidon with a fixed constant (Rider \cite{Ri2} called this ``local Sidon property" )
is sufficient to guarantee that (vii) holds, but it seems unlikely
that this is enough to force $\Lambda$ to be Sidon.
 \end{rem}
Although we state it in full generality, the next result
is significant only if the dimensions of the irreps $\pi_n $ are unbounded.

\begin{thm}[Characterizing SubGaussian characters]\label{t4} Let $G_n$ be a sequence of compact groups, 
 let $\pi_n \in \hat{G_n}$ be nontrivial irreps and let  $\chi_{n}=\chi_{\pi_n}$ as well as
 $d_{n}=d_{\pi_n}$. The following are equivalent.
\item[(i)]   There is a constant $C$ such that the singletons $\{\pi_n\}\subset \hat G_n$
are Sidon with constant $C$, i.e. we have
$$\forall n\forall a\in M_{d_n}\quad \tr|a| \le C\sup_{g\in G} |\tr (a\pi_n(g))|.$$
\item[(ii)] There is a constant $C$ such that
$$\forall n\quad \|\chi_{ n}\| _{{{\psi_2}  }} \le C.$$
\item[(ii)'] There is $\beta>0$ such that
$$\forall n\quad \int \exp{ (\beta |\chi_{ n}|^2)} dm_{G_n}\le e.$$
\item[(ii)''] There is a constant $C$ such that for any $t\in \R$
$$\forall n\quad \int \exp{ ( t \chi_{ n} -Ct^2)} dm_{G_n}\le 1.$$
\item[(iii)]  For each $0<\delta<1$ there is $0<\theta<1$ such that
$$\forall n\quad m_{G_n} \{Re (\chi_{ n}) >\delta d_n  \} \le e \theta^{d_n^2}.$$ 
\item[(iii)']  For each $0<\delta<1$ there is $0<\theta<1$ and $D>0$ such that
for any $n$ with $d_n>D$ we have
$$   m_{G_n} \{Re (\chi_{ n}) >\delta d_n  \} \le  \theta^{d_n^2}.$$
\item[(iii)'']  There are $0<\delta<1$ and $0<\theta<1$ such that
$$\forall n\quad m_{G_n} \{Re (\chi_{ n}) >\delta d_n  \} \le e \theta^{d_n^2}.$$
\item[(iv)]There is a constant $C$ such that
$$\forall n\quad d_n\le C \int_{U(d_n)} \sup\nolimits_{g\in G_n} | \tr ({u}  \pi_n(g)) | m_{U(d_n)}(d{u})  .$$

\end{thm}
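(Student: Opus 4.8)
The plan is to obtain Theorem~\ref{t4} as the ``singleton'' instance of Theorems~\ref{t6} and~\ref{t3}, taking care that every constant produced depends only on the constant in the hypothesis, not on $n$ or on $d_n$. The dictionary is: for $\Lambda=\{\pi_n\}\subset\hat{G_n}$ one has $N(\Lambda)=d_n^2$; the functions $f$ with $\hat f$ supported in $\Lambda$ are exactly $f=d_n\,\tr({}^t a\,\pi_n)$ with $a\in M_{d_n}$; and for $a=\tfrac1{d_n}I$ this is $f=\chi_n$, which satisfies $\|\chi_n\|_2=1$ and $\E\chi_n=0$ by orthonormality of irreducible characters. \textbf{The key point} --- and what distinguishes Theorem~\ref{t4} from the bounded-dimension proviso in Theorem~\ref{t3} --- is that when $\Lambda$ is a singleton the subset $B\subset\Lambda$ appearing in condition (vii) of Theorem~\ref{t3} is forced to equal $\Lambda$; hence the implication chain of Theorem~\ref{t3} can be closed back to Sidonicity already at the ``randomly Sidon'' stage, through Corollary~\ref{c3}, whose quantitative form (via Theorem~\ref{t2}, Corollary~\ref{c1}, and Rider's Theorem~\ref{t1}) gives a Sidon constant that is a function of the randomly Sidon constant alone. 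One never needs condition (viii) of Theorem~\ref{t3}, which was the sole place the bounded-dimension hypothesis entered there.

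The plan is then to run the cycle (i) $\Rightarrow$ (ii) $\Rightarrow$ (iii) $\Rightarrow$ (iv) $\Rightarrow$ (i), attaching the primed variants to their unprimed partners. For (i) $\Rightarrow$ (ii): apply the implication (i) $\Rightarrow$ (ii) of Theorem~\ref{t6} to $\Lambda=\{\pi_n\}$ and evaluate at $f=\chi_n$, obtaining $\|\chi_n\|_{\psi_2}\le C'\|\chi_n\|_2=C'$ with $C'$ depending only on the Sidon constant (the proof of Theorem~\ref{t6} produces precisely such a universal dependence). The three subgaussian conditions (ii), (ii)$'$, (ii)$''$ are interchangeable by the elementary Orlicz-space / moment-generating-function facts of Lemmas~\ref{41} and~\ref{42} (recall $\E\chi_n=0$, $\|\chi_n\|_2=1$); for (ii)$''$ one works with $\Re\chi_n$, noting that $\int\exp(t\chi_n)\,dm_{G_n}$ is real and dominated by $\int\exp(t\,\Re\chi_n)\,dm_{G_n}$ because $\chi_n$ and $\bar\chi_n$ are equidistributed. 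For (ii) $\Rightarrow$ (iii) one uses Chebyshev: $\E\exp(|\chi_n/C'|^2)\le e$ gives, for $s=\delta d_n$, $m_{G_n}\{\Re\chi_n>\delta d_n\}\le m_{G_n}\{|\chi_n|\ge\delta d_n\}\le e\,\theta^{d_n^2}$ with $\theta=\exp(-\delta^2/C'^2)<1$. The equivalences among (iii), (iii)$'$, (iii)$''$ are routine: one weakens the exponential constant, and absorbs the finitely many dimension values $d_n\le D$ by enlarging $\theta$, using $m_{G_n}\{\cdot\}\le1$.

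The two substantive steps are (iii)$''$ $\Rightarrow$ (iv) and (iv) $\Rightarrow$ (i). For the first, one observes that (iii)$''$ is precisely condition (v) of Theorem~\ref{t3} written for the singleton $\{\pi_n\}$: the associated metric satisfies $d_A(g,1)^2=2d_n^2-2d_n\,\Re\chi_n(g)$, so $\{g:\ d_A(g,1)<\vp d_n\}=\{\Re\chi_n>(1-\vp^2/2)d_n\}$ (take $\vp=\sqrt{2(1-\delta)}$), and (iii)$''$ bounds its measure by $e\,\theta^{d_n^2}$. Running the proof of (v) $\Rightarrow$ (vi) of Theorem~\ref{t3}, i.e.\ feeding this estimate into Sudakov's minoration \eqref{44} for the stationary one-term random Fourier series $g\mapsto d_n\,\tr(u\,\pi_n(g))$, yields $d_n\le C\int_{U(d_n)}\sup_g|\tr(u\,\pi_n(g))|\,du$ for all $n$ with $d_n$ above a threshold depending only on $\delta,\theta$; for the remaining (finitely many) dimension values the bound is automatic with constant $D$, since $\int_{U(d_n)}\sup_g|\tr(u\,\pi_n(g))|\,du\ge(\int_{G_n}|\tr(u\,\pi_n(g))|^2\,dm_{G_n}(g))^{1/2}=1$ by Schur orthogonality. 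For (iv) $\Rightarrow$ (i): (iv) is condition (vi) of Theorem~\ref{t3} for $\{\pi_n\}$, so the argument proving (vi) $\Rightarrow$ (vii) there (Hahn--Banach together with \eqref{45} and \eqref{45b}) gives, since $B$ must be $\{\pi_n\}$, that $\{\pi_n\}$ is randomly Sidon with constant $\le 2C$; Corollary~\ref{c3} then upgrades this to a Sidon constant depending only on $C$, uniformly in $n$ --- which is (i), and also re-closes the loop to (ii).

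The main obstacle here is bookkeeping rather than a new idea: ensuring that the two genuinely deep inputs --- Rider's spectral gap (Theorem~\ref{t1}, entering through Corollary~\ref{c3}) and the Dudley--Fernique/Sudakov lower bound for stationary Gaussian processes (entering through \eqref{44}) --- are invoked so as to yield $n$-independent constants, and in particular noticing that the cycle must be closed at ``randomly Sidon'' and not at condition (viii) of Theorem~\ref{t3}. The secondary wrinkles to watch are the uniform treatment of the finitely many small dimension values (handled above via Schur orthogonality) and the complex-valued nature of $\chi_n$ in the clause involving (ii)$''$.
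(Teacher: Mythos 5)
Your proposal is correct and matches the paper's own proof in all essentials: both proofs reduce Theorem~\ref{t4} to the singleton instance of Theorem~\ref{t3}, traverse the chain (i) $\Rightarrow$ (ii) $\Rightarrow$ (iii) $\Rightarrow$ (iv) using the implications (i) $\Rightarrow$ (ii) $\Rightarrow$ (iv) and (v) $\Rightarrow$ (vi) of Theorem~\ref{t3}, and close the loop by the same decisive observation that for a singleton $\Lambda=\{\pi_n\}$ condition (vii) of Theorem~\ref{t3} forces $B=A=\Lambda$, so Sidonicity follows directly from the randomly-Sidon estimate and Corollary~\ref{c3}, without any appeal to condition (viii). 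Your added bookkeeping (Schur orthogonality to handle the finitely many small $d_n$ in (iii)$''$ $\Rightarrow$ (iv), and the reality of $\int\exp(t\chi_n)\,dm_{G_n}$ in (ii)$''$) is sound and merely makes explicit what the paper leaves to the reader.
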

\begin{proof} Note that the properties (ii) (ii)' and (ii)'' are just reformulations of each other
by Lemmas \ref{41} and \ref{42}. 
Note that the content of (iii) and  (iii)'' is void when $ e \theta^{d_n^2}\ge 1$.
Thus (iii) $\Rightarrow$ (iii)' $\Rightarrow$ (iii)'' are trivial.
The implication (i) $\Rightarrow$ (ii) $\Rightarrow$ (iii)
is a  special case of (i) $\Rightarrow$ (ii) $\Rightarrow$ (iv) in Theorem \ref{t3}
and (iii)'' $\Rightarrow$ (iv) is a  special case  of (v) $\Rightarrow$ (vi) in Theorem \ref{t3}.
Moreover, we may invoke the implication (vi) $\Rightarrow$ (vii) in Theorem \ref{t3} for our special case of singletons.
Then the Corollary boils down to the observation that
if $\Lambda$ is a singleton the implication (vii) $\Rightarrow$ (i) in Theorem \ref{t3}
trivially holds (take $A=\Lambda$, then necessarily $B=\Lambda$). 
\end{proof}

 Although I never had  concrete examples, I believed naively for many years that  Theorem \ref{t4} could be applied to finite groups.
  To my surprise, Emmanuel Breuillard
  showed me that it is not so (and he pointed out Turing's paper
  \cite{Tu} that already emphasized that general phenomenon, back in 1938).
  It turns out that, when the groups $G_n$ are finite (or amenable as discrete groups),   Theorem \ref{t4} can hold only if the dimensions $d_n$
  remain bounded. 
  The reason lies in the presence of large Abelian subgroups
  with index of order $\exp{o(d_n^2)}$. The latter follows from the 
  quantitative refinements in  \cite{Wei,Co} of a classical Theorem of Camille Jordan
  on finite linear groups.
  See the forthcoming paper \cite{BreP}
  for details.

\section{Some questions about best constants}\label{bc}

We denote by $A_p,B_p$ the best possible constants in the classical Khintchine inequalities. These inequalities say that
 for any scalar sequence $x\in \ell_2$ we have
$$A_p(\sum |x_j|^2)^{1/2} \le  (\int |\sum \vp_j x_j |^p \ d{\bb P} )^{1/p} \le B_p (\sum |x_j|^2)^{1/2}.$$ 
After much effort by many authors,
 the exact values of $A_p,B_p$ were obtained 
 by Szarek and Haagerup (see \cite{H1+,Sz}).
Let $p_0=1.87...$ be the unique solution in the interval $]1,2[$ of  the equation $2^{1/2-1/p}=\gamma_p$
(or explicitly $\Gamma((p+1)/2)=\sqrt{\pi}/2$),   then   Haagerup (see \cite {H1+}) proved  :
\begin{equation}\label{kh-best}
A_p=2^{1/2-1/p} \quad 0<p\le p_0,
\end{equation}
\begin{equation}\label{kh-best2}
A_p= \gamma_p \quad  p_0\le p\le 2,
\end{equation}
\begin{equation}\label{kh-best3}
B_p=\gamma_p\quad  2\le p<\infty.
\end{equation}

The bounds $A_p\le   \gamma_p  $  for $p\le 2$ and $B_p\ge \gamma_p$
for $p\ge 2$ are easy consequences of the Central Limit Theorem, applied
to $\lim_{n\to \infty}(\vp_1+\cdots+\vp_n)/\sqrt n$. The bound
$A_p\le 2^{1/2-1/p}$ is immediate by
 considering  the function $(\vp_1+\vp_2)/\sqrt 2$.

For the complex analogue of these inequalities, the best constants are also
known: if we replace   the sequence $(\vp_n)$ (independent choices of signs) 
by an i.i.d. sequence $(z_n)$ uniformly distributed over
$\{z\in \C\mid |z|=1\}$, then the same inequalities hold
but now the best constants, that we denote  $A_p[\T],B_p[\T]$,
are   $A_p[\T]= \gamma^\C_p $ if   $1\le p\le 2$ and $B_p[\T]= \gamma^\C_p$ if    $p\ge 2$, where $\gamma^\C_p $ is the $L_p$-norm
of a standard complex-valued Gaussian variable normalized in $L_2$.
Indeed, in analogy with Haagerup's result, Sawa \cite{Saw,Saw2} proved that there is a phase transition
at a number $p_0^\C$, but now $0<p_0^\C=0.475...<1$!

Let $G$ be a matrix group, such as $U(d),SU(d),O(d),SO(d)$.
Let $\pi_n:\ G^\N \to G$ denote the $n$-th coordinate.
Let $E[G]$ be the linear span of the matrix coefficients
of $\Lambda=\{\pi_n\mid n\in \N\}$. Thus a typical element
of $E[G]$ can be written as a finite sum
$f=\sum \tr(\pi_n x_n)$, where $(x_n)$ is a finitely supported 
family in $M_d$. Then $\|f\|_2= (d^{-1}\sum \tr|x_n|^2)^{1/2}$.

We denote by $A_p[G],B_p[G]$  the best (positive) constants $A,B$ in the 
following inequality 
\begin{equation} \forall  f\in E[G]\qquad \label{bc1}A\|f\|_2\le \|f\|_p \le B \|f\|_2.\end{equation}

Let $\Gamma^u=\prod_{d\ge1} U(d)$,  ${\Gamma}^o=\prod_{d\ge1} O(d)$. 
We set 
${\cl G}^u= (\Gamma^u)^\N$ and ${\cl G}^o= (\Gamma^o)^\N$. We define
  similarly ${\cl G}^{su}$ and ${\cl G}^{so}$. 

\medskip 

\noindent{\bf Problem:} {\it Let $1\le p\not=2<\infty$. What are the values of $A_p[G],B_p[G]$
for $G=U(d)$ for $d>1$ ? \\ Same question for $SU(d),O(d),SO(d)$.\\
It is natural to consider also the best constants $A^c_p[G],B^c_p[G]$
for which \eqref{bc1} holds for all central functions $f$,
i.e. all $f$ of the form $f= \sum \tr(\pi_n ) x_n$ where $(x_n)$ is a finitely supported 
family in $\C$.  \\
Another natural question is to find the best $A_p[G],B_p[G]$
for $G={\cl G}^u$ and similarly when $G$ is either ${\cl G}^o$, 
${\cl G}^{su}$ or ${\cl G}^{so}$.}

The constants $A_p[{\cl G}^u],B_p[{\cl G}^u]$ can equivalently be viewed as the best constants
in \eqref{bc1} when $f$ is any finite sum of the form
$$f(\omega)= \sum \tr(\rho_n(\omega) x_n)\quad (x_n\in M_{d_n})$$
where $\Omega=\prod U(d_n)$ is equipped with its uniform (Haar) probability,
$\rho_n:\ \Omega\to U(d_n)$ is the $n$-th coordinate  and 
 $(d_n)$ is an arbitrary sequence of integers (and similarly for $o,su,so$).
Then $\|f\|_2= (\sum d_n^{-1} \tr|x_n|^2)^{1/2}$.

Consider  a (real or complex) Banach space $B$.
Recall that a $B$-valued random variable $X$ is called
Gaussian if for any real linear form $\xi: \ B\to \R$,
the real valued variable $\xi(X)$ is
Gaussian.  By definition, the covariance of a $B$-valued random variable $X$ 
is the bilinear form $(\xi,\xi')\mapsto \E(\xi(X)\xi'(X))$.  
  Let $g^{(d)}$ be a Gaussian random matrix
with the same covariance as $x\mapsto \pi_n(x)$ (the latter does not depend on $n$), so that, by the central limit theorem (CLT in short),
$n^{-1/2} (\pi_1+\cdots+\pi_n)$ tends in distribution to $g^{(d)}$.
In particular,
When $G=SO(d)$ or $O(d)$ (resp.   $G=SU(d)$ or $U(d)$) $n^{-1/2} (\tr(\pi_1)+\cdots+\tr(\pi_n))$
tends in distribution to a standard real (resp. complex) Gaussian random variable
normalized in $L_2$.  
It follows that
$B_p\ge B^c_p\ge \gamma^\R_p $ (resp. $B_p\ge B^c_p\ge \gamma^\C_p $)  for all $p\ge 2$
and $A_p\le A^c_p\le  \gamma^\R_p$
(resp. $A_p\le A^c_p\le \gamma^\C_p $)  for all $p\le 2$.
 
 In \cite[\S 36, p. 390]{HR}
 it is proved that
 $$\forall p\in 2\N\quad B_p[{\cl G}^u]\le  2 ((p/2)!)^{1/p}$$
with an improved bound for $p=4$ namely
 $B_4[{\cl G}^u] \le  2.$ A fortiori, $B_4[U(d)] \le  2$ for all $d\ge 1$.
 Since $\gamma^\C_4=2$
 this implies $$B_4[U(d)] =B^c_4[U(d)] =B_4[{\cl G}^u] =B^c_4[{\cl G}^u] = 2.$$ 
 Hewitt and Ross quote
 \cite{FTR2} but they also credit Rider and quote another paper of his
 entitled ``Continuity of random Fourier series" that apparently never appeared.
 Moreover, by a result due to Helgason \cite{Hel}
 $$A_1[{\cl G}^u] \ge  1/\sqrt{ 2}  .$$
  Let $G=U(d)$ (resp. $G=O(d)$). Let $(g_n^{(d)})$
 be an i.i.d. sequence of copies of $g^{(d)}$. 
 Following \cite{MaPi}, we describe in 
 Lemma \ref{R72} a very general comparison
principle
 showing that for some absolute constant $C_0$
 the family of coefficients $\{\pi_n(i,j)\}$
 is the image of $\{g_n^{(d)}(i,j)\}$ under a positive operator
 of norm at most $C_0$ on $L_p$.
 In  the proof of Lemma \ref{R72} we show this with
 $$C_0\le \chi =\sup\n_d  (d^{-1}\E\tr|g^{(d)}| )^{-1}<\infty,$$
 but we do not know the best value of $C_0$.
 In any case, this reasoning implies
 $$\forall p\ge 2\quad B_p[U(d)]\le
 (d^{-1}\E\tr|g^{(d)}| )^{-1} \gamma^\C_p \text{  and  }
  B_p[ {\cl G}^u] \le \chi  \gamma^\C_p  ,$$
 and similarly for $O(d)$  with the analogue of $g^{(d)}$ that has real valued Gaussian entries.
 
 \begin{rem} Let $G$ be a compact group, let $\Lambda\subset \hat G$,
 and let $E_\Lambda$ be the linear span of the matrix coefficients
 of the representations in $\Lambda$. Let $A_p^\Lambda$
 and $B_p^\Lambda$ be the best constants for which \eqref{bc1} 
holds for any $f\in E_\Lambda$.
Then, if $p>2$, $B_p^\Lambda$ can be interpreted as the
constant of $\Lambda$  as a
$\Lambda(p)$-set in Rudin's sense \cite{Ru}. See \cite{Bo}
for  a rather recent survey  on $\Lambda(p)$-sets. A similar
interpretation is valid for $A_p^\Lambda$ 
and $\Lambda(p)$-sets when $1<p<2$,
but ``true" examples of such sets are lacking for $1<p<2$.

 \end{rem}
 
 \begin{rem}
 One can also ask what are the best constants in \eqref{bc1}
 with respect to the usual non-commutative $L_p$-spaces
 when $f$ is in the linear span of free Haar unitaries
 in the sense of \cite{VDN}. 
 Now semicircular 
  (or circular) variables replace the Gaussian ones,
  when invoking the CLT, so that   $B_p\ge \|x\|_p$
  where $x$ is a semicircular (or circular)  variable 
  in the sense  of \cite{VDN} normalized in $L_2$ (note
  $\|x\|_\infty=2$). For these free Haar unitaries,  Bo\.zejko's inequality
  in  \cite{Boz}
implies
 that for any even integer
 $p=2n$ we have  $B_{2n}=(\frac{1}{n+1} {{2n}\choose{n} }  )^{1/2n}$.
 The latter number is again the $L_p$-norm of a ``free Gaussian",
 i.e. a semicircular  variable normalized in $L_2$.
  In particular (for this see also   Haagerup's     \cite{Haa1}) 
  we have $B_p\le 2$ for all $p\ge 2$. Related results appear in 
  \cite[Lemma 7]{RX2}. \\  
 See \cite{Buch2} for   interesting results on this theme.
 \end{rem}

 \section{A new approach to Rider's spectral gap}\label{new}
   
We now show how the new method presented in \cite{Pi3} 
yields another proof of Rider's spectral gap estimate.
We do not obtain the nice precise description
of the measure $\mu_{k,n}$ that possesses the desired spectral gap property,
as in Theorem \ref{t1},
but we do get a more refined quantitative bound.

We need to recall the definitions of the projective 
and injective tensor product
 norms $\|\ \|_\wedge$
 and  $\|\ \|_\vee$ on the algebraic tensor product $L_1(m_1)\otimes L_1(m_2)$ of two arbitrary $L_1$-spaces.
Let   $T=\sum x_j\otimes y_j\in L_1(m_1)\otimes L_1(m_2)$.
Then
 $$\|T\|_\wedge=\int |\sum x_j (t_1) y_j (t_2)|dm_1(t_1)dm_2(t_2)$$
 $$\|T\|_\vee=\sup \{|\sum \langle x_j ,\psi_1\rangle   \langle y_j ,\psi_2\rangle |\mid \|\psi_1\|_\infty \le 1,
 \|\psi_2\|_\infty\}.$$
 
Let $(d_k)_{k\in I}$ be an arbitrary collection of integers.
   Let $G=\prod_{k\in I} U(d_k)$.
Let $u\mapsto u_k \in U(d_k)$ denote the coordinates on $  G$.
We know that the family $\{d_k^{1/2} u_k(i,j)\}$
is subGaussian (see Lemma \ref{R72} or (i) $\Rightarrow$  (ii) in Theorem \ref{t6}).  
Let
$$S=\sum\n_{k,i,j} (d_k^{1/2} u_k(i,j))\otimes (d_k^{1/2} u_k(j,i)).$$
Actually, by Lemma \ref{R72}, in the terminology of  \cite{Pi3}, the family
$\{d_k^{1/2} u_k(i,j)\}$ is $C_0$-dominated by $\{d_k^{1/2} g_k(i,j)\}$.
Therefore,
 by \cite[Theorem 1.10]{Pi3}, for any $0<\vp<1$ there is a decomposition
 $$S= t+r$$
  for some $t,r\in L_1(G) \otimes L_1(G)$ satisfying
  $$\|t\|_{\wedge}\le w(\vp)\quad\text{and}\quad \|r\|_{\vee}\le \vp,$$
  where $w(\vp)$ depends only on $\vp$ and $w(\vp)=O(\log(1/\vp))$ when $\vp\to 0$.
  
  Consider the mapping $P:\ L_1(G) \otimes L_1(G) \to  L_1(G)$ defined by
  $P(x\otimes y)=x \ast y$.
  
  A simple verification shows that since $u_k=u_k \ast u_k$ or equivalently
  $u_k(i,j)=\sum\n_{\ell} u_k(i,\ell)\ast u_k(\ell,j)$
  
   $$P(S)=\sum\n_{k} d_k \sum\n_{i,j} u_k(i,j)\ast u_k(j,i)=\sum\n_{k} d_k \sum\n_{i} u_k(i,i) =\sum\n_{k} d_k \tr (u_k).$$
   Moreover, for any $t,r\in L_1(G) \otimes L_1(G)$ we have
   $$\|P(t)\|_1\le \|t\|_{\wedge}$$
   and 
   $$\|P(r)\|_*\le \|r\|_{\vee}$$
   where
   $$\forall f\in L_1(G)   \quad\|f\|_*=\sup\n_{\pi \in \hat G} \|\hat {f}(\pi)\|.$$
   Indeed, note
   $\|P(f)\|_*=\|T_{P(f)}\|_{B(L_2(G))}$ where
   $T_{P(f)}$ is the convolutor $ x\mapsto x\ast P(f)$. Then
by a well known consequence of Grothendieck's
 theorem (obtained using translation invariance), there is a constant $K$ 
 such that $$(K)^{-1}\|T_{P(f)}\|_{B(L_2(G))}\le \|T_{P(f)}\|_{B(L_\infty(G),L_1(G))}=
   \|f\|_\vee.$$
   Here $K$ is the complex
Grothendieck constant.  Actually (see \cite{Pig}) $K$
   is not really needed here in view of the bound $\|r\|_{\gamma_2^*}\le \vp$
   directly obtained in \cite{Pi3}).
   Indeed, if $f=\sum x_k \otimes y_k$ we have
   for any $\varphi,\psi$ in   $L_\infty(G)$ 
   $|\sum \langle\varphi,x_k\rangle \langle\psi,y_k\rangle|\le \|f\|_\vee
   \|\varphi\|_\infty  \|\psi\|_\infty$
   and hence by a suitable averaging (replacing $\varphi,\psi$ by suitable translates)
    $$\sup\n_{s,t\in G}|\sum  (\varphi \ast x_k)(s)\  (y_k\ast \psi)(t) |\le 
   K \|f\|_\vee   \|\varphi\|_2  \|\psi\|_2$$
   and hence
   $$\sup\n_{t\in G} |\sum   \varphi \ast(\sum x_k\ast y_k)\ast \psi (t) |
   \le 
   K \|f\|_\vee   \|\varphi\|_2  \|\psi\|_2$$
from which $\|P(f)\|_*=\|\sum x_k\ast y_k\|_*\le K \|f\|_\vee$ follows immediately.
   
   Thus we obtain
   \begin{thm} If the index set $I$ is finite 
   there is a decomposition 
   $\sum\n_{k\in I} d_k \tr(u_k)= T+R$
   with $T,R\in L_1(G)$ such that $\|T\|_1\le w(\vp)$ and
   $\|R\|_*\le K\vp$. If $I$ is infinite
   there is a similar decomposition within formal Fourier series
   with $T\in M(G)$ such that $\|T\|_{M(G)}\le w(\vp)$ and
   $\|R\|_*\le K\vp$. 
   \end{thm}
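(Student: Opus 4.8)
The plan is to push the decomposition of the element $S$ constructed just above forward through the convolution map $P$, and then treat the infinite case by a weak$^*$ compactness argument.

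First, suppose $I$ is finite. I would apply \cite[Theorem 1.10]{Pi3} to $S=\sum_{k,i,j}(d_k^{1/2}u_k(i,j))\otimes(d_k^{1/2}u_k(j,i))$; this is legitimate since, by Lemma \ref{R72}, the family $\{d_k^{1/2}u_k(i,j)\}$ is $C_0$-dominated by the Gaussian family $\{d_k^{1/2}g_k(i,j)\}$. This produces, for each $0<\vp<1$, a splitting $S=t+r$ in $L_1(G)\otimes L_1(G)$ with $\|t\|_\wedge\le w(\vp)$, where $w(\vp)=O(\log(1/\vp))$, and $\|r\|_\vee\le\vp$. I then set $T=P(t)$ and $R=P(r)$. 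Since $u_k=u_k\ast u_k$, the computation carried out above gives $P(S)=\sum_{k\in I}d_k\tr(u_k)$, hence $T+R=\sum_{k\in I}d_k\tr(u_k)$; and the two estimates $\|P(t)\|_1\le\|t\|_\wedge$ and $\|P(r)\|_*\le K\|r\|_\vee$ proved above yield $\|T\|_1\le w(\vp)$ and $\|R\|_*\le K\vp$. This part is essentially just a reassembly of the ingredients already in place.

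For infinite $I$, I would run the finite case over the net of all finite subsets $I'\subset I$ directed by inclusion, obtaining $T_{I'},R_{I'}\in L_1(G)$ with $T_{I'}+R_{I'}=\sum_{k\in I'}d_k\tr(u_k)$, $\|T_{I'}\|_1\le w(\vp)$ and $\|R_{I'}\|_*\le K\vp$. Viewing $L_1(G)\subset M(G)$ isometrically and using weak$^*$ compactness of the ball of radius $w(\vp)$ in $M(G)$, I would extract a weak$^*$ cluster point $T\in M(G)$ of the net $(T_{I'})$, so that $\|T\|_{M(G)}\le w(\vp)$ and, along the converging subnet, $\hat T(\pi)=\lim_{I'}\hat{T_{I'}}(\pi)$ for every $\pi\in\hat G$ (Fourier coefficients being integrals against continuous matrix coefficients). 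One then defines $R$ as the formal Fourier series whose coefficient at $\pi$ is the Fourier coefficient of $\sum_{k\in I}d_k\tr(u_k)$ at $\pi$ minus $\hat T(\pi)$. The key observation is that for each fixed $\pi$ the Fourier coefficient of $\sum_{k\in I'}d_k\tr(u_k)$ at $\pi$ already equals its final value once $I'$ contains the (unique, if any) index $k$ with $u_k\cong\pi$, since these coordinate representations are pairwise inequivalent; hence $\hat R(\pi)=\lim_{I'}\hat{R_{I'}}(\pi)$, and the uniform bound $\|\hat{R_{I'}}(\pi)\|\le\|R_{I'}\|_*\le K\vp$ passes to the limit, giving $\|R\|_*\le K\vp$ and $T+R=\sum_{k\in I}d_k\tr(u_k)$ as an identity between formal Fourier series.

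The only genuine subtlety is precisely this last passage to the limit: $L_1(G)$ is not weak$^*$ closed in $M(G)$, so $T$ must be allowed to be a measure rather than an integrable function --- which is exactly what the statement asserts --- and $R$ has to be read as a formal Fourier series, since a priori it is neither a function nor a measure, only its Fourier transform being controlled, in the $\|\cdot\|_*$ norm. Everything else is a direct consequence of the inequalities $\|P(t)\|_1\le\|t\|_\wedge$, $\|P(r)\|_*\le K\|r\|_\vee$ and the identity $P(S)=\sum_{k\in I}d_k\tr(u_k)$ established in the discussion preceding the statement.
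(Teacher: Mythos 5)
Your finite-$I$ argument is exactly the paper's: the theorem is stated as a ``thus we obtain'' summary of the preceding discussion, and you correctly reassemble the three ingredients already in place --- the decomposition $S=t+r$ from \cite[Theorem 1.10]{Pi3}, the identity $P(S)=\sum_k d_k\tr(u_k)$, and the bounds $\|P(t)\|_1\le\|t\|_\wedge$, $\|P(r)\|_*\le K\|r\|_\vee$. For infinite $I$, the paper merely asserts that ``there is a similar decomposition within formal Fourier series'' without elaboration; your weak$^*$ compactness argument over the net of finite subsets is a correct and clean way to supply the missing details, and you rightly flag the two places that need care: $T$ must be allowed to land in $M(G)$ since $L_1(G)$ is not weak$^*$ closed there, and $R$ is defined only through its Fourier coefficients, which stabilize along the net because each coordinate representation appears at most once and the bound $\|\hat{R_{I'}}(\pi)\|\le K\vp$ is uniform.
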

   Let $\pi_k(u)=u_k$ for any $u\in G$. Note $\hat T(\pi_k)=I-\hat R(\pi_k)$.
   Thus, if (say) $K\vp<1/2$ then $\|\hat T(\pi_k)-I\|\le 1/2$
  and hence $\|(\hat T(\pi_k))^{-1}\|\le  2$.
  Since the set $\Lambda=\{ \pi_k\}$ is Sidon with constant $=1$,  
  the argument in Remark \ref{phr5} shows:
    \begin{cor} For any $\vp<(2K)^{-1}$ there is a measure
    $\mu\in M(G)$ with $\|\mu\|_{M(G)}\le 2 w(\vp)$
    such that $\hat \mu(\pi_k)=I$ for all $k$
    and $\sup\n_{\pi\not\in \{\pi_k\}} \|\hat {\mu}(\pi)\|\le 2\vp$.
    
    \end{cor}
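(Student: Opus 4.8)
The plan is to read off the Fourier data of the decomposition supplied by the preceding Theorem and then invert its "diagonal part" on the set $\{\pi_k\}$ by Hahn--Banach, exactly along the lines of Remark~\ref{phr5}. First I would record the Fourier transform of $S=\sum_{k\in I} d_k\tr(u_k)=\sum_k d_k\chi_{\pi_k}$. Since the $\pi_k$ live on distinct coordinates of $G=\prod_{k\in I}U(d_k)$, Schur orthogonality (orthonormality of the characters) gives $\hat S(\pi_k)=I$ for every $k$ and $\hat S(\rho)=0$ for every irrep $\rho\notin\{\pi_k\}$. Writing $S=T+R$ as in the Theorem, with $\|T\|_{M(G)}\le w(\vp)$ and $\|R\|_*=\sup_\rho\|\hat R(\rho)\|\le K\vp$ (in the finite case $T\in L_1(G)$; in the infinite case $T\in M(G)$ and $R$ is only a formal Fourier series, but in both cases each $\hat R(\rho)$ and $\hat T(\rho)$ is a genuine matrix obtained coefficientwise), we obtain $\hat T(\pi_k)=I-\hat R(\pi_k)$ for all $k$ and $\hat T(\rho)=-\hat R(\rho)$ for $\rho\notin\{\pi_k\}$.

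Next, the hypothesis $\vp<(2K)^{-1}$, i.e. $K\vp<1/2$, forces $\hat T(\pi_k)$ to be invertible with $\|\hat T(\pi_k)^{-1}\|\le (1-K\vp)^{-1}\le 2$, uniformly in $k$. The one-point set $\{\pi_k\}$ is Sidon with constant $1$ over the single factor $U(d_k)$ (because $\sup_{u\in U(d_k)}|\tr(ua)|=\tr|a|$ for any $a\in M_{d_k}$), hence $\Lambda=\{\pi_k\mid k\in I\}$ is Sidon with constant $1$ in $\hat G$. Applying Lemma~\ref{l3}(i) to the bounded family $z_{\pi_k}=\hat T(\pi_k)^{-1}$ then produces $\nu\in M(G)$ with $\|\nu\|_{M(G)}\le \sup_k\|\hat T(\pi_k)^{-1}\|\le 2$ and $\hat\nu(\pi_k)=\hat T(\pi_k)^{-1}$ for all $k$.

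Finally I would set $\mu=\nu\ast T$. Then $\mu\in M(G)$ with $\|\mu\|_{M(G)}\le\|\nu\|_{M(G)}\|T\|_{M(G)}\le 2w(\vp)$; by \eqref{phr}, $\hat\mu(\pi_k)=\hat\nu(\pi_k)\hat T(\pi_k)=I$ for every $k$; and for $\rho\notin\{\pi_k\}$ we get $\|\hat\mu(\rho)\|=\|\hat\nu(\rho)\hat T(\rho)\|\le\|\nu\|_{M(G)}\,\|\hat R(\rho)\|\le 2K\vp$, which sharpens to $2\vp$ if one uses the Grothendieck-constant-free bound $\|R\|_*\le\vp$ noted in the text. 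I do not expect a real obstacle here once the Theorem is granted: the only points that need a little care are the Fourier bookkeeping when $I$ is infinite (where $R$ is a pseudomeasure, so the computation is done one irrep at a time) and the verification that the Sidon constant of $\{\pi_k\}$ equals $1$, so that the Hahn--Banach step costs nothing beyond the factor $2$ coming from inverting $\hat T(\pi_k)$.
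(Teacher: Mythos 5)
Your proof is correct and is essentially the paper's own argument: both read off $\hat S(\pi_k)=I$, $\hat S(\rho)=0$ for $\rho\notin\{\pi_k\}$, use Lemma~\ref{l3}(i) with Sidon constant $1$ to build $\nu$ with $\hat\nu(\pi_k)=(\hat T(\pi_k))^{-1}$ and $\|\nu\|\le 2$, and set $\mu=\nu\ast T$, exactly as sketched in Remark~\ref{phr5}. Your parenthetical observation is also accurate — following the Theorem's stated bound $\|R\|_*\le K\vp$ literally gives $\sup_{\rho\notin\{\pi_k\}}\|\hat\mu(\rho)\|\le 2K\vp$, and the $2\vp$ of the Corollary is obtained by invoking the $K$-free bound $\|r\|_{\gamma_2^*}\le\vp$ mentioned just before the Theorem.
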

   The preceding proof yields  the estimate   $w(\vp)=O(\log(1/\vp))$
   that does not seem accessible by Rider's original approach.
   This logarithmic bound follows from
   \cite[Lemma 3]{Me}. See \cite[Remark 1.13]{Pi3} for a detailed deduction.

 \end{document}